\documentclass[11pt,twoside,a4paper]{article}

\usepackage{amssymb,amsmath,amsthm,amsfonts,mathtools,mathrsfs,esint}
\usepackage{times}
\usepackage{enumerate,enumitem}
\usepackage{cite,titletoc}
\usepackage{graphicx,float}
\usepackage{booktabs,tabularx,array}
\usepackage{aliascnt}
\usepackage{microtype}
\usepackage[colorlinks=true,linkcolor=blue,citecolor=red,urlcolor=cyan]{hyperref}
\usepackage[nameinlink,capitalize,noabbrev]{cleveref}

\allowdisplaybreaks
\numberwithin{equation}{section}
\setlist{itemsep=2pt,topsep=4pt,parsep=0pt,partopsep=0pt}

\newcommand{\R}{\mathbb{R}}

\newcommand{\cA}{\mathcal{A}}

\newcommand{\BMO}{\mathrm{BMO}}

\newcommand{\avgint}{\mathop{\fint}}
\newcommand{\norm}[1]{\lVert #1\rVert}

\newcommand{\charac}{\mathbf{1}}

\newcommand{\twA}{\mathsf{A}}

\newcommand{\pairing}[2]{\langle #1,#2\rangle}

\theoremstyle{plain}
\newtheorem{theorem}{Theorem}[section]

\newaliascnt{proposition}{theorem}
\newtheorem{proposition}[proposition]{Proposition}
\aliascntresetthe{proposition}

\newaliascnt{lemma}{theorem}
\newtheorem{lemma}[lemma]{Lemma}
\aliascntresetthe{lemma}

\newaliascnt{corollary}{theorem}
\newtheorem{corollary}[corollary]{Corollary}
\aliascntresetthe{corollary}

\theoremstyle{definition}
\newaliascnt{definition}{theorem}
\newtheorem{definition}[definition]{Definition}
\aliascntresetthe{definition}

\newaliascnt{example}{theorem}
\newtheorem{example}[example]{Example}
\aliascntresetthe{example}

\newaliascnt{question}{theorem}
\newtheorem{question}[question]{Question}
\aliascntresetthe{question}

\theoremstyle{remark}
\newaliascnt{remark}{theorem}
\newtheorem{remark}[remark]{Remark}
\aliascntresetthe{remark}

\crefname{theorem}{Theorem}{Theorems}
\crefname{proposition}{Proposition}{Propositions}
\crefname{lemma}{Lemma}{Lemmas}
\crefname{corollary}{Corollary}{Corollaries}
\crefname{definition}{Definition}{Definitions}
\crefname{example}{Example}{Examples}
\crefname{question}{Question}{Questions}
\crefname{remark}{Remark}{Remarks}

\title{Properties and applications of Lorentz--Muckenhoupt classes}

\author{Dinghuai Wang\footnote{Wang Dinghuai (\texttt{Wangdh1990@126.com}) is supported by NSFC (No.~12101010). } \quad and \quad Qian Zhang
    \vspace{0.5cm}\\
\small
School of Mathematics and Statistics, Anhui Normal University, Wuhu 241002, China.\\
}
\vspace{0.5cm}
\date{}

\begin{document}

\maketitle
\thispagestyle{empty}

\begin{abstract}
In this paper, through the introduction of Lorentz--Muckenhoupt classes, we systematically investigate the boundedness of maximal operators on
multiplier weighted Lorentz spaces.
As applications, we give the characterization of the commutators of fractional integrals, which yields a partial answer to an open question proposed by D. Cruz-Uribe.
Second, Hardy inequalities in Lorentz spaces are established, with the critical case $p=d$, in which the classical Hardy inequality fails.
Finally, we apply the Lorentz estimates to fractional Schr\"odinger equations with singular potentials.
\vskip 0.2 true cm

\noindent\textbf{Keywords.} Lorentz space; Muckenhoupt weight;
multiplier-weighted space; maximal operator;
fractional Schr\"odinger equation

\vskip 0.2 true cm

\noindent\textbf{2020 Mathematical Subject Classification.}
Primary 42B25; Secondary 42B20, 46E30
\end{abstract}


\section{Introduction}
For $1<p<\infty$, the Muckenhoupt classes $A_p$ is defined by
$$[W]_{A_{p}}:=\sup_{Q}\avgint_Q W(y)dy \Big(\avgint_Q W(y)^{1-p'}dy\Big)^{p-1}<\infty.$$
Muckenhoupt's theorem gives a complete characterization of the weights for which the Hardy--Littlewood maximal operator
 $$
 M f(x):=\sup_{Q\ni x}\avgint_Q|f(y)|\,dy
 $$
is bounded on weighted Lebesgue spaces:
$$
M: L^p(W)\to L^p(W)
\quad\text{if and only if}\quad
W\in A_p,
\qquad 1<p<\infty.
$$
Meanwhile, for
\begin{equation}\label{eq:SC}
 0\le\alpha<d,\qquad 1<p<\infty,
 \qquad \frac1q=\frac1p-\frac\alpha d>0,
\end{equation}
the $A_{p,q}$ weight condition characterizes the one-weight boundedness of the fractional maximal operator $M_\alpha$ from $L^p(w^p)$ to $L^q(w^q)$.
Here
$$[w]_{A_{p,q}}:=\sup_{Q}\Big(\avgint_Q w(y)^qdy\Big)^{1/q} \Big(\avgint_Q w(y)^{-p'}dy\Big)^{1/p'}<\infty.$$
and
 $$
 M_\alpha f(x)=\sup_{Q\ni x}|Q|^{\alpha/d}\frac1{|Q|}\int_Q|f(y)|\,dy.
 $$
The Muckenhoupt classes $A_p$ and $A_{p,q}$ play a fundamental role in harmonic analysis. They satisfy factorization
properties, reverse H\"older inequalities, extrapolation theory,
and sharp operator estimates; see~\cite{Muckenhoupt1972,Jones1980,
HarboureMaciasSegovia1988,CruzUribeMartellPerez2011}. We refer to \cite{MuckenhouptWheeden1971,MuckenhouptWheeden1974,
LaceyMoenPerezTorres2010} for the classical and sharp quantitative theory.
Sparse domination and positive dyadic methods provide a modern framework for
these estimates; see
\cite{Lerner2013,CondeAlonsoRey2016,HanninenHytoneenLi2016,
DiPlinioHytoneenLi2020,Li2017TwoWeight,Li2018Sparse,LiMoenSun2015}.

\medskip

There are two different ways to introduce a weight into a Lorentz space.
The traditional measure-weighted space $L^{p,r}(W\,dx)$ uses the distribution
function
 $$
 \lambda\longmapsto W(\{x:|f(x)|>\lambda\})
 $$
and rearranges $f$ with respect to the measure $W(x)\,dx$.  The maximal
operator in this setting was studied by Chung, Hunt, and Kurtz
\cite{ChungHuntKurtz1982}; further weighted Lorentz and restricted weak-type
results appear in
\cite{Sawyer1990,AccomazzoEtAl2023,KokilashviliMeskhi2020}.

The present paper considers a different setting, in which the weight acts as a multiplier.
\begin{equation*}
 L_w^{p,r}:=\{f:fw\in L^{p,r}(\mathbb R^d)\},
 \qquad
 \|f\|_{L_w^{p,r}}:=\|fw\|_{L^{p,r}(dx)}.
\end{equation*}
If $r=p$, one has
$L_{W^{1/p}}^{p,p}=L^p(W\,dx)$. For $r\ne p$, the spaces
$L^{p,r}_{W^{1/p}}$ and $L^{p,r}(W\,dx)$ are generally different; see \cref{ex:TCE}.
Multiplier weak-type estimates go back to Muckenhoupt and Wheeden and have
recently reappeared in connection with matrix weights, sparse operators, and
quantitative weak bounds; see \cite{Sweeting2024}.  Their endpoint classes
contain critical multipliers outside the classical $A_p$ and $A_{p,q}$
classes.  This motivates the study of multiplier classes defined by
normalized Lorentz norms.

\medskip

We now introduce the Lorentz--Muckenhoupt classes studied in this paper and
summarize the corresponding maximal operator results.  For
$1<p\le q<\infty$ and $1\le r,s\le\infty$, define
\begin{equation*}
 [w]_{A_{(p,r),(q,s)}}
 :=\sup_Q
 \|w\|_{L^{q,s}(Q)}
 \|w^{-1}\|_{L^{p',r'}(Q)},
\end{equation*}
where
 $$
 \|h\|_{L^{a,b}(Q)}:=|Q|^{-1/a}\|h\mathbf1_Q\|_{L^{a,b}}.
 $$
The principal purpose of the paper is to study $A_{(p,r),(q,s)}$.  We prove
duality, monotonicity and a complete classification
of power weights.  The two-weight formulation in
\cref{thm:TWT,cor:TWT} is used only to compute
the exact norm of the fractional averaging operators and to derive the
necessary condition appearing in the table below.  In particular,
in the case $r\ge p$ and $s\le q$, the class coincides with $A_{p,q}$;
see \cref{thm:AA}.

The class $A_{(p,r),(q,s)}$ is always necessary for the boundedness of maximal operator, but it is not sufficient for every pair $(r,s)$.
The following table summarizes
the results proved in this paper.  For $p>1$, the indices satisfy
\eqref{eq:SC}.  When $p=1$, write
$q=d/(d-\alpha)$ for $\alpha>0$ and $q=1$ for $\alpha=0$.

\begin{center}
\footnotesize
\renewcommand{\arraystretch}{1.30}
\begin{tabularx}{\textwidth}{>{\raggedright\arraybackslash}p{0.095\textwidth}
 >{\raggedright\arraybackslash}p{0.19\textwidth}
 >{\raggedright\arraybackslash}X
 >{\raggedright\arraybackslash}p{0.19\textwidth}}
\toprule
principal range & secondary indices &
$M_\alpha:L_w^{p,r}\to L_w^{q,s}$ & conclusion\\
\midrule
$p>1$ & $r>s$ & the estimate fails even for $w\equiv1$ &
\cref{ex:r-greater-s}\\
$p>1$ & $p\le r\le s\le q$ & if and only if $w\in A_{(p,r),(q,s)}$
, equivalently $w\in A_{p,q}$ &
\cref{thm:SSM}\\
$p>1$ & $r=p$, $q\le s\le\infty$ & if and only if
$w\in A_{(p,p),(q,s)}$ &
\cref{thm:COC,thm:MQO}\\
$p>1$ & $r=1$, $s<\infty$ & $w\in A_{(p,1),(q,s)}$ is necessary but not
sufficient & \cref{ex:WIF}\\
$p>1$ & $r=1$, $s=\infty$ & $w\in A_{(p,1),(q,\infty)}$ is necessary;
$w\in A_{(p,p),(q,\infty)}$ is sufficient; a complete characterization remains open &
\cref{rem:PRS}\\
$p>1$ & $1<r<p$, $r\le s$ & $w\in A_{(p,r),(q,s)}$ is necessary; $w\in A_{(p,p),(q,s)}$ is sufficient;
a complete characterization remains open & \cref{rem:II}\\
$p>1$ & $p<r\le s$, $s>q$ & $w\in A_{(p,r),(q,s)}$ is necessary;
a complete characterization remains open
 & \cref{rem:LILO}\\
$p=1$ & $r=1$, $s<\infty$ & the estimate fails even for $w\equiv1$ &
\cref{prop:P1O}\\
$p=1$ & $r=1$, $s=\infty$ & $w\in A_{(1,1),(q,\infty)}$ is necessary;
$w\in A_{(1,1),(q,q)}$ is sufficient;
a complete characterization remains open (Muckenhoupt--Wheeden problem)&
\cref{prop:P1E}\\
$p=1$ & $r>1$ & the estimate fails even for $w\equiv1$ &
\cref{ex:P1LI}\\
\bottomrule
\end{tabularx}
\end{center}

Beyond the ranges $p\leq r\leq s\leq q$ and $r=p$ with $q\leq s\leq\infty$, our main positive result is the following characterization: \begin{equation}\label{eq:MM}
M_\alpha:L_w^{p,r}\longrightarrow L_w^{q,s} \quad\Longleftrightarrow\quad w\in A_{(p,p),(q,s)}.
\end{equation}
When $\alpha=0$, \eqref{eq:MM} yields
 $$
 M:L_w^{p,p}\longrightarrow L_w^{p,s}
 \quad\Longleftrightarrow\quad
 w\in A_{(p,p),(p,s)},
 \qquad p\le s\le\infty.
 $$

\medskip

We next describe the applications that motivate the preceding theory.

\medskip
\noindent\textbf{1. Partial answer to an open question of D.~Cruz-Uribe.}
For a locally integrable function $b$, let
 $$
 [b,I_\alpha]f:=bI_\alpha f-I_\alpha(bf)
 $$
be the commutator of $I_\alpha$, where
$$I_{\alpha}(f)(x):=\int_{\mathbb{R}^d}\frac{f(y)}{|x-y|^{d-\alpha}}dy.$$
Classical commutator characterizations of $\BMO$ were established in
\cite{CoifmanRochbergWeiss1976,Janson1978,Chanillo1982}.  In the two-weight
setting, however, a single pair of weights naturally leads to a
Bloom-type weighted $\BMO$ space; see
\cite{Bloom1985,HolmesRahmSpencer2016,HolmesLaceyWick2017}.  After discussing
sufficient two-weight conditions for fractional commutators, D.~Cruz-Uribe
posed the following question at the end of the paper \cite{CruzUribe2017}, and
pointed out that ``nothing is known about this question but it merits further investigation".

\begin{question}\label{ques:cruz-uribe}
Can anything be said about $b$ if there exists a pair of weights $(u, \sigma)$ (or
perhaps a family of such pairs) such that $[b,I_{\alpha}](\cdot \sigma): L^{p}(\sigma) \rightarrow L^{q}(u)$?
\end{question}

When $q$ is determined by
$$
\frac{1}{q}
=
\frac{1}{p}-\frac{\alpha}{d}
=:\frac{1}{p_{\alpha}},
$$
the result is relatively straightforward; see
\cite[Corollary 2.4]{CR2018}. The main difficulty is the case
$
p\leq q<p_{\alpha}.
$
Without loss of generality, it suffices to consider the case $q=p$.

Our first application gives a partial answer to \cref{ques:cruz-uribe}.
We prove that if
 $$
 \norm{[b,I_\alpha]f}_{L^p((wv)^p)}
 \le C_0\norm{v}_{L^{t,\infty}}
 \norm{f}_{L^p(w^p)}
 $$
for every nonnegative $v\in L^{t,\infty}$ and some $w\in A_{p,p_\alpha}$, then $b\in\BMO$.  The proof uses Lorentz duality and
 $$
 [b,I_\alpha]:
 L_w^{p,p}\longrightarrow L_w^{p_\alpha,p},
 $$
for $w\in A_{p,p_{\alpha}}$, see \cref{thm:one-weight-BMO-characterization}.

\medskip
\noindent\textbf{2. The Hardy inequality at the critical exponent.}
The classical inequality
 $$
 \Big\|\frac{u}{|x|}\Big\|_{L^p}
 \lesssim\|\nabla u\|_{L^p},\qquad 1<p<d,
 $$
fails at $p=d$.  \Cref{thm:critical-hardy-lorentz} gives the endpoint
replacement
\begin{equation*}
 \big\||x|^{-d/q}u\big\|_{L^{q,\infty}}
 \lesssim\|\nabla u\|_{L^{d,1}},
 \qquad d\le q<\infty.
\end{equation*}
In particular,
 $$
 \Big\|\frac{u}{|x|}\Big\|_{L^{d,\infty}}
 \lesssim\|\nabla u\|_{L^{d,1}}.
 $$
The estimate fails with
$L^{d,r}$ on the right for every $r>1$.  Related critical Hardy inequalities
are discussed in \cite{RuzhanskySuragan2017}.

\medskip
\noindent\textbf{3. Fractional Schr\"odinger equations.}
The weighted Hardy--Lorentz estimates lead to a singular equation in which
the two multipliers have distinct analytic roles.  Let
 $$
 \delta=\alpha-\beta-\gamma>0,
 \qquad t=\frac d\delta,
 \qquad A(x)=|x|^{\beta+\gamma}a(x).
 $$
If $A\in L^{t,\infty}$ and its norm is sufficiently small, then
\cref{thm:hardy-schrodinger} gives a unique mild solution of
 $$
 (-\Delta)^{\alpha/2}u-a(x)u=f
 $$
in
$X_\beta^{q,\rho}=\{u:|x|^{-\beta}u\in L^{q,\rho}\}$, together with
\begin{equation*}
 \||x|^{-\beta}u\|_{L^{q,\rho}}
 \le
 \frac{C}{1-C\|A\|_{L^{t,\infty}}}
 \||x|^\gamma f\|_{L^{p,\rho}}.
\end{equation*}
The critical Hardy potential $a(x)=\lambda|x|^{-\alpha}$ is included for
sufficiently small $|\lambda|$.  Related fractional Schr\"odinger operators
with Hardy-type potentials are studied in
\cite{FrankLiebSeiringer2008,Bieganowski2018}.

The structure of the paper is as follows.  Section~2
contains the Lorentz-space preliminaries and compares the two
weighted setting.  Section~3 develops the Lorentz--Muckenhoupt classes and
their structural properties.  Section~4
contains the maximal operator theorems and counterexamples.  Section~5 gives the commutator theorem, the
critical Hardy inequalities, and the fractional Schr\"odinger equationn.
\section{Preliminaries}

\subsection{Lorentz spaces and weighted Lorentz spaces}

For a measurable function $f$ on $\R^d$, let $f^*$ denote its decreasing
rearrangement.  We follow the standard Lorentz-space in
\cite{Lorentz1950,Hunt1966,BerghLofstrom1976,BennettSharpley1988}.  If $0<a<\infty$ and $0<b<\infty$, we use the normalized
Lorentz quasi-norm
\begin{equation*}
 \norm{f}_{L^{a,b}}
 :=\Big(\frac ba\int_0^\infty
 \big[t^{1/a}f^*(t)\big]^b\,\frac{dt}{t}\Big)^{1/b},
\end{equation*}
and
\begin{equation*}
 \norm{f}_{L^{a,\infty}}
 :=\sup_{t>0}t^{1/a}f^*(t).
\end{equation*}

\medskip

We use the following standard facts; see Bennett and Sharpley
\cite{BennettSharpley1988}.

\begin{lemma}\label{lem:L}
Let $1<a<\infty$.
\begin{enumerate}[label=\textup{(\roman*)}]
\item If $0<b_0\le b_1\le\infty$, then
 $$
 L^{a,b_0}\hookrightarrow L^{a,b_1}.
 $$
\item If $1\le b\le\infty$, then
\begin{equation*}
 \int_{\R^d}|fg|
 \le C_{a,b}\norm{f}_{L^{a,b}}
 \norm{g}_{L^{a',b'}}.
\end{equation*}
\item If $E$ has finite measure, $0<a_0<a_1\le\infty$, and
$0<b_0,b_1\le\infty$, then
\begin{equation*}
 |E|^{-1/a_0}\norm{f\charac_E}_{L^{a_0,b_0}}
 \lesssim
 |E|^{-1/a_1}\norm{f\charac_E}_{L^{a_1,b_1}}.
\end{equation*}
\item Let $1<a_0,a_1<\infty$, $1\le b_0,b_1,b\le\infty$, and assume
 $$
 \frac1a=\frac1{a_0}+\frac1{a_1}<1,
 \qquad
 \frac1b\le\frac1{b_0}+\frac1{b_1}.
 $$
Then O'Neil's product inequality gives
\begin{equation*}
 \norm{fg}_{L^{a,b}}
 \lesssim
 \norm{f}_{L^{a_0,b_0}}\norm{g}_{L^{a_1,b_1}}.
\end{equation*}
\end{enumerate}
\end{lemma}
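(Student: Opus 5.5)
Since \cref{lem:L} collects standard facts from Bennett--Sharpley, the plan is to reduce each item to elementary inequalities for the decreasing rearrangement $f^*$ and the maximal function $f^{**}(t)=t^{-1}\int_0^t f^*$, with the normalization $\frac ba$ tracked throughout.

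For (i), I would first prove the endpoint embedding $L^{a,b_0}\hookrightarrow L^{a,\infty}$. Because $f^*$ is decreasing, for every $t>0$
$$
[t^{1/a}f^*(t)]^{b_0}\le f^*(t)^{b_0}\,\frac{b_0}{a}\int_0^t s^{b_0/a}\,\frac{ds}{s}\le \frac{b_0}{a}\int_0^t [s^{1/a}f^*(s)]^{b_0}\,\frac{ds}{s},
$$
so $\|f\|_{L^{a,\infty}}\le\|f\|_{L^{a,b_0}}$. For $b_0<b_1<\infty$ I would write $[t^{1/a}f^*]^{b_1}=[t^{1/a}f^*]^{b_1-b_0}[t^{1/a}f^*]^{b_0}$, bound the first factor by $\|f\|_{L^{a,\infty}}^{b_1-b_0}$, and then apply the endpoint bound. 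The constants $\frac{b_0}{a}$ and $\frac{b_1}{a}$ differ only by $b_1/b_0$, so this gives the embedding with constant $(b_1/b_0)^{1/b_1}$, which is all the statement requires.

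For (ii), I would start from the Hardy--Littlewood inequality $\int|fg|\le\int_0^\infty f^*g^*\,dt$. Writing $f^*g^*=[t^{1/a}f^*][t^{1/a'}g^*]$ and applying Hölder in $L^b(dt/t)$, $L^{b'}(dt/t)$ gives the bound up to the normalization constants, including the cases $b=1$ and $b=\infty$.

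For (iii), I would set $h=f\charac_E$, so that $h^*(t)=0$ for $t>|E|$. If $b_1=\infty$, use $h^*(t)\le t^{-1/a_1}\|h\|_{L^{a_1,\infty}}$ and integrate $t^{b_0(1/a_0-1/a_1)-1}$ over $(0,|E|)$; the integral converges because $a_0<a_1$ and yields $|E|^{1/a_0-1/a_1}$. A general $b_1$ then reduces to this case by (i), or by a direct argument for $b_1<a_1$ such as $a_1=\infty$. The case $a_1=\infty$ is trivial from $h^*\le\|h\|_\infty$.

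For (iv), I would use O'Neil's rearrangement bound $(fg)^*(t)\le f^*(t/2)g^*(t/2)$ and write $t^{1/a}=t^{1/a_0}t^{1/a_1}$. Hölder in $dt/t$ with exponents $b_0/b$ and $b_1/b$, valid when $\frac1b=\frac1{b_0}+\frac1{b_1}$, gives the bound. If $\frac1b<\frac1{b_0}+\frac1{b_1}$, I would first prove the estimate for the $\tilde b$ with equality and then embed using (i).

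The main obstacle is bookkeeping the normalizations and the endpoint cases $b=\infty$ and $b_1<a_1$ in (iii). The latter I would handle by citing (i) when $b_1\ge b_0$, and otherwise by Hölder in $dt/t$ on $(0,|E|)$, which works because $a_0<a_1$.
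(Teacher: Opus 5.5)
Your proposal is correct. The paper gives no proof of \cref{lem:L}; it only cites these as standard facts from Bennett--Sharpley. Your plan is a self-contained version of the usual rearrangement proofs:
\begin{itemize}
\item for (i), monotonicity of $f^*$ gives the endpoint bound $\norm{f}_{L^{a,\infty}}\le\norm{f}_{L^{a,b_0}}$, and you then split the power $b_1=(b_1-b_0)+b_0$;
\item for (ii), you combine the Hardy--Littlewood inequality with H\"older in $L^b(dt/t)\times L^{b'}(dt/t)$;
\item for (iii), you integrate $h^*(t)\le t^{-1/a_1}\norm{h}_{L^{a_1,\infty}}$ over $(0,|E|)$;
\item for (iv), you use $(fg)^*(t)\le f^*(t/2)g^*(t/2)$ with H\"older in $dt/t$, then (i) when $\frac1b<\frac1{b_0}+\frac1{b_1}$.
\end{itemize}
Compared with the citation, your version also produces explicit constants under the paper's normalization $\frac ba$.

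Two small clean-ups concern (iii). First, the case split in your closing paragraph is unnecessary. The embedding $L^{a_1,b_1}\hookrightarrow L^{a_1,\infty}$ holds for every $b_1$, so the $b_1=\infty$ computation already gives (iii) for all $b_1$. How $b_1$ compares with $b_0$ or with $a_1$ plays no role, and no separate H\"older argument is needed. Second, in (iii) the index $a_1$ may be $\le 1$, which lies outside the range $1<a<\infty$ in the statement of (i). You should therefore note that your proof of (i) works verbatim for every $0<a<\infty$, because that is the form of (i) you invoke there.
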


For $1<a<\infty$ and $1\le b\le\infty$, the associate space of
$L^{a,b}$ is $L^{a',b'}$, with equivalence of norms.  In particular,
\begin{equation*}
 \norm{h}_{L^{a',b'}}
 \approx
 \sup_{\norm{g}_{L^{a,b}}\le1}\int_{\R^d}|gh|.
\end{equation*}
If $b=1$ or $b=\infty$, this is understood in the K\"othe-dual sense.

\vspace{0.3cm}

We next distinguish the two weighted Lorentz constructions that occur in the literature. Let $1<a<\infty$ and $1\le b\le\infty$.  Multiplier and factorization
questions for Lorentz and rearrangement-invariant spaces are discussed in
\cite{Reisner1981,KolwiczLesnikMaligranda2014}.

The following example shows that the space $L^{a,b}(\omega)$ is different from $ L_{w^{1/a}}^{a,b}$, when $a\neq b$.

\begin{example}\label{ex:TCE}
For $\Omega=(0,e^{-1})$, take $a=2$, $b=1$, and set
 $$
 \omega_1(x)=\frac{1}{x\log^2(e/x)}.
 $$
Since $\omega_1(\Omega)<\infty$, the constant function $1$ belongs to both
$L^{2,1}(\omega_1)$ and $L^2(\omega_1)$.  On the other hand,
 $$
 \omega_1(x)^{1/2}
 =x^{-1/2}\log^{-1}(e/x),
 $$
and therefore
 $$
\norm{\omega_1^{1/2}}_{L^{2,1}}
 \gtrsim\int_0^{e^{-1}}\frac{dt}{t\log(e/t)}=\infty.
 $$
Hence $L^{2,1}(\omega_1)$ is not contained in
$L^{2,1}_{\omega_1^{1/2}}$.

Conversely, on $\Omega=(0,1)$ let $a=2$, $b=\infty$, and
$\omega_2(x)=x^{-1}$.  Then
 $$
 \norm{1}_{L_{\omega_2^{1/2}}^{2,\infty}}
 =\norm{x^{-1/2}}_{L^{2,\infty}(0,1)}<\infty.
 $$
However, $\omega_2(\Omega)=\infty$ and
 $
 1\notin L^{2,\infty}(\omega_2).
 $
\end{example}

\vspace{0.3cm}

The associate space of the multiplier-weighted
Lorentz space is
\begin{equation*}
 (L_w^{a,b})'=L_{w^{-1}}^{a',b'}
\end{equation*}
with equivalence of norms.

We also use the real interpolation identity
\begin{equation}\label{eq:WI}
 (L_w^{a_0},L_w^{a_1})_{\theta,b}=L_w^{a,b},
 \qquad
 \frac1a=\frac{1-\theta}{a_0}+\frac\theta{a_1},
\end{equation}
for $1\le a_0<a_1\le\infty$, $0<\theta<1$, and
$1\le b\le\infty$.  This follows from the unweighted identity.

\subsection{BMO and off-diagonal extrapolation}

For a locally integrable function $b$, write
$\langle b\rangle_Q=|Q|^{-1}\int_Qb$.  The space $\BMO(\R^d)$ consists of
all $b$ such that
\begin{equation*}
 \norm{b}_{\BMO}
 :=\sup_Q\avgint_Q|b-\langle b\rangle_Q|<\infty.
\end{equation*}
We refer to John and Nirenberg \cite{JohnNirenberg1961} and Fefferman and
Stein \cite{FeffermanStein1972} for the classical theory.

We use classical off-diagonal extrapolation in the following form; see
Harboure, Mac\'ias, and Segovia \cite{HarboureMaciasSegovia1988} and
Cruz-Uribe, Martell, and P\'erez \cite{CruzUribeMartellPerez2011}.

\begin{theorem}\label{thm:COF}
Let $1<p_0\le q_0<\infty$ and set
$\delta_0=1/p_0-1/q_0\ge0$.  Suppose that $T$ is sublinear and that there is
an increasing function $\Psi$ such that
\begin{equation}\label{eq:base-weighted-bound}
 \norm{Tf}_{L_w^{q_0}}
 \le \Psi([w]_{A_{p_0,q_0}})\norm{f}_{L_w^{p_0}}
\end{equation}
for every $w\in A_{p_0,q_0}$ and every bounded compactly supported $f$.
Then, for every $1<p\le q<\infty$ satisfying
$1/p-1/q=\delta_0$ and every $w\in A_{p,q}$,
\begin{equation*}
 \norm{Tf}_{L_w^q}\le C\norm{f}_{L_w^p},
\end{equation*}
where $C$ depends on the indices, $[w]_{A_{p,q}}$, and $\Psi$.
\end{theorem}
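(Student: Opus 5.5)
This is the off-diagonal extrapolation theorem of Harboure--Mac\'ias--Segovia. I would prove it by the Rubio de Francia iteration in its off-diagonal form.

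\textbf{Reduction to a diagonal-type statement.} Set $1/p-1/q=\delta_0=1/p_0-1/q_0$ and write $\gamma=1/q+1/p'=1-\delta_0$, which does not depend on $(p,q)$. One checks that $w\in A_{p,q}$ if and only if $w^{q}\in A_{1+q/p'}$, with comparable constants. Put $r=q/q_0$. Fix $w\in A_{p,q}$ and a bounded compactly supported $f$. By duality in $L^{r}$ if $r>1$, or a direct H\"older argument if $r<1$, it suffices to bound
\[
\int |Tf|^{q_0}\, h\, w^{q_0}
\]
for a suitable nonnegative $h$ with $\norm{h}_{L^{r'}(w^{q})}\le 1$.

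\textbf{Rubio de Francia algorithms.} Choose exponents adapted to the weights $w^q$ and $w^{-p'}$. Both are $A_\infty$ weights, and the maximal operators $M'_{u}(g)=M(g u)/u$ are bounded on the relevant $L^s$ spaces because $w\in A_{p,q}$. Build
\[
R h=\sum_{k\ge0}\frac{(M')^k h}{(2\norm{M'})^k},
\]
so that $h\le Rh$, $\norm{Rh}\le 2\norm{h}$, and $Rh$ is in $A_1$ after a power adjustment. In the same way, build $R f$ from $|f|w^{p}$-type data with the analogous three properties. Then define the new weight
\[
W^{q_0}=(Rh)\,(R f)^{\theta}\,(\text{power of } w),
\]
with exponents chosen so that, by the $A_1$ properties and the factorization ($A_1\cdot A_1^{1-s}\subset A_s$), $W\in A_{p_0,q_0}$ with $[W]_{A_{p_0,q_0}}$ controlled by $[w]_{A_{p,q}}$. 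Apply \eqref{eq:base-weighted-bound} with $W$. Then H\"older's inequality with exponents tied to $p/p_0$ and $q/q_0$, together with the norm bounds of $R$, returns $\norm{f}_{L^p_w}$.

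\textbf{Main obstacle.} The key difficulty is the bookkeeping of exponents, so that three things hold simultaneously: the pointwise domination $\int|Tf|^{q_0}hw^{q_0}\le\int|Tf|^{q_0}W^{q_0}$; the condition $W\in A_{p_0,q_0}$; and the H\"older step closing with $\norm{f}_{L_w^p}$. The invariance $1/p-1/q=1/p_0-1/q_0$ is exactly what makes the exponent in the H\"older step match. I would treat separately the cases $q>q_0$ and $q<q_0$, which correspond to $r>1$ and $r<1$. In the second case, use the alternative argument that builds $R$ from $f$ alone, as in Cruz-Uribe--Martell--P\'erez.

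Since this is a classical result cited from \cite{HarboureMaciasSegovia1988,CruzUribeMartellPerez2011}, in the paper itself I would simply refer to those sources rather than reproduce the details.
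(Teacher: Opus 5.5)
Your proposal agrees with the paper: the paper gives no proof of this theorem and simply quotes it from Harboure--Mac\'ias--Segovia and Cruz-Uribe--Martell--P\'erez, which is exactly what you end up doing.

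If you ever write your sketch out in full, drop the factor $(Rf)^\theta$ in the case $q>q_0$. The factorization forces $\theta<0$, and a negative power of $Rf$ rules out the pointwise domination you list. The classical argument instead builds from $h$ and $w$ alone a function $u\ge h$ with $\norm{u}_{L^{(q/q_0)'}(w^q)}\lesssim\norm{h}_{L^{(q/q_0)'}(w^q)}$ and $u\,w^q\in A_{1+q_0/p_0'}$, dualizing against $h\,w^q$ rather than $h\,w^{q_0}$. H\"older then closes exactly because $1/q_0-1/q=1/p_0-1/p$. For $q<q_0$ the weight is built from $f$ alone, as you say.
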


\section{Lorentz--Muckenhoupt classes}

We first consider a two-weight class because it computes the norm of fractional average operator and yields the necessary $\twA^\alpha_{(p,r),(q,s)}$ condition used later.

\begin{definition}
Let $0\le\alpha<d$, $1<p\le q<\infty$, and
$1\le r,s\le\infty$.  A pair of weights $(u,\sigma)$ belongs to
$\twA^\alpha_{(p,r),(q,s)}$ if
\begin{equation*}
 [(u,\sigma)]_{\twA^\alpha_{(p,r),(q,s)}}
 :=\sup_Q |Q|^{\frac\alpha d+\frac1q-\frac1p}
 \norm{u^{1/q}}_{L^{q,s}(Q)}
 \norm{\sigma^{1/p'}}_{L^{p',r'}(Q)}<\infty.
\end{equation*}
\end{definition}
We also obtain
\begin{equation*}
 [(u,\sigma)]_{\twA^\alpha_{(p,r),(q,s)}}
 =\sup_Q |Q|^{\alpha/d-1}
 \norm{u^{1/q}\charac_Q}_{L^{q,s}}
 \norm{\sigma^{1/p'}\charac_Q}_{L^{p',r'}}.
\end{equation*}

\medskip

For a cube $Q$, define
\begin{equation*}
 \cA_{Q,\sigma}^\alpha f(x)
 :=|Q|^{\alpha/d}\Big(\avgint_Qf\sigma\Big)\charac_Q(x).
\end{equation*}

\begin{theorem}\label{thm:TWT}
Let $0\le\alpha<d$, $1<p\le q<\infty$, $1\le r,s\le\infty$, and let
$(u,\sigma)$ be a pair of weights. Then
\begin{equation*}
 \sup_Q
 \norm{\cA_{Q,\sigma}^\alpha}_{L_{\sigma^{1/p}}^{p,r}\to L_{u^{1/q}}^{q,s}}
 \approx
 [(u,\sigma)]_{\twA^\alpha_{(p,r),(q,s)}}.
\end{equation*}
The constants depend only on $p,q,r$, and $s$.
\end{theorem}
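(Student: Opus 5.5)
The plan is to fix a cube $Q$ and compute the norm of the rank-one operator $\cA_{Q,\sigma}^\alpha$ directly. Everything reduces to the duality of \cref{lem:L}(ii) and its converse (the associate-space identity). Write $g=f\sigma^{1/p}$, so that $\norm{f}_{L^{p,r}_{\sigma^{1/p}}}=\norm{g}_{L^{p,r}}$ and
\begin{equation*}
 \int_Q f\sigma=\int_{\R^d} g\,\sigma^{1/p'}\charac_Q .
\end{equation*}
The output is $|Q|^{\alpha/d-1}\big(\int_Q f\sigma\big)\charac_Q$. Its $L^{q,s}_{u^{1/q}}$ norm is therefore
\begin{equation*}
 |Q|^{\alpha/d-1}\Big|\int g\,\sigma^{1/p'}\charac_Q\Big|\,\norm{u^{1/q}\charac_Q}_{L^{q,s}} .
\end{equation*}
So the operator factors as a scalar functional times a fixed function, and
\begin{equation*}
 \norm{\cA_{Q,\sigma}^\alpha}
 =|Q|^{\alpha/d-1}\norm{u^{1/q}\charac_Q}_{L^{q,s}}
 \sup_{\norm{g}_{L^{p,r}}\le1}\Big|\int g\,\sigma^{1/p'}\charac_Q\Big| .
\end{equation*}
The whole proof then rests on identifying this supremum with $\norm{\sigma^{1/p'}\charac_Q}_{L^{p',r'}}$ up to constants depending only on $p,r$.

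For the upper bound, \cref{lem:L}(ii) with $a=p$, $b=r$ gives a supremum at most $C_{p,r}\norm{\sigma^{1/p'}\charac_Q}_{L^{p',r'}}$. Combined with the factorization and the second form of $[(u,\sigma)]_{\twA^\alpha_{(p,r),(q,s)}}$ written after the definition, this yields $\lesssim$ uniformly in $Q$. Taking the supremum over $Q$ gives one direction.

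For the lower bound, I would invoke the associate-space equivalence
\begin{equation*}
 \norm{h}_{L^{p',r'}}\approx\sup_{\norm{g}_{L^{p,r}}\le1}\int|gh| ,
\end{equation*}
with $h=\sigma^{1/p'}\charac_Q\ge0$. Since $h\ge0$, I may take $g\ge0$, so $|\int gh|=\int |gh|$ and no cancellation arises. Then the supremum above is $\gtrsim\norm{\sigma^{1/p'}\charac_Q}_{L^{p',r'}}$.

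A subtle point is that $g$ need not be supported in $Q$. Replacing $g$ by $g\charac_Q$ only decreases the norm and preserves the integral, so this causes no trouble. The map $g\mapsto f=g\sigma^{-1/p}$ is a bijection wherever $\sigma>0$, and $h$ vanishes wherever $\sigma=0$, so those points contribute nothing.

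The step I expect to be the main obstacle is the endpoint cases $r=1$ and $r=\infty$, together with possible infinite norms. When $r=\infty$ we have $r'=1$, and the K\"othe-dual statement must still give the $L^{p',1}$ norm of $h$ as a supremum over $L^{p,\infty}$. When $r=1$ it must give the $L^{p',\infty}$ norm. I would rely on the paper's convention that for $1<p<\infty$ this holds with equivalence constants (via the Fatou property, testing on truncations $h\charac_{\{h\le N\}}$ with $N\to\infty$). This also handles the situation where $\norm{\sigma^{1/p'}\charac_Q}_{L^{p',r'}}=\infty$: the operator norm is then infinite unless $u^{1/q}\charac_Q=0$, consistent with the convention $0\cdot\infty=0$. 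In all cases the constants depend only on $p$ and $r$ (and trivially on $q,s$), as the statement of \cref{thm:TWT} claims.
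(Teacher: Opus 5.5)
Your proposal is correct and follows the paper's proof. The upper bound is the Lorentz H\"older inequality of \cref{lem:L}(ii) applied to $f\sigma=(f\sigma^{1/p})\sigma^{1/p'}$, and the lower bound tests the rank-one operator on $f=h\sigma^{-1/p}$ with $h\ge0$ supported in $Q$ and nearly extremal for the $L^{p,r}$--$L^{p',r'}$ (K\"othe) duality against $\sigma^{1/p'}\charac_Q$; your treatment of the endpoints $r\in\{1,\infty\}$, of the restriction to $Q$, and of infinite norms supplies details the paper leaves implicit.
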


\begin{proof}
Set
 $$
 \mathfrak T:=\sup_Q
 \norm{\cA_{Q,\sigma}^\alpha}_{L_{\sigma^{1/p}}^{p,r}\to L_{u^{1/q}}^{q,s}}.
 $$
We first prove $\mathfrak T\lesssim [(u,\sigma)]_{\twA^\alpha_{(p,r),(q,s)}}$. Fix a cube $Q$ and
$f\in L_{\sigma^{1/p}}^{p,r}$. Since
 $$
 f\sigma=(f\sigma^{1/p})\sigma^{1/p'},
 $$
then
\begin{equation*}
 \Big|\int_Qf\sigma\Big|
 \lesssim
 \norm{f\sigma^{1/p}\charac_Q}_{L^{p,r}}
 \norm{\sigma^{1/p'}\charac_Q}_{L^{p',r'}}.
\end{equation*}
Therefore
\begin{align*}
 \norm{\cA_{Q,\sigma}^\alpha f}_{L_{u^{1/q}}^{q,s}}
 &=|Q|^{\alpha/d-1}
 \Big|\int_Qf\sigma\Big|
 \norm{u^{1/q}\charac_Q}_{L^{q,s}} \notag\\
 &\lesssim |Q|^{\alpha/d-1}
 \norm{u^{1/q}\charac_Q}_{L^{q,s}}
 \norm{\sigma^{1/p'}\charac_Q}_{L^{p',r'}}
 \norm{f}_{L_{\sigma^{1/p}}^{p,r}} \notag\\
 &\le [(u,\sigma)]_{\twA^\alpha_{(p,r),(q,s)}}\norm{f}_{L_{\sigma^{1/p}}^{p,r}}.
\end{align*}
Taking the supremum over $Q$ yields $\mathfrak T\lesssim [(u,\sigma)]_{\twA^\alpha_{(p,r),(q,s)}}$.

For the converse, fix $Q$ and $0<\varepsilon<1$. There exists a nonnegative function $h_{Q,\varepsilon}$ supported
in $Q$ such that
\begin{equation*}
 \norm{h_{Q,\varepsilon}}_{L^{p,r}}\le1,
 \qquad
 \int_Qh_{Q,\varepsilon}\sigma^{1/p'}
 \ge c_{p,r}(1-\varepsilon)
 \norm{\sigma^{1/p'}\charac_Q}_{L^{p',r'}}.
\end{equation*}
Define
 $$
 f_{Q,\varepsilon}:=h_{Q,\varepsilon}\sigma^{-1/p}.
 $$
Then
\begin{equation}\label{eq:TWT-1}
 \norm{f_{Q,\varepsilon}}_{L_{\sigma^{1/p}}^{p,r}}
 =\norm{h_{Q,\varepsilon}}_{L^{p,r}}\le1,
\end{equation}
and
\begin{align*}
 \norm{\cA_{Q,\sigma}^\alpha f_{Q,\varepsilon}}_{L_{u^{1/q}}^{q,s}}
 &=|Q|^{\alpha/d-1}
 \Big(\int_Qh_{Q,\varepsilon}\sigma^{1/p'}\Big)
 \norm{u^{1/q}\charac_Q}_{L^{q,s}} \notag\\
 &\gtrsim (1-\varepsilon)|Q|^{\alpha/d-1}
 \norm{u^{1/q}\charac_Q}_{L^{q,s}}
 \norm{\sigma^{1/p'}\charac_Q}_{L^{p',r'}}.
\end{align*}
By \eqref{eq:TWT-1}, the left-hand side is at most
$\mathfrak T$. Letting $\varepsilon\downarrow0$ and then taking the
supremum over $Q$ gives $[(u,\sigma)]_{\twA^\alpha_{(p,r),(q,s)}} \lesssim\mathfrak T$.
\end{proof}

\begin{corollary}\label{cor:TWT}
Let $0\le\alpha<d$, $1<p\le q<\infty$, $1\le r,s\le\infty$, and let
$(u,\sigma)$ be a pair of weights. If
\begin{equation*}
 M_\alpha(\cdot\,\sigma):
 L_{\sigma^{1/p}}^{p,r}\longrightarrow L_{u^{1/q}}^{q,s}
\end{equation*}
is bounded, then $(u,\sigma)\in\twA^\alpha_{(p,r),(q,s)}$. If
$0<\alpha<d$, the same conclusion follows from the boundedness of
\begin{equation*}
 I_\alpha(\cdot\,\sigma):
 L_{\sigma^{1/p}}^{p,r}\longrightarrow L_{u^{1/q}}^{q,s}.
\end{equation*}
\end{corollary}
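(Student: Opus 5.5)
The plan is to reduce the corollary to \cref{thm:TWT} through a pointwise comparison. We show that each averaging operator $\cA_{Q,\sigma}^\alpha$ is dominated by $M_\alpha(\cdot\,\sigma)$, and, when $0<\alpha<d$, by $I_\alpha(\cdot\,\sigma)$, with constants independent of $Q$. The multiplier-weighted Lorentz quasi-norm $\norm{g}_{L_{u^{1/q}}^{q,s}}=\norm{gu^{1/q}}_{L^{q,s}}$ is a lattice quasi-norm: if $|g_1|\le C|g_2|$ pointwise, then $\norm{g_1}_{L^{q,s}_{u^{1/q}}}\le C\norm{g_2}_{L^{q,s}_{u^{1/q}}}$, because the decreasing rearrangement is monotone. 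So such a domination gives
$$
\sup_Q\norm{\cA^\alpha_{Q,\sigma}}_{L^{p,r}_{\sigma^{1/p}}\to L^{q,s}_{u^{1/q}}}
\lesssim \norm{T(\cdot\,\sigma)}_{L^{p,r}_{\sigma^{1/p}}\to L^{q,s}_{u^{1/q}}}
$$
for $T=M_\alpha$ or $T=I_\alpha$. \Cref{thm:TWT} then yields $[(u,\sigma)]_{\twA^\alpha_{(p,r),(q,s)}}<\infty$.

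The pointwise bounds are elementary. Since $\cA$ involves $\int_Q f\sigma$, which may be signed, I first note that $|\cA^\alpha_{Q,\sigma}f|\le \cA^\alpha_{Q,\sigma}|f|$ and that $\norm{|f|}=\norm{f}$ in $L^{p,r}_{\sigma^{1/p}}$. It therefore suffices to treat $f\ge0$; the converse construction in \cref{thm:TWT} already used nonnegative test functions.

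For the maximal operator, take $x\in Q$. Testing the supremum defining $M_\alpha(f\sigma)(x)$ on the cube $Q$ itself gives
$$
|Q|^{\alpha/d}\avgint_Q f\sigma\le M_\alpha(f\sigma)(x),
$$
that is, $\cA^\alpha_{Q,\sigma}f\le M_\alpha(f\sigma)$ everywhere.

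For the fractional integral with $0<\alpha<d$, take $x,y\in Q$. Then $|x-y|\le\sqrt d\,\ell(Q)$, so $|x-y|^{\alpha-d}\ge c_d|Q|^{(\alpha-d)/d}$. Hence
$$
I_\alpha(f\sigma)(x)\ge\int_Q\frac{f\sigma(y)}{|x-y|^{d-\alpha}}dy\ge c_d|Q|^{\alpha/d-1}\int_Qf\sigma=c_d\,\cA^\alpha_{Q,\sigma}f(x).
$$
Here positivity of the kernel and $f\ge0$ are used to discard the part of the integral outside $Q$.

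The only point requiring care is the reduction to nonnegative $f$ for $I_\alpha$, which is not a positive operator on signed data. I would handle it by noting that the lower bound in the proof of \cref{thm:TWT} only uses the nonnegative test functions $f_{Q,\varepsilon}=h_{Q,\varepsilon}\sigma^{-1/p}$. It is then enough to apply the domination to those functions, where the argument above is valid. All constants depend only on $d$, $\alpha$, $p$, $q$, $r$, $s$, which completes the plan.
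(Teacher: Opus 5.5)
Your proposal is correct and follows essentially the same route as the paper: dominate $\cA^\alpha_{Q,\sigma}f$ pointwise by $M_\alpha(f\sigma)$ (testing the supremum on $Q$), and for $0<\alpha<d$ by $c_{d,\alpha}^{-1}I_\alpha(f\sigma)$ for $f\ge0$ (using $|x-y|\le\sqrt d\,\ell(Q)$ on $Q$), then invoke \cref{thm:TWT}. Your extra care about signed data is harmless but not needed. Since $|\cA^\alpha_{Q,\sigma}f|\le\cA^\alpha_{Q,\sigma}|f|\le c_{d,\alpha}^{-1}I_\alpha(|f|\sigma)$ and $|f|$ has the same norm as $f$, the operator-norm comparison holds for all $f$.
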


\begin{proof}
Let $f\ge0$ and let $Q$ be a cube. For $x\in Q$,
\begin{equation*}
 \cA_{Q,\sigma}^\alpha f(x)
 =|Q|^{\alpha/d}\Big(\avgint_Qf\sigma\Big)\charac_Q(x)
 \le M_\alpha(f\sigma)(x).
\end{equation*}
Consequently,
 $$
 [(u,\sigma)]_{\twA^\alpha_{(p,r),(q,s)}}\lesssim \norm{\cA_{Q,\sigma}^\alpha}_{L_{\sigma^{1/p}}^{p,r}\to L_{u^{1/q}}^{q,s}}
 \le
 \norm{M_\alpha(\cdot\,\sigma)}_{L_{\sigma^{1/p}}^{p,r}\to L_{u^{1/q}}^{q,s}}.
 $$

For the fractional integral, if $x,y\in Q$, then
$|x-y|\le C_d|Q|^{1/d}$. Hence, for $x\in Q$,
\begin{align*}
 I_\alpha(f\sigma\charac_Q)(x)
 &=\int_Q\frac{f(y)\sigma(y)}{|x-y|^{d-\alpha}}\,dy \notag\\
 &\ge c_{d,\alpha}|Q|^{\alpha/d-1}\int_Qf(y)\sigma(y)\,dy
 =c_{d,\alpha}\cA_{Q,\sigma}^\alpha f(x).
\end{align*}
Using \cref{thm:TWT}, we complete the proof.
\end{proof}

\subsection{One-weight classes}

We now introduce the one--weight class, which will be used throughout the paper.

\begin{definition}
Let $1<p\le q<\infty$ and $1\le r,s\le\infty$. A weight $w$ belongs to
$A_{(p,r),(q,s)}$ if
\begin{equation*}
 [w]_{A_{(p,r),(q,s)}}
 :=\sup_Q
 \norm{w}_{L^{q,s}(Q)}
 \norm{w^{-1}}_{L^{p',r'}(Q)}<\infty.
\end{equation*}
\end{definition}

If $\alpha/d=1/p-1/q$, then
\begin{equation*}
 [w]_{A_{(p,r),(q,s)}}
 =[(w^q,w^{-p'})]_{\twA^\alpha_{(p,r),(q,s)}}.
\end{equation*}
For a cube $Q$, write
\begin{equation*}
 \cA_Q^\alpha f(x)
 :=|Q|^{\alpha/d}\Big(\avgint_Q f\Big)\charac_Q(x).
\end{equation*}
Taking $(u,\sigma)=(w^q,w^{-p'})$ in \cref{thm:TWT} and
using the change of variables $g=fw^{p'}$ gives the following one-weight
statement.

\begin{theorem}\label{thm:OWT}
Let $1<p\le q<\infty$, $1\le r,s\le\infty$, and let
$\alpha/d=1/p-1/q$. Then
\begin{equation*}
 \sup_Q\norm{\cA_Q^\alpha}_{L_w^{p,r}\to L_w^{q,s}}
 \approx [w]_{A_{(p,r),(q,s)}}.
\end{equation*}
\end{theorem}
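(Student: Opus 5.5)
The plan is to deduce the statement from \cref{thm:TWT} with the choice $(u,\sigma)=(w^q,w^{-p'})$, after checking that the one-weight operator norms coincide exactly with the two-weight ones. The first step is the identity of constants. Since $u^{1/q}=w$ and $\sigma^{1/p'}=w^{-1}$,
\[
[(w^q,w^{-p'})]_{\twA^\alpha_{(p,r),(q,s)}}=\sup_Q|Q|^{\frac\alpha d+\frac1q-\frac1p}\norm{w}_{L^{q,s}(Q)}\norm{w^{-1}}_{L^{p',r'}(Q)}.
\]
Under the hypothesis $\alpha/d=1/p-1/q$ the power of $|Q|$ is zero, so this quantity equals $[w]_{A_{(p,r),(q,s)}}$.

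The second step is the intertwining of operators and norms. Note that $\sigma^{1/p}=w^{-p'/p}=w^{1-p'}$. For a function $g$ on the one-weight side, set $f:=g\,w^{p'}=g\,\sigma^{-1}$, so that $f\sigma=g$. Then
\[
\cA^\alpha_{Q,\sigma}f=|Q|^{\alpha/d}\Big(\avgint_Q g\Big)\charac_Q=\cA^\alpha_Q g,
\]
and $\norm{\cA^\alpha_{Q,\sigma}f}_{L^{q,s}_{u^{1/q}}}=\norm{(\cA^\alpha_Q g)\,w}_{L^{q,s}}=\norm{\cA^\alpha_Q g}_{L^{q,s}_w}$. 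On the domain side,
\[
\norm{f}_{L^{p,r}_{\sigma^{1/p}}}=\norm{g\,w^{p'}w^{1-p'}}_{L^{p,r}}=\norm{gw}_{L^{p,r}}=\norm{g}_{L^{p,r}_w}.
\]
The map $g\mapsto gw^{p'}$ is a bijection between $L^{p,r}_w$ and $L^{p,r}_{\sigma^{1/p}}$ (assuming, as usual, $0<w<\infty$ a.e.). It is an isometry and conjugates $\cA^\alpha_Q$ to $\cA^\alpha_{Q,\sigma}$, so
\[
\norm{\cA^\alpha_Q}_{L^{p,r}_w\to L^{q,s}_w}=\norm{\cA^\alpha_{Q,\sigma}}_{L^{p,r}_{\sigma^{1/p}}\to L^{q,s}_{u^{1/q}}}
\]
for every cube $Q$.

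The final step is to take the supremum over $Q$ and apply \cref{thm:TWT} together with the first step; this gives the stated equivalence with constants depending only on $p,q,r,s$.

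The only delicate point is well-definedness and the degenerate cases. If $w^{-1}\notin L^{p',r'}(Q)$ for some $Q$, both sides should be infinite, and the converse direction of \cref{thm:TWT} already handles this through the near-extremal functions $h_{Q,\varepsilon}$. I would remark that local integrability of $g$ on $Q$ is guaranteed whenever the right-hand side is finite, by \cref{lem:L}(ii). Otherwise the argument is pure bookkeeping of exponents, and I expect no real obstacle.
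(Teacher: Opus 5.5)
Your proposal is correct and follows the paper's own route. The paper's proof is the single remark that one takes $(u,\sigma)=(w^q,w^{-p'})$ in \cref{thm:TWT} and changes variables by $g=fw^{p'}$; this is your map $g\mapsto gw^{p'}$ with the letters swapped. Your bookkeeping is accurate: $\sigma^{1/p}=w^{1-p'}$, $\sigma^{1/p'}=w^{-1}$, $u^{1/q}=w$, and the power of $|Q|$ vanishes when $\alpha/d=1/p-1/q$. Your remark that \cref{lem:L}(ii) makes $\cA_Q^\alpha g$ well defined is a detail the paper leaves implicit.
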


We first establish the elementary properties of $A_{(p,r),(q,s)}$.

\begin{proposition}
Let $1<p\le q<\infty$ and $1\le r,s\le\infty$. One has
\begin{equation*}
 [w^{-1}]_{A_{(q',s'),(p',r')}}
 =[w]_{A_{(p,r),(q,s)}}.
\end{equation*}
\end{proposition}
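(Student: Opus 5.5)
This is a direct unwinding of the definition. We check that the new indices are admissible, then show that the two factors in the defining supremum are simply exchanged.

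\emph{Admissibility.} Since $1<p\le q<\infty$, we have $1<q'\le p'<\infty$. Since $1\le r,s\le\infty$, we also have $1\le s',r'\le\infty$. Hence the class $A_{(q',s'),(p',r')}$ is defined: its first pair $(q',s')$ plays the role of $(p,r)$ and its second pair $(p',r')$ plays the role of $(q,s)$.

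\emph{Unwinding the definition.} Apply the definition with weight $w^{-1}$ and these indices. The first factor is the $L^{p',r'}(Q)$ norm of the weight itself, namely $\norm{w^{-1}}_{L^{p',r'}(Q)}$. The second factor is the $L^{(q')',(s')'}(Q)$ norm of the reciprocal weight, namely $\norm{w}_{L^{(q')',(s')'}(Q)}$.

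\emph{Involutivity of conjugation.} We have $(q')'=q$, and $(s')'=s$ holds for all $1\le s\le\infty$, including the endpoints $1'=\infty$ and $\infty'=1$. So for every cube $Q$,
\begin{equation*}
 \norm{w^{-1}}_{L^{p',r'}(Q)}\norm{w}_{L^{q,s}(Q)}
 =\norm{w}_{L^{q,s}(Q)}\norm{w^{-1}}_{L^{p',r'}(Q)}.
\end{equation*}
The right-hand side is exactly the quantity whose supremum defines $[w]_{A_{(p,r),(q,s)}}$. Taking the supremum over $Q$ gives the claimed equality. It is an exact identity, not merely an equivalence, because both sides use the same normalized quasi-norms $\norm{\cdot}_{L^{a,b}(Q)}=|Q|^{-1/a}\norm{\cdot\,\charac_Q}_{L^{a,b}}$.

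\emph{Main obstacle.} There is no analytic difficulty. The only point requiring care is bookkeeping: confirming that the definition applies to the dual indices, which requires $1<q'\le p'<\infty$, and that conjugation is an involution on $[1,\infty]$ at the endpoints. One could alternatively derive the result from duality via \cref{thm:OWT} and the associate space identity $(L_w^{a,b})'=L_{w^{-1}}^{a',b'}$, since $\cA_Q^\alpha$ is self-adjoint. That route would only give equivalence up to constants, so the direct computation is preferable.
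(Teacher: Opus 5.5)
Your proof is correct and is the same direct unwinding of the definition that the paper uses: with weight $w^{-1}$ and indices $(q',s'),(p',r')$, the factors $\norm{w^{-1}}_{L^{p',r'}(Q)}$ and $\norm{w}_{L^{q,s}(Q)}$ simply trade places. The paper writes this with unnormalized norms and checks the exponent identity $\frac1{q'}-\frac1{p'}-1=\frac1p-\frac1q-1$, which amounts to your observation that the normalizing powers of $|Q|$ in the two products coincide.
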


\begin{proof}
By the definition of $A_{(p,r),(q,s)}$, we have
\begin{align*}
 [w^{-1}]_{A_{(q',s'),(p',r')}}
 &=\sup_Q |Q|^{\frac1{q'}-\frac1{p'}-1}
 \norm{w^{-1}\charac_Q}_{L^{p',r'}}
 \norm{w\charac_Q}_{L^{q,s}}.
\end{align*}
Since
 $$
 \frac1{q'}-\frac1{p'}-1
 =\frac1p-\frac1q-1,
 $$
the right-hand side equals $[w]_{A_{(p,r),(q,s)}}$.
\end{proof}

\medskip

By Lemma \ref{lem:L}, the classes $A_{(p,r),(q,s)}$ are monotone in the secondary indices.

\begin{proposition}
Let $1<p\le q<\infty$ and $1\le r,s\le\infty$.
\begin{enumerate}[label=\textup{(\roman*)}]
\item If $1\le s_0\le s_1\le\infty$, then
\begin{equation*}
 A_{(p,r),(q,s_0)}\subset A_{(p,r),(q,s_1)}.
\end{equation*}
\item If $1\le r_0\le r_1\le\infty$, then
\begin{equation*}
 A_{(p,r_1),(q,s)}\subset A_{(p,r_0),(q,s)}.
\end{equation*}
\end{enumerate}
\end{proposition}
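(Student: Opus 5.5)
Both inclusions follow from a pointwise-in-$Q$ comparison of the two factors in the definition of $[w]_{A_{(p,r),(q,s)}}$. Each comparison rests on the embedding \cref{lem:L}(i), applied to $w\charac_Q$ or $w^{-1}\charac_Q$. The normalization $|Q|^{-1/a}$ in $\norm{h}_{L^{a,b}(Q)}$ depends only on the primary index $a$, so it is unaffected when the secondary index changes. The comparison therefore carries over to the normalized quantities with the same constant.

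For (i), fix a cube $Q$ and apply \cref{lem:L}(i) with $a=q\in(1,\infty)$ and $b_0=s_0\le b_1=s_1$ to the function $w\charac_Q$. This gives
\begin{equation*}
 \norm{w\charac_Q}_{L^{q,s_1}}\le C_{q,s_0,s_1}\norm{w\charac_Q}_{L^{q,s_0}}.
\end{equation*}
Multiplying by $|Q|^{-1/q}$ and then by $\norm{w^{-1}}_{L^{p',r'}(Q)}$, and taking the supremum over $Q$, yields
\begin{equation*}
 [w]_{A_{(p,r),(q,s_1)}}\le C[w]_{A_{(p,r),(q,s_0)}}.
\end{equation*}
This is exactly the inclusion $A_{(p,r),(q,s_0)}\subset A_{(p,r),(q,s_1)}$.

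For (ii), note that $r_0\le r_1$ implies $r_1'\le r_0'$, with the usual conventions $1'=\infty$ and $\infty'=1$. Since $p>1$, the primary index $p'$ lies in $(1,\infty)$, so \cref{lem:L}(i) applies to $w^{-1}\charac_Q$ and gives
\begin{equation*}
 \norm{w^{-1}\charac_Q}_{L^{p',r_0'}}\lesssim\norm{w^{-1}\charac_Q}_{L^{p',r_1'}}.
\end{equation*}
Multiplying by $|Q|^{-1/p'}\norm{w}_{L^{q,s}(Q)}$ and taking the supremum over $Q$ gives
\begin{equation*}
 [w]_{A_{(p,r_0),(q,s)}}\lesssim[w]_{A_{(p,r_1),(q,s)}}.
\end{equation*}
This is the inclusion $A_{(p,r_1),(q,s)}\subset A_{(p,r_0),(q,s)}$. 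Alternatively, (ii) follows by combining (i) with the duality proposition just proved, since $w^{-1}$ lies in a class whose second pair of indices is $(p',r')$.

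There is no real obstacle in either part. The only points to check are that the conjugation $r\mapsto r'$ reverses the order, including at the endpoints $r\in\{1,\infty\}$, and that the embedding constant is independent of $Q$. The latter holds because \cref{lem:L}(i) is a global inequality applied to the single function $w\charac_Q$ or $w^{-1}\charac_Q$.
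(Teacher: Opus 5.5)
Your proof is correct and takes the same approach as the paper, which simply cites the embedding $L^{a,b_0}\hookrightarrow L^{a,b_1}$ from \cref{lem:L}(i) for $b_0\le b_1$. You apply it to the $w\charac_Q$ factor for (i), and to the $w^{-1}\charac_Q$ factor for (ii), using that $r\mapsto r'$ reverses order.
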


\medskip

We can also obtain the following interpolation results for the Lorentz--Munckenhoupt weights.

\begin{theorem}
For $i\in\{0,1\}$, let
 $$
 w_i\in A_{(p_i,r_i),(q_i,s_i)},
 \qquad 1<p_i,q_i<\infty,
 \quad 1\le r_i,s_i\le\infty.
 $$
Fix $0<\theta<1$ and define
\begin{align*}
 \frac1{q_\theta}&=\frac{1-\theta}{q_0}+\frac\theta{q_1},
 &\frac1{s_\theta}&=\frac{1-\theta}{s_0}+\frac\theta{s_1},\\
 \frac1{p_\theta'}&=\frac{1-\theta}{p_0'}+\frac\theta{p_1'},
 &\frac1{r_\theta'}&=\frac{1-\theta}{r_0'}+\frac\theta{r_1'}.
\end{align*}
Then $w_\theta=w_0^{1-\theta}w_1^\theta$ satisfies
\begin{equation*}
 [w_\theta]_{A_{(p_\theta,r_\theta),(q_\theta,s_\theta)}}
 \lesssim
 [w_0]_{A_{(p_0,r_0),(q_0,s_0)}}^{1-\theta}
 [w_1]_{A_{(p_1,r_1),(q_1,s_1)}}^\theta.
\end{equation*}
\end{theorem}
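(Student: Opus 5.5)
The plan is to reduce the statement to a single Hölder-type inequality for geometric means in Lorentz spaces, and to apply it twice on each cube: once to $w_\theta$ and once to $w_\theta^{-1}$.

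\emph{Key lemma.} For exponents $0<a_0,a_1<\infty$ and $1\le b_0,b_1\le\infty$, define $a$ and $b$ by
$$
\frac1a=\frac{1-\theta}{a_0}+\frac\theta{a_1},
\qquad
\frac1b=\frac{1-\theta}{b_0}+\frac\theta{b_1}.
$$
I will prove that for nonnegative $f,g$ supported in a cube $Q$,
\begin{equation*}
 |Q|^{-1/a}\norm{f^{1-\theta}g^\theta}_{L^{a,b}}
 \lesssim
 \Big(|Q|^{-1/a_0}\norm{f}_{L^{a_0,b_0}}\Big)^{1-\theta}
 \Big(|Q|^{-1/a_1}\norm{g}_{L^{a_1,b_1}}\Big)^{\theta}.
\end{equation*}
The powers of $|Q|$ cancel exactly because of the definition of $a$. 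So this reduces to the unnormalized inequality $\norm{f^{1-\theta}g^\theta}_{L^{a,b}}\lesssim\norm{f}_{L^{a_0,b_0}}^{1-\theta}\norm{g}_{L^{a_1,b_1}}^\theta$.

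I would argue directly with rearrangements, rather than through \cref{lem:L}(iv), so that the endpoint cases $b_i=\infty$ and primed indices equal to $\infty$ are covered uniformly. The steps are:
\begin{enumerate}[label=\textup{(\arabic*)}]
\item Use $(f^{1-\theta}g^\theta)^*(t)\le (f^*(t/2))^{1-\theta}(g^*(t/2))^\theta$.
\item Write $t^{1/a}=(t^{1/a_0})^{1-\theta}(t^{1/a_1})^\theta$.
\item Apply Hölder's inequality in $L^b(dt/t)$ to the product
$$
\big[t^{1/a_0}f^*(t/2)\big]^{1-\theta}\,\big[t^{1/a_1}g^*(t/2)\big]^{\theta},
$$
with the exponents $b_0/((1-\theta)b)$ and $b_1/(\theta b)$, which are conjugate by the definition of $b$.
\item Change variables $t\mapsto 2t$.
\end{enumerate}
If $b_0=\infty$ or $b_1=\infty$, the corresponding factor is simply taken out as a supremum. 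The normalizing constants $(b/a)^{1/b}$ only affect the implicit constant.

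\emph{Application.} Fix a cube $Q$.
\begin{enumerate}[label=\textup{(\arabic*)}]
\item Apply the lemma with $f=w_0\charac_Q$, $g=w_1\charac_Q$, $(a_i,b_i)=(q_i,s_i)$. This gives
$$
\norm{w_\theta}_{L^{q_\theta,s_\theta}(Q)}\lesssim\norm{w_0}_{L^{q_0,s_0}(Q)}^{1-\theta}\norm{w_1}_{L^{q_1,s_1}(Q)}^\theta.
$$
\item Apply it again with $f=w_0^{-1}\charac_Q$, $g=w_1^{-1}\charac_Q$, $(a_i,b_i)=(p_i',r_i')$. Since $w_\theta^{-1}=(w_0^{-1})^{1-\theta}(w_1^{-1})^\theta$ and $p_\theta',r_\theta'$ are defined by the same convex combinations as in the lemma, this gives the analogous bound for $\norm{w_\theta^{-1}}_{L^{p_\theta',r_\theta'}(Q)}$.
\item Multiply the two bounds and regroup the factors as
$$
\big(\norm{w_0}_{L^{q_0,s_0}(Q)}\norm{w_0^{-1}}_{L^{p_0',r_0'}(Q)}\big)^{1-\theta}\big(\norm{w_1}_{L^{q_1,s_1}(Q)}\norm{w_1^{-1}}_{L^{p_1',r_1'}(Q)}\big)^\theta.
$$
\item Bound each bracket by the corresponding characteristic and take the supremum over $Q$.
\end{enumerate}

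\emph{Expected obstacle.} The main point needing care is the range of the indices. If some $p_i=1$ or $r_i=1$, then $p_i'$ or $r_i'$ is $\infty$, and the hypothesis $1<p_i$ excludes only the first of these. The rearrangement-based Hölder argument handles $b_i=\infty$ directly. Finite $a_i$ is guaranteed because $1<p_i,q_i<\infty$. It should also be checked that the definitions of $p_\theta,r_\theta$ (with possibly $p_\theta\le q_\theta$ not required) give admissible indices, but the lemma itself does not need this.
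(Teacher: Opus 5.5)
Your proposal is correct and takes essentially the same route as the paper. On each cube you bound $\norm{w_\theta}_{L^{q_\theta,s_\theta}(Q)}$ and $\norm{w_\theta^{-1}}_{L^{p_\theta',r_\theta'}(Q)}$ by the corresponding geometric means and multiply, exactly as the paper does; the only difference is that the paper gets the geometric-mean bound from the exact identity $\norm{h^\lambda}_{L^{a/\lambda,b/\lambda}(Q)}=\norm{h}_{L^{a,b}(Q)}^\lambda$ plus O'Neil's product inequality (\cref{lem:L}(iv)), whereas you prove it directly via $(fg)^*(t)\le f^*(t/2)g^*(t/2)$ and H\"older in $L^b(dt/t)$, which is that same inequality unpacked.
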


\begin{proof}
Fix a cube $Q$. The definition of Lorentz norms gives
\begin{align*}
 \norm{w_0^{1-\theta}}_{L^{q_0/(1-\theta),s_0/(1-\theta)}(Q)}
 &=\norm{w_0}_{L^{q_0,s_0}(Q)}^{1-\theta},\\
 \norm{w_1^\theta}_{L^{q_1/\theta,s_1/\theta}(Q)}
 &=\norm{w_1}_{L^{q_1,s_1}(Q)}^\theta.
\end{align*}
Hence
\begin{equation}\label{eq:GOF}
 \norm{w_\theta}_{L^{q_\theta,s_\theta}(Q)}
 \lesssim
 \norm{w_0}_{L^{q_0,s_0}(Q)}^{1-\theta}
 \norm{w_1}_{L^{q_1,s_1}(Q)}^\theta.
\end{equation}
Applying the same argument to
$w_\theta^{-1}=w_0^{\theta-1}w_1^{-\theta}$ gives
\begin{equation}\label{eq:GIF}
 \norm{w_\theta^{-1}}_{L^{p_\theta',r_\theta'}(Q)}
 \lesssim
 \norm{w_0^{-1}}_{L^{p_0',r_0'}(Q)}^{1-\theta}
 \norm{w_1^{-1}}_{L^{p_1',r_1'}(Q)}^\theta.
\end{equation}
Multiplying \eqref{eq:GOF} and
\eqref{eq:GIF}, the desired result is obtained.
\end{proof}

We next compare the new class with the classical class $A_{p,q}$.

\begin{lemma}\label{lem:RHL}
Let $1<a<\infty$, $1\le b\le\infty$, and $\varepsilon>0$. Then
\begin{equation*}
 \norm{f}_{L^{a,b}(Q)}
 \lesssim_{a,b,\varepsilon}
 \Big(\avgint_Q|f|^{a(1+\varepsilon)}\Big)^{1/[a(1+\varepsilon)]}.
\end{equation*}
\end{lemma}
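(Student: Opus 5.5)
This is an instance of the embedding in \cref{lem:L}(iii), applied on the cube $Q$ with exponents $a_0=a<a_1=a(1+\varepsilon)$, $b_0=b$ and $b_1=a(1+\varepsilon)$. Unwinding the normalized notation $\norm{h}_{L^{a,b}(Q)}=|Q|^{-1/a}\norm{h\charac_Q}_{L^{a,b}}$, that lemma gives
\begin{equation*}
 \norm{f}_{L^{a,b}(Q)}
 =|Q|^{-1/a}\norm{f\charac_Q}_{L^{a,b}}
 \lesssim |Q|^{-1/a_1}\norm{f\charac_Q}_{L^{a_1,a_1}}.
\end{equation*}
Since $L^{a_1,a_1}=L^{a_1}$ with the normalization fixed in Section~2 (the factor $b/a$ equals $1$ when $b=a$), the right-hand side is exactly $\big(\avgint_Q|f|^{a(1+\varepsilon)}\big)^{1/[a(1+\varepsilon)]}$. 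This is the claimed inequality.

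The only point needing care is that the implicit constant in \cref{lem:L}(iii) depends only on the indices and not on $|Q|$. To make this transparent, I would verify it directly, and this also covers $b=\infty$. Put $g=f\charac_Q$, so that $g^*(t)=0$ for $t>|Q|$. Write $\rho=a(1+\varepsilon)$. The pointwise bound $t\,g^*(t)^{\rho}\le\int_0^t g^{*\rho}\le\norm{g}_{L^\rho}^\rho$ gives
\begin{equation*}
 g^*(t)\le t^{-1/\rho}\norm{g}_{L^\rho}.
\end{equation*}
Substituting this into the definition of the $L^{a,b}$ quasi-norm gives
\begin{equation*}
 \norm{g}_{L^{a,b}}^b\lesssim \norm{g}_{L^\rho}^b\int_0^{|Q|}t^{b(1/a-1/\rho)}\,\frac{dt}{t}.
\end{equation*}
This integral converges because $1/a-1/\rho>0$, and it equals $C_{a,b,\varepsilon}|Q|^{b(1/a-1/\rho)}$. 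For $b=\infty$ one takes the supremum over $t\le|Q|$ instead. Multiplying by $|Q|^{-1/a}$ yields the result.

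The main (mild) obstacle is exactly this scale-invariance. The strict inequality $\rho>a$, guaranteed by $\varepsilon>0$, is what makes the integral converge for finite $b$. This explains why the lemma cannot take $\varepsilon=0$ when $b<a$.
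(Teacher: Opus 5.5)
Your proposal is correct and follows the paper's proof. The paper also gets the bound in one line from the embedding in Lemma~\ref{lem:L}(iii), with $a_0=a$ and $a_1=a(1+\varepsilon)$, together with the identification $L^{a(1+\varepsilon),a(1+\varepsilon)}=L^{a(1+\varepsilon)}$. Your extra direct check via $g^*(t)\le t^{-1/\rho}\norm{g}_{L^\rho}$ on $(0,|Q|]$ is a valid self-contained confirmation that the constant does not depend on $|Q|$, including when $b=\infty$; the paper leaves this implicit.
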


\begin{proof}
Since
$L^{a(1+\varepsilon),a(1+\varepsilon)}=L^{a(1+\varepsilon)}$, we get
\begin{align*}
& \norm{f}_{L^{a,b}(Q)}
=|Q|^{-1/a}\norm{f\charac_Q}_{L^{a,b}}\\
 &\lesssim |Q|^{-1/[a(1+\varepsilon)]}
 \norm{f\charac_Q}_{L^{a(1+\varepsilon)}}=\Big(\avgint_Q|f|^{a(1+\varepsilon)}\Big)^{1/[a(1+\varepsilon)]}.
\end{align*}
\end{proof}

\begin{theorem}\label{thm:AA}
Let $1<p\le q<\infty$ and $1\le r,s\le\infty$.
\begin{enumerate}[label=\textup{(\roman*)}]
\item One has
 $$
 A_{p,q}\subset A_{(p,r),(q,s)}.
 $$
\item If $s\le q$ and $r\ge p$, then
\begin{equation*}
 A_{(p,r),(q,s)}=A_{p,q}.
\end{equation*}
\end{enumerate}
\end{theorem}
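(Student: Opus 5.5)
For \textup{(i)} we combine the reverse H\"older inequality for classical Muckenhoupt weights with \cref{lem:RHL}. For \textup{(ii)} we use the embedding \cref{lem:L}\textup{(i)}, which gives the reverse inclusion directly.

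\emph{Part (i).} Let $w\in A_{p,q}$. By the standard characterization,
$$
w^q\in A_{1+q/p'} \quad\text{and}\quad w^{-p'}\in A_{1+p'/q},
$$
with characteristics controlled by powers of $[w]_{A_{p,q}}$. Each of these weights belongs to $A_\infty$, so it satisfies a reverse H\"older inequality. Hence there is $\varepsilon>0$, depending only on $p,q,d$ and $[w]_{A_{p,q}}$, such that for every cube $Q$
$$
\Big(\avgint_Q w^{q(1+\varepsilon)}\Big)^{\frac1{q(1+\varepsilon)}}\lesssim \Big(\avgint_Q w^q\Big)^{1/q},
\qquad
\Big(\avgint_Q w^{-p'(1+\varepsilon)}\Big)^{\frac1{p'(1+\varepsilon)}}\lesssim \Big(\avgint_Q w^{-p'}\Big)^{1/p'} .
$$
We now apply \cref{lem:RHL} twice: with $a=q$, $b=s$, $f=w$, and with $a=p'$, $b=r'$, $f=w^{-1}$. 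Both applications are legitimate because $1<q<\infty$, $1<p'<\infty$, and $1\le s,r'\le\infty$. This gives
$$
\norm{w}_{L^{q,s}(Q)}\,\norm{w^{-1}}_{L^{p',r'}(Q)}
\lesssim \Big(\avgint_Q w^q\Big)^{1/q}\Big(\avgint_Q w^{-p'}\Big)^{1/p'}\le [w]_{A_{p,q}} .
$$
Taking the supremum over $Q$ yields $[w]_{A_{(p,r),(q,s)}}\lesssim_{[w]_{A_{p,q}}}1$. The only nontrivial input is the reverse H\"older property of $w^q$ and $w^{-p'}$, with $\varepsilon$ uniform in $Q$. This step is the main obstacle, and we will simply quote the classical theory for it (Jones/Coifman--Fefferman). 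The implicit constant depends on $[w]_{A_{p,q}}$, which suffices for the set inclusion.

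\emph{Part (ii).} Assume $s\le q$ and $r\ge p$, so that $r'\le p'$. By \cref{lem:L}\textup{(i)}, applied to $w\charac_Q$ and to $w^{-1}\charac_Q$ and then multiplied by the normalizing factors $|Q|^{-1/q}$ and $|Q|^{-1/p'}$, we obtain
$$
\norm{w}_{L^{q,q}(Q)}\lesssim\norm{w}_{L^{q,s}(Q)},
\qquad
\norm{w^{-1}}_{L^{p',p'}(Q)}\lesssim \norm{w^{-1}}_{L^{p',r'}(Q)} .
$$
With the normalization chosen for $\norm{\cdot}_{L^{a,b}}$, the case $b=a$ is exactly the $L^a$ norm. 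Therefore
$$
\norm{w}_{L^{q,q}(Q)}=\Big(\avgint_Qw^q\Big)^{1/q}
\quad\text{and}\quad
\norm{w^{-1}}_{L^{p',p'}(Q)}=\Big(\avgint_Q w^{-p'}\Big)^{1/p'} .
$$
Multiplying the two inequalities and taking the supremum over $Q$ gives $[w]_{A_{p,q}}\lesssim[w]_{A_{(p,r),(q,s)}}$, that is, $A_{(p,r),(q,s)}\subset A_{p,q}$. Together with \textup{(i)}, this gives the equality in \textup{(ii)}.
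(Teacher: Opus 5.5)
Your proof is correct and matches the paper's argument essentially step for step. For (i) you combine the reverse H\"older inequalities for $w^q$ and $w^{-p'}$ (both in $A_\infty$) with \cref{lem:RHL}. For (ii) you use the embeddings $L^{q,s}\hookrightarrow L^{q,q}$ and $L^{p',r'}\hookrightarrow L^{p',p'}$, together with the fact that the normalized $L^{a,a}$ quasi-norm is the $L^a$ norm. Your explicit memberships $w^q\in A_{1+q/p'}$ and $w^{-p'}\in A_{1+p'/q}$ simply spell out the paper's one-line appeal to $A_\infty$.
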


\begin{proof}
If $w\in A_{p,q}$, then $w^q$ and $w^{-p'}$ belong to
$A_\infty$. Hence there exist $\varepsilon_0,\varepsilon_1>0$ such that,
uniformly over cubes $Q$,
\begin{align}
 \Big(\avgint_Qw^{q(1+\varepsilon_0)}\Big)^{1/[q(1+\varepsilon_0)]}
 &\lesssim \Big(\avgint_Qw^q\Big)^{1/q},
 \label{eq:RH-wq}\\
 \Big(\avgint_Qw^{-p'(1+\varepsilon_1)}\Big)^{1/[p'(1+\varepsilon_1)]}
 &\lesssim \Big(\avgint_Qw^{-p'}\Big)^{1/p'}.
 \label{eq:RH-wp}
\end{align}
By \cref{lem:RHL},
\begin{align*}
 \norm{w}_{L^{q,s}(Q)}
 &\lesssim
 \Big(\avgint_Qw^{q(1+\varepsilon_0)}\Big)^{1/[q(1+\varepsilon_0)]},\\
 \norm{w^{-1}}_{L^{p',r'}(Q)}
 &\lesssim
 \Big(\avgint_Qw^{-p'(1+\varepsilon_1)}\Big)^{1/[p'(1+\varepsilon_1)]}.
\end{align*}
Combining these inequalities with \eqref{eq:RH-wq}--\eqref{eq:RH-wp}
gives
 $$
 \norm{w}_{L^{q,s}(Q)}
 \norm{w^{-1}}_{L^{p',r'}(Q)}
 \lesssim [w]_{A_{p,q}}.
 $$
Taking the supremum over $Q$ proves (i).

Now assume $s\le q$ and $r\ge p$. Then $r'\le p'$. The Lorentz embeddings
$L^{q,s}\hookrightarrow L^{q,q}$ and
$L^{p',r'}\hookrightarrow L^{p',p'}$ imply
\begin{align*}
 \Big(\avgint_Qw^q\Big)^{1/q}
 &=\norm{w}_{L^{q,q}(Q)}
 \lesssim\norm{w}_{L^{q,s}(Q)},\\
 \Big(\avgint_Qw^{-p'}\Big)^{1/p'}
 &=\norm{w^{-1}}_{L^{p',p'}(Q)}
 \lesssim\norm{w^{-1}}_{L^{p',r'}(Q)}.
\end{align*}
Therefore
 $$
 [w]_{A_{p,q}}
 \lesssim [w]_{A_{(p,r),(q,s)}},
 $$
so $A_{(p,r),(q,s)}\subset A_{p,q}$. Together with (i), this proves (ii).
\end{proof}

Next, we consider the following properties of $A_{p,q}$ and $A_{(p,r),(q,s)}$.

\begin{proposition}\label{prop:OP}
Let $1<p\le q<\infty$ and $w\in A_{p,q}$ and set $\delta_0=1/p-1/q$. There exist pairs
$(p_-,q_-)$ and $(p_+,q_+)$ such that
\begin{equation*}
 1<p_-<p<p_+<\infty,
 \qquad
 1<q_-<q<q_+<\infty,
\end{equation*}
\begin{equation*}
 \frac1{p_-}-\frac1{q_-}
 =\frac1p-\frac1q
 =\frac1{p_+}-\frac1{q_+}=\delta_0,
\end{equation*}
and
\begin{equation*}
 w\in A_{p_-,q_-}\cap A_{p_+,q_+}.
\end{equation*}
\end{proposition}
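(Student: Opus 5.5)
The plan is to reduce everything to the classical openness of $A_r$ classes. Recall that $w\in A_{p,q}$ is equivalent to $w^q\in A_{1+q/p'}$, since
\begin{equation*}
 [w]_{A_{p,q}}^q=\sup_Q\Big(\avgint_Qw^q\Big)\Big(\avgint_Q (w^q)^{-p'/q}\Big)^{q/p'}
\end{equation*}
and $p'/q=1/(\rho-1)$ with $\rho=1+q/p'$. Along the line $1/p-1/q=\delta_0$ we parametrize by $t=1/q\in(0,1-\delta_0)$, so that $1/p=t+\delta_0$. Then $\rho(t)=1+q/p'=1+(1-t-\delta_0)/t$ and the exponent $q(t)=1/t$ both vary continuously. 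The difficulty is that moving along the line changes both the power of $w$ and the $A_\rho$ index, so openness in $\rho$ alone is not enough. We therefore use the reverse H\"older inequality as well.

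\textbf{Step 1: reverse H\"older.} Since $w\in A_{p,q}$, the weights $w^q$ and $w^{-p'}$ lie in $A_\infty$, as already used in the proof of \cref{thm:AA}. So there are $\varepsilon_0,\varepsilon_1>0$ giving \eqref{eq:RH-wq} and \eqref{eq:RH-wp}.

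\textbf{Step 2: the pair $(p_+,q_+)$.} Choose $q_+=q(1+\eta)$ with $\eta\le\varepsilon_0$ small, and define $p_+$ by $1/p_+=1/q_++\delta_0$. Then $p_+>p$, and $p_+<\infty$ because $\delta_0+1/q_+>0$. Also $p_+'<p'$. By H\"older's inequality,
\begin{equation*}
 \Big(\avgint_Qw^{-p_+'}\Big)^{1/p_+'}\le\Big(\avgint_Qw^{-p'}\Big)^{1/p'},
\end{equation*}
and by \eqref{eq:RH-wq},
\begin{equation*}
 \Big(\avgint_Qw^{q_+}\Big)^{1/q_+}\lesssim\Big(\avgint_Qw^{q}\Big)^{1/q}.
\end{equation*}
Multiplying these gives $[w]_{A_{p_+,q_+}}\lesssim[w]_{A_{p,q}}$.

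\textbf{Step 3: the pair $(p_-,q_-)$.} This is symmetric: choose $p_-$ with $p_-'=p'(1+\eta)$ and $\eta\le\varepsilon_1$, so $p_-<p$ and $p_->1$. Define $q_-$ by $1/q_-=1/p_--\delta_0$. Then $1/q_-<1/p-\delta_0+(1/p_--1/p)$, and $q_-<q$. Positivity of $1/q_-$ holds since $1/p_->1/p$, and $q_->1$ holds for $\eta$ small because $q>1$. Now H\"older's inequality bounds $(\avgint_Q w^{q_-})^{1/q_-}$ by $(\avgint_Q w^{q})^{1/q}$, and \eqref{eq:RH-wp} bounds the $p_-'$ average of $w^{-1}$ by its $p'$ average. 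Hence $[w]_{A_{p_-,q_-}}\lesssim[w]_{A_{p,q}}$.

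The only delicate point is bookkeeping. We must check that the relation $1/p-1/q=\delta_0$ is preserved, which holds by construction, and that all strict inequalities $1<p_-<p<p_+<\infty$ and $1<q_-<q<q_+<\infty$ hold for small $\eta$, which follows by continuity. The substantive input is the $A_\infty$ reverse H\"older property of $w^q$ and $w^{-p'}$, which the paper already invokes in \cref{thm:AA}. I expect no further obstacle; in particular we never need the openness $A_\rho\subset A_{\rho-\epsilon}$ separately, since reverse H\"older applied to the appropriate factor suffices in each direction.
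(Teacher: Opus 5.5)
Your proof is correct and takes the same approach as the paper's. In each direction, the reverse H\"older inequality for $w^q$ or $w^{-p'}$ controls the factor whose exponent increases, plain H\"older controls the other factor, and the new pair is defined along the line $1/p-1/q=\delta_0$. One small slip: in Step 3 the relation $1/q_-=1/q+(1/p_--1/p)$ is an equality rather than a strict inequality, but this does not affect the conclusion $q_-<q$.
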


\begin{proof}
Let $\varepsilon_0,\varepsilon_1>0$ be the reverse-H\"older exponents in
\eqref{eq:RH-wq}--\eqref{eq:RH-wp}.
Choose $p_-<p$ sufficiently close to $p$ that
\begin{equation}\label{eq:Pminus}
 p_-'<p'(1+\varepsilon_1),
\end{equation}
and define $q_-$ by
 $$
 \frac1{q_-}=\frac1{p_-}-\delta_0.
 $$
For $p_-$ close enough to $p$, we have $1<q_-<q$. Since $q_-<q$,
monotonicity of normalized Lebesgue norms gives
\begin{equation}\label{eq:Qminus}
 \Big(\avgint_Qw^{q_-}\Big)^{1/q_-}
 \le \Big(\avgint_Qw^q\Big)^{1/q}.
\end{equation}
By \eqref{eq:Pminus}, normalized Lebesgue monotonicity followed by
\eqref{eq:RH-wp} yields
\begin{align}
 \Big(\avgint_Qw^{-p_-'}\Big)^{1/p_-'}
 &\le
 \Big(\avgint_Qw^{-p'(1+\varepsilon_1)}\Big)^{1/[p'(1+\varepsilon_1)]}
\lesssim
 \Big(\avgint_Qw^{-p'}\Big)^{1/p'}.                  \label{eq:pminus-factor}
\end{align}
Multiplying \eqref{eq:Qminus} and \eqref{eq:pminus-factor} proves
$w\in A_{p_-,q_-}$.

Next choose $q_+>q$ sufficiently close to $q$ that
\begin{equation}\label{eq:qplus-choice}
 q_+<q(1+\varepsilon_0),
\end{equation}
and define $p_+$ by
 $$
 \frac1{p_+}=\delta_0+\frac1{q_+}.
 $$
Then $p_+>p$, and hence $p_+'<p'$. Therefore
\begin{equation}\label{eq:pplus-factor}
 \Big(\avgint_Qw^{-p_+'}\Big)^{1/p_+'}
 \le \Big(\avgint_Qw^{-p'}\Big)^{1/p'}.
\end{equation}
On the other hand, \eqref{eq:qplus-choice} and \eqref{eq:RH-wq} imply
\begin{align}
 \Big(\avgint_Qw^{q_+}\Big)^{1/q_+}
 &\le
 \Big(\avgint_Qw^{q(1+\varepsilon_0)}\Big)^{1/[q(1+\varepsilon_0)]}
 \lesssim
 \Big(\avgint_Qw^q\Big)^{1/q}.                      \label{eq:qplus-factor}
\end{align}
Multiplying \eqref{eq:pplus-factor} and \eqref{eq:qplus-factor} proves
$w\in A_{p_+,q_+}$.
\end{proof}

The preceding result gives the following consequence.

\begin{corollary}
Let $1<p\le q<\infty$ and $1\le r,s\le\infty$. Assume
$s\le q$, $r\ge p$, and $w\in A_{(p,r),(q,s)}$.  Then the conclusions of
\cref{prop:OP} hold.  Moreover, for every
$1\le\rho,\tau\le\infty$,
 $$
 w\in A_{(p_-,\rho),(q_-,\tau)}
 \cap A_{(p_+,\rho),(q_+,\tau)}.
 $$
\end{corollary}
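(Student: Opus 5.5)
By \cref{thm:AA}(ii), the hypotheses $s\le q$, $r\ge p$ and $w\in A_{(p,r),(q,s)}$ give $w\in A_{p,q}$. We can therefore apply \cref{prop:OP} directly. This produces pairs $(p_-,q_-)$ and $(p_+,q_+)$ with the stated ordering and the same gap $\delta_0=1/p-1/q$, such that $w\in A_{p_-,q_-}\cap A_{p_+,q_+}$. That settles the first assertion.

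For the second assertion, apply \cref{thm:AA}(i) with $(p,q)$ replaced by $(p_\pm,q_\pm)$. For this we need $1<p_\pm\le q_\pm<\infty$. These hold because $1<p_-<p<p_+<\infty$ and $1<q_-<q<q_+<\infty$, and because $1/p_\pm-1/q_\pm=\delta_0\ge0$ forces $p_\pm\le q_\pm$. Part (i) of \cref{thm:AA} holds for every choice of secondary indices $1\le\rho,\tau\le\infty$. It then gives
\[
A_{p_\pm,q_\pm}\subset A_{(p_\pm,\rho),(q_\pm,\tau)},
\]
and hence $w\in A_{(p_-,\rho),(q_-,\tau)}\cap A_{(p_+,\rho),(q_+,\tau)}$.

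The only point needing care is whether \cref{thm:AA}(i) genuinely covers all $\rho,\tau\in[1,\infty]$, including the endpoints $\rho=1$ (so $\rho'=\infty$) and $\tau=\infty$. Its proof rests on \cref{lem:RHL} and on the reverse Hölder inequalities for $w^{q_\pm}$ and $w^{-p_\pm'}$. Both ingredients remain valid in the new exponents because $w\in A_{p_\pm,q_\pm}$ again puts these powers in $A_\infty$. \cref{lem:RHL} allows any secondary index $b\in[1,\infty]$, since it only uses the embedding of $L^{a(1+\varepsilon)}$ into $L^{a,b}$ on a cube, which is \cref{lem:L}(iii). So no real obstacle arises, and the proof is a two-line combination of \cref{thm:AA} and \cref{prop:OP}.
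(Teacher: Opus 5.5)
Your proposal is correct and matches the paper's proof: \cref{thm:AA}(ii) gives $w\in A_{p,q}$, \cref{prop:OP} produces the pairs $(p_\pm,q_\pm)$ with $w\in A_{p_\pm,q_\pm}$, and \cref{thm:AA}(i) then yields membership in $A_{(p_\pm,\rho),(q_\pm,\tau)}$ for all $1\le\rho,\tau\le\infty$. Your added checks, that $1<p_\pm\le q_\pm<\infty$ holds and that \cref{lem:RHL} covers the endpoint secondary indices, are correct and make explicit what the paper leaves implicit.
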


\begin{proof}
The conclusion follows from \cref{thm:AA}, \cref{prop:OP} and
\cref{thm:AA}(i).
\end{proof}

\subsection{Power weights}

We next investigate that for which scope of real number $\gamma$,  $|x|^\gamma\in A_{(p,r), (q,s)}$ holds.

\begin{theorem}\label{thm:power-weights}
Let $1<p\le q<\infty$, $1\le r,s\le\infty$, and
$w_\gamma(x)=|x|^\gamma$ on $\R^d$. Then
$w_\gamma\in A_{(p,r),(q,s)}$ if and only if
\begin{align*}
 &\gamma>-\frac dq,
 \quad\text{or}\quad
 \gamma=-\frac dq\ \text{and}\ s=\infty;
 \\
 &\gamma<\frac d{p'},
 \quad\text{or}\quad
 \gamma=\frac d{p'}\ \text{and}\ r=1.
\end{align*}
\end{theorem}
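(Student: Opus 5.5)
The plan is to reduce everything to two one-variable computations. The first is the local Lorentz integrability of $|x|^{\gamma}\charac_B$ near the origin. The second is a scaling argument that treats cubes far from the origin and cubes near the origin separately. Throughout, write $Q$ for a cube with center $x_0$ and side length $\ell$, and $B_R=B(0,R)$. Since $\norm{h}_{L^{a,b}(Q)}=|Q|^{-1/a}\norm{h\charac_Q}_{L^{a,b}}$ and $\norm{\charac_Q}_{L^{a,b}}\approx|Q|^{1/a}$, the normalized norm of a constant $c$ on $Q$ is comparable to $c$.

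\emph{Step 1 (rearrangement of a power on a ball).} For $\gamma<0$, the decreasing rearrangement of $|x|^{\gamma}\charac_{B_R}$ is
$$(t/v_d)^{\gamma/d}\,\charac_{(0,v_dR^d)}(t),$$
where $v_d=|B_1|$. For $\gamma\ge0$, the function $|x|^\gamma\charac_{B_R}$ is bounded by $R^\gamma$ and supported on a set of measure $v_dR^d$. Inserting this into the definition of the $L^{q,s}$ quasi-norm gives the following.
\begin{itemize}
\item If $\gamma>-d/q$, then $\norm{|x|^\gamma\charac_{B_R}}_{L^{q,s}}=C\,R^{\gamma+d/q}<\infty$ for every $s$. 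This follows by scaling $x\mapsto Rx$, together with finiteness at $R=1$: there $t^{1/q}f^*(t)\approx t^{1/q+\gamma/d}$, which is integrable against $dt/t$ near $0$.
\item If $\gamma=-d/q$, then $t^{1/q}f^*(t)$ is constant on $(0,v_dR^d)$. So the norm is finite if and only if $s=\infty$.
\item If $\gamma<-d/q$, the function is not even in $L^{q,\infty}$.
\end{itemize}
Applying the same computation to $|x|^{-\gamma}$ in $L^{p',r'}$, the local condition becomes $-\gamma>-d/p'$, or $-\gamma=-d/p'$ with $r'=\infty$. This is exactly $\gamma<d/p'$, or $\gamma=d/p'$ with $r=1$.

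\emph{Step 2 (necessity).} Take any cube $Q$ containing a neighborhood $B_\rho$ of the origin. Then $\norm{w_\gamma}_{L^{q,s}(Q)}\ge|Q|^{-1/q}\norm{w_\gamma\charac_{B_\rho}}_{L^{q,s}}$, because the rearrangement is monotone in the function. The analogous inequality holds for $w_\gamma^{-1}$. Both factors are strictly positive, so finiteness of $[w_\gamma]_{A_{(p,r),(q,s)}}$ forces both local norms to be finite. By Step 1, this is exactly the two displayed conditions.

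\emph{Step 3 (sufficiency).} Assume both conditions hold, and split into two cases.
\begin{itemize}
\item \emph{Far cubes}, $|x_0|\ge 2\sqrt d\,\ell$. Here $|x|\approx|x_0|$ on $Q$ with constants depending only on $d$. The two normalized norms are therefore $\approx|x_0|^{\gamma}$ and $\approx|x_0|^{-\gamma}$, and their product is $\lesssim1$.
\item \emph{Near cubes}, $|x_0|<2\sqrt d\,\ell$. Here $Q\subset B_{C\ell}$, so by monotonicity and Step 1,
$$\norm{w_\gamma}_{L^{q,s}(Q)}\lesssim \ell^{-d/q}\,\ell^{\gamma+d/q}=\ell^{\gamma},\qquad \norm{w_\gamma^{-1}}_{L^{p',r'}(Q)}\lesssim \ell^{-d/p'}\,\ell^{-\gamma+d/p'}=\ell^{-\gamma}.$$
This uses the scaling identity $\norm{|x|^{\gamma}\charac_{B_R}}_{L^{q,s}}=R^{\gamma+d/q}\norm{|x|^\gamma\charac_{B_1}}_{L^{q,s}}$, which holds also in the endpoint cases $\gamma=-d/q$, $s=\infty$ and $\gamma=d/p'$, $r=1$. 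The product is again bounded.
\end{itemize}
Taking the supremum over $Q$ completes the proof.

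The only delicate point is the endpoint bookkeeping in Step 1: checking that $t^{1/q}f^*(t)$ is exactly constant when $\gamma=-d/q$, and translating $r'=\infty$ into $r=1$. Once the scaling identity is recorded, the rest is routine.
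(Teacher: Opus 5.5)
Your proof is correct and follows essentially the same route as the paper. Both arguments split cubes into those far from the origin, where $|x|\approx|x_Q|$ makes the product of the two normalized norms trivially bounded, and those near the origin. The near cubes are absorbed into balls $B(0,C\ell(Q))$, where the scaling identity and the critical-exponent rearrangement computation decide everything. Your Step 2 makes the necessity direction (testing on a cube containing a neighborhood of the origin) more explicit than the paper, which leaves it implicit.
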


\begin{proof}
We split the cubes into two classes. Write $Q=Q(x_Q,\ell(Q))$.

If $|x_Q|\ge4\sqrt d\,\ell(Q)$, then
 $$
 \frac12|x_Q|\le |x|\le\frac32|x_Q|,
 \qquad x\in Q.
 $$
Consequently,
\begin{align*}
 \norm{w_\gamma}_{L^{q,s}(Q)}&\approx |x_Q|^\gamma,
 &\norm{w_\gamma^{-1}}_{L^{p',r'}(Q)}&\approx |x_Q|^{-\gamma},
\end{align*}
and the product of the two norms is uniformly bounded.

If $|x_Q|<4\sqrt d\,\ell(Q)$, then
 $$
 Q\subset B(0,C_d\ell(Q)),
 \qquad |B(0,C_d\ell(Q))|\approx_d |Q|.
 $$
Thus it is enough to compute the norms on balls $B(0,R)$. If $\eta>-d/a$
\begin{equation}\label{eq:power-1}
 \norm{|x|^\eta\charac_{B(0,R)}}_{L^{a,b}}
 =R^{\eta+d/a}
 \norm{|x|^\eta\charac_{B(0,1)}}_{L^{a,b}}<\infty.
\end{equation}
At the critical exponent
$\eta=-d/a$, the decreasing rearrangement satisfies
 $$
 (|x|^{-d/a}\charac_{B(0,1)})^*(t)\approx t^{-1/a},
 \qquad 0<t<c_d,
 $$
so $b=\infty$ if and only if
\begin{equation}\label{eq:power-2}
 |x|^{-d/a}\charac_{B(0,1)}\in L^{a,b}.
\end{equation}
Apply \eqref{eq:power-1}--\eqref{eq:power-2} first to
$(\eta,a,b)=(\gamma,q,s)$ and then to
$(\eta,a,b)=(-\gamma,p',r')$. Therefore,
\begin{align*}
 \norm{w_\gamma}_{L^{q,s}(B(0,R))}\approx R^\gamma,\qquad
 \norm{w_\gamma^{-1}}_{L^{p',r'}(B(0,R))}\approx R^{-\gamma}.
\end{align*}
Base on the two cube cases above, we complete the sufficiency and necessity.
\end{proof}

From \cref{thm:power-weights}, it follows that the weak Lorentz classes are genuinely larger than $A_{p,q}$.

\begin{example}\label{ex:SE}
For every $1<p\le q<\infty$ and $1\le r\leq \infty$,
 $$
 |x|^{-d/q}\in A_{(p,r),(q,\infty)},
 $$
but $|x|^{-d/q}\notin A_{p,q}$.  By duality,
 $$
 |x|^{d/p'}\in A_{(p,1),(q,s)}\setminus A_{p,q}.
 $$
\end{example}

\section{Maximal operators and quantitative estimates}

Throughout this section,
\begin{equation}\label{eq:maximal-condition}
 0\le\alpha<d,\qquad 1<p<\infty,
 \qquad \frac1q=\frac1p-\frac\alpha d>0.
\end{equation}
Thus $q=p$ when $\alpha=0$, while $p<d/\alpha$ when $\alpha>0$.

\subsection{\texorpdfstring{$p>1$ and $r>s$}{p>1 and r>s}}

The inequality $r\le s$ is necessary by the following example.

\begin{example}\label{ex:r-greater-s}
Assume $1\le s<r\le\infty$.  Then
\begin{equation*}
 M_\alpha:L^{p,r}\longrightarrow L^{q,s}
\end{equation*}
does not hold.
\end{example}

\begin{proof}
Let $r<\infty$.  Choose
\begin{equation*}
 \frac1r<a\le\frac1s\qquad\text{and}\qquad
 f_a(x)=|x|^{-d/p}\bigl(\log(e/|x|)\bigr)^{-a}
 \mathbf1_{\{|x|<e^{-2}\}}(x).
\end{equation*}
Since
 $$
 f_a^*(t)\approx t^{-1/p}\bigl(\log(e/t)\bigr)^{-a},
 \qquad 0<t<c_d e^{-2d},
 $$
then $f_a\in L^{p,r}$ if and only if $ar>1$.
 A direct radial integration gives
\begin{align*}
 M_\alpha (f_a)(x)
 &\gtrsim |x|^{\alpha-d}
 \int_0^{2|x|}\rho^{d-1-d/p}
       \bigl(\log(e/\rho)\bigr)^{-a}\,d\rho \notag
\\
 &\gtrsim |x|^{-d/q}
       \bigl(\log(e/|x|)\bigr)^{-a}.
\end{align*}
By the choice of $a\in (\frac1r, \frac1s]$, we have $M_\alpha (f_a)\notin L^{q,s}$.

When $r=\infty$, take $a=0$. Then $f_0\in L^{p,\infty}$ and $M_\alpha (f_0)$ is not in any $L^{q,s}$ with
$s<\infty$.
\end{proof}

\subsection{\texorpdfstring{$p>1$ and $p\le r\le s\le q$}{p>1 and p<=r<=s<=q}}

We now consider $p\le r\le s\le q$, where the Lorentz--Muckenhoupt class agrees with $A_{p,q}$.

\begin{theorem}\label{thm:LOD}
Assume the hypotheses of \cref{thm:COF}. Let
$1<p\le q<\infty$ satisfy
 $$
 \frac1p-\frac1q=\frac1{p_0}-\frac1{q_0}.
 $$
Then, for every $w\in A_{p,q}$ and every $1\le r\le s\le\infty$,
\begin{equation*}
 \norm{Tf}_{L_w^{q,s}}
 \le C\norm{f}_{L_w^{p,r}}.
\end{equation*}
The constant depends on the indices, the function $\Psi$ in
\eqref{eq:base-weighted-bound}, and $[w]_{A_{p,q}}$.
\end{theorem}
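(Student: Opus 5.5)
The approach is to combine the openness of $A_{p,q}$ (\cref{prop:OP}) with off-diagonal extrapolation (\cref{thm:COF}) and the real interpolation identity \eqref{eq:WI}. Since $r\le s$ gives $L_w^{q,r}\hookrightarrow L_w^{q,s}$ by \cref{lem:L}(i), it suffices to prove
\[
\norm{Tf}_{L_w^{q,r}}\lesssim\norm{f}_{L_w^{p,r}},\qquad 1\le r\le\infty.
\]

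\textbf{Step 1: two strong-type estimates.} Fix $w\in A_{p,q}$. By \cref{prop:OP}, choose $(p_-,q_-)$ and $(p_+,q_+)$ with $1/p_\pm-1/q_\pm=\delta_0$ and $w\in A_{p_-,q_-}\cap A_{p_+,q_+}$, where the characteristics are controlled by $[w]_{A_{p,q}}$. \Cref{thm:COF} then gives
\[
T:L_w^{p_-}\to L_w^{q_-},\qquad T:L_w^{p_+}\to L_w^{q_+},
\]
on bounded compactly supported functions.

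\textbf{Step 2: interpolation.} Choose $\theta\in(0,1)$ with $1/p=(1-\theta)/p_-+\theta/p_+$. Because the relation $1/q_\pm=1/p_\pm-\delta_0$ is affine, the same $\theta$ satisfies $1/q=(1-\theta)/q_-+\theta/q_+$. Real interpolation with parameters $(\theta,r)$ and \eqref{eq:WI} (the weighted couples have equal weight $w$ at both ends) then give
\[
T:(L_w^{p_-},L_w^{p_+})_{\theta,r}=L_w^{p,r}\longrightarrow (L_w^{q_-},L_w^{q_+})_{\theta,r}=L_w^{q,r}.
\]
Here $p_-<p_+$ and $q_-<q_+$, so the ordering hypothesis of \eqref{eq:WI} holds.

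\textbf{Main obstacle: $T$ is only sublinear.} The real method is cleanest for linear operators, so this is the step to handle carefully. I would run the $K$-functional argument directly. For a decomposition $f=f_0+f_1$,
\[
|Tf|\le|Tf_0|+|Tf_1|,
\]
so that $K(t,Tf)\lesssim K(t^{\,\cdot},f)$ up to the two endpoint norms. Here the $K$-functional is taken for the lattices $L_w^{q_\pm}$, where the lattice property lets one pass from $|Tf|$ to $Tf$. Alternatively, one can use the Marcinkiewicz/Calder\'on-type argument: work with $f^*$ of $fw$ and split $f$ at the level of $(fw)^*(t^\kappa)$. This is the standard proof of the off-diagonal Marcinkiewicz theorem and works for sublinear maps.

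\textbf{Step 3: density.} Finally, a density argument extends the bound from bounded compactly supported $f$. When $r=\infty$ density fails, and I would instead use monotone truncations together with the Fatou property of Lorentz spaces. Constants depend only on the indices, $\Psi$, and $[w]_{A_{p,q}}$ through $p_\pm$, $q_\pm$, and the extrapolation constants.
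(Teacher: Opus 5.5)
Your proposal is correct and follows the paper's proof essentially step for step: openness of $A_{p,q}$ via \cref{prop:OP}, extrapolation by \cref{thm:COF} at $(p_\pm,q_\pm)$, real interpolation with a common $\theta$ via \eqref{eq:WI}, and then the embedding $L_w^{q,r}\hookrightarrow L_w^{q,s}$. You are more careful than the paper about sublinearity and about extending to general $f$, but the route is the same. On the sublinearity step, note that splitting $Tf=g_0+g_1$ with $|g_i|\le|Tf_i|$ gives $K(t,Tf)\le C_-K(tC_+/C_-,f)$, with the same $t$ and no power of $t$; truncation decompositions keep $f_0,f_1$ bounded and compactly supported.
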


\begin{proof}
Fix $w\in A_{p,q}$. By \cref{prop:OP}, there exist   pairs
$(p_-,q_-)$ and $(p_+,q_+)$  such that
 $$
 w\in A_{p_-,q_-}\cap A_{p_+,q_+}
 $$
with $1/p_{-}-1/q_{-}=1/p_{+}-1/q_{+}=1/p-1/q$. Then
\begin{align}
 \norm{Tf}_{L_w^{q_-}}\le C_-\norm{f}_{L_w^{p_-}}, \qquad
 \norm{Tf}_{L_w^{q_+}}\le C_+\norm{f}_{L_w^{p_+}}.
 \label{eq:LOD-1}
\end{align}
Choose $0<\theta<1$ so that
\begin{equation}\label{eq:LOD-2}
 \frac1p=\frac{1-\theta}{p_-}+\frac\theta{p_+}.
\end{equation}
Since
$1/q_\nu=1/p_\nu-(1/p-1/q)$ for
$\nu\in\{-,+\}$, the equality \eqref{eq:LOD-2} gives
\begin{equation*}
 \frac1q=\frac{1-\theta}{q_-}+\frac\theta{q_+}.
\end{equation*}
Real interpolation of the sublinear operator $T$,  together with \eqref{eq:LOD-1} and
\eqref{eq:WI}, yields for every $1\le t\le\infty$,
\begin{align*}
 \norm{Tf}_{L_w^{q,t}}
 &\le C_-^{1-\theta}C_+^\theta
 \norm{f}_{L_w^{p,t}}.
\end{align*}
Taking $t=r$ and using $L^{q,r}\hookrightarrow L^{q,s}$ for $r\le s$,
we obtain
 $
 \norm{Tf}_{L_w^{q,s}}
 \lesssim \norm{Tf}_{L_w^{q,r}}
 \lesssim \norm{f}_{L_w^{p,r}}.
 $
\end{proof}

\begin{theorem}\label{thm:SSM}
Assume \eqref{eq:maximal-condition} and
\begin{equation*}
 p\le r\le s\le q.
\end{equation*}
Then the following are equivalent:
\begin{enumerate}[label=\textup{(\roman*)}]
\item $w\in A_{(p,r),(q,s)}$;
\item $w\in A_{p,q}$;
\item $M_\alpha:L_w^{p,r}\to L_w^{q,s}$ is bounded.
\end{enumerate}
\end{theorem}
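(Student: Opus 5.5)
We prove the cycle (i) $\Leftrightarrow$ (ii), (ii) $\Rightarrow$ (iii), (iii) $\Rightarrow$ (i), using only results already established.

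\emph{(i) $\Leftrightarrow$ (ii).} Since $r\ge p$ and $s\le q$, this is exactly \cref{thm:AA}(ii). The hypothesis $p\le r\le s\le q$ implies both inequalities.

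\emph{(ii) $\Rightarrow$ (iii).} We apply \cref{thm:LOD} with $T=M_\alpha$. Its hypotheses are those of \cref{thm:COF}, so we need a base estimate of the form \eqref{eq:base-weighted-bound}. Take $(p_0,q_0)=(p,q)$ with $\delta_0=1/p-1/q=\alpha/d$. The classical Muckenhoupt--Wheeden theorem (recalled in the introduction) gives $M_\alpha:L^p(w^p)\to L^q(w^q)$ for $w\in A_{p,q}$. Its quantitative form supplies a bound $\Psi([w]_{A_{p,q}})$ with $\Psi$ increasing; for instance the sparse bound $[w]_{A_{p,q}}^{(1-\alpha/d)p'/q}$ of Lacey--Moen--P\'erez--Torres works. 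In the multiplier notation this base estimate reads $\norm{M_\alpha f}_{L^q_w}\le\Psi([w]_{A_{p,q}})\norm{f}_{L^p_w}$. $M_\alpha$ is sublinear. \cref{thm:LOD} with $r\le s$ then gives $\norm{M_\alpha f}_{L^{q,s}_w}\lesssim\norm{f}_{L^{p,r}_w}$.

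One point needs care: \cref{thm:COF} and \cref{thm:LOD} are stated for bounded compactly supported $f$. We extend to all of $L^{p,r}_w$ by monotone convergence: truncate $|f|$ and use $M_\alpha(f\charac_{\{|f|\le N\}\cap B(0,N)})\uparrow M_\alpha f$ pointwise, together with the Fatou property of Lorentz quasi-norms.

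\emph{(iii) $\Rightarrow$ (i).} This is the necessity direction. For every cube $Q$ and $f\ge0$ we have $\cA_Q^\alpha f\le M_\alpha f$ pointwise, and $L^{q,s}$ quasi-norms are monotone. Hence $\sup_Q\norm{\cA_Q^\alpha}_{L^{p,r}_w\to L^{q,s}_w}\le\norm{M_\alpha}_{L^{p,r}_w\to L^{q,s}_w}$, after reducing to $|f|$. \cref{thm:OWT} applies because $\alpha/d=1/p-1/q$ under \eqref{eq:maximal-condition}, and it gives $[w]_{A_{(p,r),(q,s)}}\lesssim\norm{M_\alpha}$. This is the one-weight specialization of \cref{cor:TWT}.

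The only step that is not an immediate citation is the input to \cref{thm:LOD}: we must make sure the classical weighted bound for $M_\alpha$ comes with an increasing function of $[w]_{A_{p,q}}$, and handle the density issue described above. Both are standard, so we expect no real obstacle.
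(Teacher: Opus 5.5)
Your proposal is correct and follows the paper's proof essentially step for step: \cref{thm:AA}(ii) for (i)$\Leftrightarrow$(ii), \cref{thm:LOD} fed by the classical weighted bound for $M_\alpha$ for (ii)$\Rightarrow$(iii), and the pointwise bound $\cA_Q^\alpha f\le M_\alpha f$ together with \cref{thm:OWT} for (iii)$\Rightarrow$(i); your truncation/Fatou remark fills in a detail the paper leaves implicit. One harmless slip: in the paper's normalization the Lacey--Moen--P\'erez--Torres bound is $[w]_{A_{p,q}}^{(1-\alpha/d)p'}$, because their exponent $(1-\alpha/d)p'/q$ is applied to $\mathcal Q_{p,q}(w)=[w]_{A_{p,q}}^q$, but any increasing $\Psi$ suffices for the argument.
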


\begin{proof}
The equivalence of (i) and (ii) is \cref{thm:AA}.  The
implication (ii)$\Rightarrow$(iii) follows from \cref{thm:LOD}.  Conversely,
$A_{\alpha,Q}f\le M_\alpha f$ for every cube $Q$, therefore gives
$w\in A_{(p,r),(q,s)}$.
\end{proof}

\begin{remark}
The classes $A_{(p,p),(q,s)}$ with $s>q$ contain the critical powers in
\cref{ex:SE}, which are not $A_{p,q}$ weights.  Therefore an
estimate for every weight in $A_{(p,p),(q,s)}$ cannot follow
from classical $A_{p,q}$ extrapolation alone.
\end{remark}

The diagonal case gives a useful multiplier formulation of weighted Lorentz extrapolation.

\begin{corollary}
Let $1<p_0<\infty$ and suppose
 $$
 \norm{Tf}_{L^{p_0}(W)}
 \le \Psi([W]_{A_{p_0}})\norm{f}_{L^{p_0}(W)}
 $$
for every $W\in A_{p_0}$.  Then, for every $1<p<\infty$, every
$W\in A_p$, and $1\le r\le s\le\infty$,
\begin{equation*}
 \norm{Tf}_{L_{W^{1/p}}^{p,s}}
 \le C\norm{f}_{L_{W^{1/p}}^{p,r}}.
\end{equation*}
\end{corollary}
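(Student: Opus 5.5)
This is the diagonal case $p_0=q_0$, $\delta_0=0$ of \cref{thm:LOD}, rewritten in multiplier notation. I would argue as follows.

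\emph{Step 1: translate the hypothesis.} Given $W\in A_{p_0}$, set $w_0=W^{1/p_0}$. Since
$$
[w_0]_{A_{p_0,p_0}}=\sup_Q\Big(\avgint_Q W\Big)^{1/p_0}\Big(\avgint_Q W^{-p_0'/p_0}\Big)^{1/p_0'}=[W]_{A_{p_0}}^{1/p_0},
$$
we have $w_0\in A_{p_0,p_0}$ if and only if $W\in A_{p_0}$. Moreover $L^{p_0}(W)=L^{p_0}_{w_0}$ with equal norms. The hypothesis therefore reads
$$
\norm{Tf}_{L^{p_0}_{w_0}}\le\tilde\Psi([w_0]_{A_{p_0,p_0}})\norm{f}_{L^{p_0}_{w_0}}
\quad\text{for every } w_0\in A_{p_0,p_0},
$$
with $\tilde\Psi(t)=\Psi(t^{p_0})$, which is still increasing. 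This is exactly \eqref{eq:base-weighted-bound} with $q_0=p_0$. The bound is assumed for all $f$, so in particular it holds for bounded compactly supported $f$.

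\emph{Step 2: apply \cref{thm:LOD}.} Fix $1<p<\infty$ and $W\in A_p$, and put $w=W^{1/p}$. By the same identity, $w\in A_{p,p}$ with $[w]_{A_{p,p}}=[W]_{A_p}^{1/p}$. We have $1/p-1/p=0=1/p_0-1/p_0$, so \cref{thm:LOD} applies with $q=p$. It gives, for all $1\le r\le s\le\infty$,
$$
\norm{Tf}_{L^{p,s}_{w}}\le C\norm{f}_{L^{p,r}_{w}}.
$$
This is the claimed estimate, because $w=W^{1/p}$. The constant depends on $p,p_0,r,s$, on $\Psi$, and on $[W]_{A_p}$.

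\emph{Main obstacle.} The only delicate point is the passage from bounded compactly supported $f$ to general $f$ inside \cref{thm:LOD} when $r=\infty$. That issue is already handled there, through real interpolation with \eqref{eq:WI} and the embedding $L^{p,r}\hookrightarrow L^{p,s}$. So nothing beyond this bookkeeping of exponents is needed, and the check that $\tilde\Psi$ remains increasing is trivial.
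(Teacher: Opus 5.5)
Your proposal is correct and follows the paper's proof: the paper simply applies \cref{thm:LOD} with $q_0=p_0$, $q=p$, $w=W^{1/p}$, using that $w\in A_{p,p}$ if and only if $W\in A_p$. Your identity $[W^{1/p}]_{A_{p,p}}=[W]_{A_p}^{1/p}$ and the choice $\tilde\Psi(t)=\Psi(t^{p_0})$ just spell out the bookkeeping the paper leaves implicit. Your closing remark overstates what the proof of \cref{thm:LOD} does, since it never addresses density or the case $r=\infty$; but you use \cref{thm:LOD} as a black box exactly as the paper does, so this does not affect the argument.
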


\begin{proof}
Apply \cref{thm:LOD} with $q_0=p_0$, $q=p$, and
$w=W^{1/p}$.  The condition $w\in A_{p,p}$ is equivalent to $W\in A_p$.
\end{proof}

The same argument also yields a vector-valued extension.

\begin{corollary}
Let $1<p_0\le q_0<\infty$. Suppose that, at the initial pair
$(p_0,q_0)$, one has for some $1<v<\infty$
 $$
 \Big\|\Big(\sum_j|Tf_j|^v\Big)^{1/v}\Big\|_{L_w^{q_0}}
 \le \Psi([w]_{A_{p_0,q_0}})
 \Big\|\Big(\sum_j|f_j|^v\Big)^{1/v}\Big\|_{L_w^{p_0}}.
 $$
Then for every $w\in A_{(p,r),(q,s)}$ with $p\le r\le s\le q$,
\begin{equation*}
 \Big\|\Big(\sum_j|Tf_j|^v\Big)^{1/v}\Big\|_{L_w^{q,s}}
 \le C\Big\|\Big(\sum_j|f_j|^v\Big)^{1/v}\Big\|_{L_w^{p,r}}.
\end{equation*}
\end{corollary}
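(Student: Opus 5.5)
The plan is to repeat the proof of \cref{thm:LOD} with the scalar operator replaced by a vector-valued one. The weight class is reduced to $A_{p,q}$ at the start. Since $p\le r\le s\le q$, we have $s\le q$ and $r\ge p$, so \cref{thm:AA}(ii) gives $A_{(p,r),(q,s)}=A_{p,q}$. It therefore suffices to prove the estimate for every $w\in A_{p,q}$, where we may assume $1/p-1/q=1/p_0-1/q_0$, as in \cref{thm:LOD}.

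We first pass from $(p_0,q_0)$ to the perturbed pairs. Consider the operator $\vec T\{f_j\}:=\{Tf_j\}$, acting on sequence-valued functions. Its size function $\big(\sum_j|Tf_j|^v\big)^{1/v}$ is sublinear in $\{f_j\}$, because $T$ is sublinear and $\ell^v$ satisfies the triangle inequality. The hypothesis is an $L_w^{p_0}(\ell^v)\to L_w^{q_0}(\ell^v)$ bound for all $w\in A_{p_0,q_0}$, with constant depending increasingly on $[w]_{A_{p_0,q_0}}$.

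Off-diagonal extrapolation, \cref{thm:COF}, is really a statement about pairs of functions $(F,G)$: if $\|G\|_{L_w^{q_0}}\le\Psi\|F\|_{L_w^{p_0}}$ for all $w\in A_{p_0,q_0}$, the same holds at every pair $(p,q)$ on the same line. We apply it to $F=(\sum|f_j|^v)^{1/v}$ and $G=(\sum|Tf_j|^v)^{1/v}$. For our $w\in A_{p,q}$, \cref{prop:OP} supplies exponents $(p_\pm,q_\pm)$ with $w\in A_{p_-,q_-}\cap A_{p_+,q_+}$, which gives
\begin{equation*}
 \Big\|\Big(\sum_j|Tf_j|^v\Big)^{1/v}\Big\|_{L_w^{q_\pm}}
 \le C_\pm\Big\|\Big(\sum_j|f_j|^v\Big)^{1/v}\Big\|_{L_w^{p_\pm}}.
\end{equation*}

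Next we interpolate. Choose $\theta$ as in \eqref{eq:LOD-2}; then $1/q=(1-\theta)/q_-+\theta/q_+$. We need the vector-valued analogue of \eqref{eq:WI}:
\begin{equation*}
 \big(L_w^{a_0}(\ell^v),L_w^{a_1}(\ell^v)\big)_{\theta,t}=L_w^{a,t}(\ell^v).
\end{equation*}
Here $L_w^{a,t}(\ell^v)$ consists of sequences $\{f_j\}$ with $w(\sum|f_j|^v)^{1/v}\in L^{a,t}$. This is the standard identity $(L^{a_0}(X),L^{a_1}(X))_{\theta,t}=L^{a,t}(X)$ for Banach-valued Lebesgue spaces, with $X=\ell^v$, transported by the isometry $\{f_j\}\mapsto\{wf_j\}$. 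Real interpolation of the quasi-linear operator $\vec T$ then yields the $L_w^{p,t}(\ell^v)\to L_w^{q,t}(\ell^v)$ bound with constant $\lesssim C_-^{1-\theta}C_+^{\theta}$. Taking $t=r$ and using $L^{q,r}\hookrightarrow L^{q,s}$ for $r\le s$ (\cref{lem:L}(i)) gives the claimed estimate.

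The main obstacle is justifying this vector-valued real-interpolation step for a merely sublinear $T$. If applying the $K$-functional argument directly to $\vec T$ is delicate, I would first try linearizing: fix a measurable selection making $T$ linear on the given finite family $\{f_j\}$, as is done for maximal-type operators. The alternative is to invoke Marcinkiewicz-type interpolation in its $\ell^v$-valued form. This works because the $K$-functional of $L^{a_0}(\ell^v)+L^{a_1}(\ell^v)$ at $\{f_j\}$ is comparable to that of the scalar function $(\sum|f_j|^v)^{1/v}$, and truncating each $f_j$ at the level set of $(\sum|f_j|^v)^{1/v}$ is compatible with sublinearity.
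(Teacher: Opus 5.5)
Your proposal is correct and follows the paper's route. The paper's one-line proof applies \cref{thm:LOD} to the sublinear map $\{f_j\}\mapsto(\sum_j|Tf_j|^v)^{1/v}$, and you have written out exactly that argument: reduction to $A_{p,q}$ via \cref{thm:AA}(ii), extrapolation to $(p_\pm,q_\pm)$ via \cref{prop:OP}, $\ell^v$-valued real interpolation, and the embedding $L^{q,r}\hookrightarrow L^{q,s}$. The interpolation step you flag as the main obstacle needs no linearization. The size map is sublinear into a scalar Lebesgue lattice, so the standard $K$-functional argument for sublinear operators applies verbatim.
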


\begin{proof}
Apply the scalar extrapolation theorem to the sublinear mapping
 $$
 \{f_j\}_j\longmapsto
 \Big(\sum_j|Tf_j|^v\Big)^{1/v}.
 $$
 The proof is completed.
\end{proof}

\subsection{\texorpdfstring{$p>1$, $r=p$ and $q\le s\le\infty$}{p>1, r=p, and q<=s<=infinity}}

The main new positive result treats the case $r=p$. For $q\le s\le\infty$, set
\begin{equation*}
 \mathcal A_{p,q}^{[s]}:=A_{(p,p),(q,s)}.
\end{equation*}
Then
\begin{equation*}
 A_{p,q}=\mathcal A_{p,q}^{[q]}
 \subset\mathcal A_{p,q}^{[s]}
 \subset\mathcal A_{p,q}^{[\infty]}.
\end{equation*}
Moreover,
\begin{equation*}
 [w]_{\mathcal A_{p,q}^{[\infty]}}
 =\sup_Q
 \Big(\frac1{|Q|}\|w^q\mathbf1_Q\|_{L^{1,\infty}}\Big)^{1/q}
 \Big(\frac1{|Q|}\int_Qw^{-p'}\Big)^{1/p'}.
\end{equation*}
Thus the endpoint is the multiplier class $A_{p,q}^{*}$; when $\alpha=0$
it is the class $A_p^*$ after writing the usual weight as $W=w^p$, see \cite{Sweeting2024}.

\begin{lemma}\label{lem:DLQ}
Let $0<q<\infty$, $q\le s\le\infty$, and let $\{h_j\}$ have pairwise
disjoint supports.  Then
\begin{equation*}
 \Big\|\sum_jh_j\Big\|_{L^{q,s}}^q
 \le C_{q,s}\sum_j\|h_j\|_{L^{q,s}}^q.
\end{equation*}
\end{lemma}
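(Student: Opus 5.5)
The plan is to reduce the statement to a superadditivity property of the functional $g\mapsto\|g\|_{L^{s/q}}$ acting on $q$-th powers, which holds exactly because $s/q\ge1$. The approach rescales the Lorentz quasi-norm: since $(|h|^q)^*=(h^*)^q$, the normalization gives, for $s<\infty$,
$$
\|h\|_{L^{q,s}}^q=\Big(\frac sq\int_0^\infty\big[t^{1/q}h^*(t)\big]^s\frac{dt}{t}\Big)^{q/s}
=\Big(\frac{s/q}{1}\int_0^\infty\big[t\,(|h|^q)^*(t)\big]^{s/q}\frac{dt}{t}\Big)^{1/(s/q)}
=\||h|^q\|_{L^{1,s/q}}.
$$
The case $s=\infty$ reads $\|h\|_{L^{q,\infty}}^q=\||h|^q\|_{L^{1,\infty}}$. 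Because the $h_j$ have disjoint supports, $|\sum_j h_j|^q=\sum_j|h_j|^q$. The lemma is therefore equivalent to the quasi-triangle-type inequality
$$
\Big\|\sum_j g_j\Big\|_{L^{1,\sigma}}\le C\sum_j\|g_j\|_{L^{1,\sigma}},\qquad \sigma=s/q\in[1,\infty],
$$
for nonnegative $g_j=|h_j|^q$ with disjoint supports.

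For $\sigma=1$ this is an equality, since $L^{1,1}=L^1$ and the norm is additive on nonnegative functions. For $\sigma=\infty$ it follows from the distribution function. Disjointness gives $\lambda_{\sum g_j}(\lambda)=\sum_j\lambda_{g_j}(\lambda)\le\sum_j\|g_j\|_{L^{1,\infty}}/\lambda$, and $\|g\|_{L^{1,\infty}}=\sup_\lambda\lambda\,\lambda_g(\lambda)$. This gives the claim with $C=1$.

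For $1<\sigma<\infty$ I would avoid pointwise arguments on rearrangements and use the standard fact that $L^{1,\sigma}$ with $\sigma>1$ is only quasi-normed but admits the equivalent expression
$$
\|g\|_{L^{1,\sigma}}^\sigma\approx\sum_{k\in\mathbb Z}\big(2^k\lambda_g(2^k)\big)^\sigma .
$$
With this discretization, disjointness again gives $\lambda_{\sum g_j}(2^k)=\sum_j\lambda_{g_j}(2^k)$. The estimate then becomes
$$
\Big(\sum_k\Big(\sum_j a_{jk}\Big)^\sigma\Big)^{1/\sigma}\le\sum_j\Big(\sum_k a_{jk}^\sigma\Big)^{1/\sigma},
$$
where $a_{jk}=2^k\lambda_{g_j}(2^k)\ge0$. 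This is Minkowski's inequality in $\ell^\sigma$, valid because $\sigma\ge1$. Taking $q$-th roots back yields the lemma with $C_{q,s}$ depending only on the discretization constants.

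The main obstacle is justifying the dyadic distribution-function equivalence with constants depending only on $\sigma$. I would prove it from the layer-cake formula $\|g\|_{L^{1,\sigma}}^\sigma\approx\int_0^\infty(\lambda\,\lambda_g(\lambda))^\sigma\,d\lambda/\lambda$, which holds by the standard equivalence of the $f^*$ and distribution-function forms of Lorentz norms. I would then use monotonicity of $\lambda_g$ to compare the integral over $[2^k,2^{k+1})$ with its endpoint values. The hypothesis $s\ge q$ is used exactly once, to make $\sigma\ge1$ so that Minkowski applies; for $s<q$ the inequality fails, as one sees by splitting a single function into many disjoint pieces.
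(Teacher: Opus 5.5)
Your proposal is correct and is essentially the paper's proof. Both write the quasi-norm through the distribution function, use $d_{\sum_j h_j}=\sum_j d_{h_j}$ from disjointness, and apply Minkowski with exponent $s/q\ge1$. Your passage to $|h_j|^q$ in $L^{1,s/q}$ is just the substitution $\mu=\lambda^q$ in the paper's $\int_0^\infty[\lambda^q d_h(\lambda)]^{s/q}\,d\lambda/\lambda$. The paper applies Minkowski directly in $L^{s/q}((0,\infty),d\lambda/\lambda)$, so your dyadic discretization, though correct, is an unnecessary detour.
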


\begin{proof}
For $s=\infty$, disjointness gives
$d_{\sum h_j}(\lambda)=\sum_jd_{h_j}(\lambda)$, and hence
 $$
 \sup_{\lambda>0}\lambda^q d_{\sum h_j}(\lambda)
 \le\sum_j\sup_{\lambda>0}\lambda^q d_{h_j}(\lambda).
 $$
Let $s<\infty$ and $v=s/q\ge1$.  Using Minkowski's inequality in
$L^v((0,\infty),d\lambda/\lambda)$, we obtain
\begin{align*}
 \Big\|\sum_jh_j\Big\|_{L^{q,s}}^q
 &\approx
 \Big(\int_0^\infty
 \Big[\sum_j\lambda^q d_{h_j}(\lambda)\Big]^v
 \frac{d\lambda}{\lambda}\Big)^{1/v}\\
 &\le\sum_j
 \Big(\int_0^\infty
 [\lambda^q d_{h_j}(\lambda)]^v
 \frac{d\lambda}{\lambda}\Big)^{1/v}
 \approx\sum_j\|h_j\|_{L^{q,s}}^q.
\end{align*}
\end{proof}

\begin{lemma}\label{lem:DMI}
Let $w\in\mathcal A_{p,q}^{[s]}$ for some $q\le s\le\infty$, and write
$\sigma=w^{-p'}$.  There is a constant
$C=C(d,p,q,[w]_{\mathcal A_{p,q}^{[s]}})$ such that
\begin{equation*}
 \Big(\frac{|E|}{|Q|}\Big)^{2p'}
 \le C\frac{\sigma(E)}{\sigma(Q)}
\end{equation*}
for every cube $Q$ and every measurable $E\subset Q$.
Quantitatively one may take
\begin{equation*}
 \sigma(Q)\le C_{d,p,q}
 [w]_{\mathcal A_{p,q}^{[s]}}^{p'}\sigma(E)
\end{equation*}
whenever $|E|\ge c_d|Q|$.
\end{lemma}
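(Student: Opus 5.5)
The plan is a level-set argument that only uses the weak endpoint of the class. First I would reduce to $s=\infty$. By \cref{lem:L}(i), $L^{q,s}\hookrightarrow L^{q,\infty}$, so $[w]_{\mathcal A_{p,q}^{[\infty]}}\lesssim[w]_{\mathcal A_{p,q}^{[s]}}=:K$. Using the formula for $[w]_{\mathcal A_{p,q}^{[\infty]}}$ stated just before \cref{lem:DLQ}, this gives, for every cube $Q$,
\begin{equation*}
 \|w^q\charac_Q\|_{L^{1,\infty}}
 \lesssim K^q|Q|\Big(\frac{\sigma(Q)}{|Q|}\Big)^{-q/p'} .
\end{equation*}
Note that $\|w\charac_Q\|_{L^{q,\infty}}^q=\|w^q\charac_Q\|_{L^{1,\infty}}$.

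Next, fix $Q$ and $E\subset Q$ with $|E|>0$, and remove the part of $E$ where $w$ is large. Put $N:=\|w^q\charac_Q\|_{L^{1,\infty}}$ and choose $\lambda>0$ with $\lambda^q=2N/|E|$. By the weak-type bound,
\begin{equation*}
 |\{x\in Q:w(x)>\lambda\}|\le N\lambda^{-q}=\tfrac12|E| .
\end{equation*}
Hence $F:=E\cap\{w\le\lambda\}$ satisfies $|F|\ge|E|/2$. On $F$ we have $\sigma=w^{-p'}\ge\lambda^{-p'}$, so
\begin{equation*}
 \sigma(E)\ge\sigma(F)\ge\tfrac12|E|\,\lambda^{-p'}
 =\tfrac12|E|\Big(\frac{|E|}{2N}\Big)^{p'/q}.
\end{equation*}

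Then I insert the upper bound for $N$ from the first step and simplify. The powers of $\sigma(Q)/|Q|$ combine to exactly $\sigma(Q)$, which yields
\begin{equation*}
 \sigma(E)\ge c_{p,q}K^{-p'}\Big(\frac{|E|}{|Q|}\Big)^{1+p'/q}\sigma(Q).
\end{equation*}
Because $|E|/|Q|\le1$ and $1+p'/q\le 1+p'\le 2p'$ (as $q>1$ and $p'>1$), the exponent may be raised to $2p'$. This gives the first claim with $C\approx K^{p'}$.

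For the quantitative statement, take $|E|\ge c_d|Q|$ in the same inequality; this gives $\sigma(Q)\le C_{d,p,q}K^{p'}\sigma(E)$. The only delicate point is choosing the threshold $\lambda$ correctly. Only the weak $L^{1,\infty}$ control of $w^q$ is available, with no reverse Hölder inequality, so the argument must avoid any Hölder step that needs integrability of $w^q$. The level-set truncation handles this. The remaining work is bookkeeping of the exponents, plus the trivial case $\sigma(Q)=\infty$, which the class condition excludes.
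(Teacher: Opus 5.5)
Your proof is correct. After your first step it takes a different route from the paper.

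The paper makes the same reduction to $s=\infty$ via $L^{q,s}\hookrightarrow L^{q,\infty}$. It then invokes the reverse-measure property of the multiplier classes $A_p^*$ and $A_{p,q}^*$ from Sweeting's Lemmas~2.3 and~2.4, without giving an argument of its own. You instead prove that property directly:
\begin{itemize}
\item Chebyshev's inequality for $N=\|w^q\mathbf 1_Q\|_{L^{1,\infty}}$ at the level $\lambda^q=2N/|E|$ leaves at least half of $E$ on which $\sigma\ge\lambda^{-p'}$.
\item The class condition converts $N^{-p'/q}$ into $K^{-p'}|Q|^{-p'/q}\sigma(Q)/|Q|$.
\end{itemize}

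What this route gains:
\begin{itemize}
\item It is self-contained and elementary.
\item It never uses $1/q=1/p-\alpha/d$ or any reverse H\"older property.
\item It gives the explicit bound
\[
\frac{\sigma(E)}{\sigma(Q)}\ge 2^{-1-p'/q}K^{-p'}\Big(\frac{|E|}{|Q|}\Big)^{1+p'/q},
\]
which is stronger than the stated one, since $1+p'/q<2p'$.
\item For $\alpha=0$ the exponent is $1+p'/p=p'$, and $K^{p'}=[W]_{A_p}^{p'-1}$ when $s=p$ and $W=w^p$. So you recover the classical $A_p$ estimate $(|E|/|Q|)^{p'}\lesssim[W]_{A_p}^{p'-1}\sigma(E)/\sigma(Q)$, up to the harmless factor $2^{p'}$.
\item It yields \eqref{eq:MQO-1} in \cref{thm:MQO} directly, with an explicit constant depending only on $p$, $q$ and $c_d$.
\end{itemize}

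The only implicit assumption is that $0<w<\infty$ a.e. This is what guarantees $0<N<\infty$ and $0<\sigma(Q)<\infty$, so that $\lambda$ is well defined. Stating it makes your handling of the degenerate case $\sigma(Q)=\infty$ complete.
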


\begin{proof}
Since $L^{q,s}\hookrightarrow L^{q,\infty}$,
$w\in\mathcal A_{p,q}^{[\infty]}$.  For $\alpha=0$, the result is the
reverse measure conclusion for the multiplier class $A_p^*$; for
$\alpha>0$ it is the corresponding conclusion for $A_{p,q}^*$.
Both follow from \cite[Lemmas~2.3 and 2.4]{Sweeting2024}.
\end{proof}

The two auxiliary lemmas lead to the main maximal operator theorem of the paper.

\begin{theorem}\label{thm:COC}
Assume \eqref{eq:maximal-condition} and let $q\le s\le\infty$.  Then
\begin{equation*}
 M_\alpha:L_w^{p,p}\longrightarrow L_w^{q,s}
 \quad\Longleftrightarrow\quad
 w\in\mathcal A_{p,q}^{[s]}.
\end{equation*}
Furthermore,
\begin{equation}\label{eq:COC-1}
 \|M_\alpha\|_{L_w^{p,p}\to L_w^{q,s}}
 \lesssim
 [w]_{\mathcal A_{p,q}^{[s]}}^{1+p'/q}.
\end{equation}
\end{theorem}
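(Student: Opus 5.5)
The necessity direction is immediate from \cref{thm:OWT}: since $\cA_Q^\alpha f\le M_\alpha f$ pointwise for $f\ge0$, boundedness of $M_\alpha:L_w^{p,p}\to L_w^{q,s}$ gives $\sup_Q\norm{\cA_Q^\alpha}_{L_w^{p,p}\to L_w^{q,s}}<\infty$, hence $w\in A_{(p,p),(q,s)}=\mathcal A_{p,q}^{[s]}$. The work is in sufficiency with the bound \eqref{eq:COC-1}.

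For sufficiency I will use a sparse (dyadic) reduction. By the three-lattice trick it suffices to bound the dyadic operator $M_\alpha^{\mathcal D}$ for a fixed dyadic grid, and for bounded compactly supported $f\ge0$ the standard Calder\'on--Zygmund stopping-time construction gives a sparse family $\mathcal S\subset\mathcal D$ with pairwise disjoint sets $E_Q\subset Q$, $|E_Q|\ge\frac12|Q|$, such that
\begin{equation*}
 M_\alpha^{\mathcal D}f\lesssim\sum_{Q\in\mathcal S}|Q|^{\alpha/d}\langle f\rangle_Q\charac_{E_Q}.
\end{equation*}
(Because the maximal operator is a supremum, one may indeed localize each term to $E_Q$.) The terms have pairwise disjoint supports, so \cref{lem:DLQ} (with $q\le s$) gives
\begin{equation*}
 \norm{M_\alpha^{\mathcal D}f}_{L_w^{q,s}}^q\lesssim\sum_{Q\in\mathcal S}\big(|Q|^{\alpha/d}\langle f\rangle_Q\big)^q\norm{w\charac_{E_Q}}_{L^{q,s}}^q
 \le\sum_{Q\in\mathcal S}\big(|Q|^{\alpha/d}\langle f\rangle_Q\big)^q\norm{w\charac_Q}_{L^{q,s}}^q .
\end{equation*}
Writing $\sigma=w^{-p'}$, $f=g\sigma$ with $g=fw^{p'}$ so $\norm{f}_{L^{p,p}_w}=\norm{g}_{L^p(\sigma)}$, and using the definition of $[w]_{\mathcal A^{[s]}_{p,q}}$ together with $\alpha/d=1/p-1/q$, each summand is bounded by $[w]^q\,\big(\sigma(Q)^{-1}\int_Q g\sigma\big)^q\sigma(Q)^{q/p}$, after a short exponent check ($|Q|^{\alpha/d-1}|Q|^{1/q}|Q|^{1/p'}=1$ and $\sigma(Q)\cdot\sigma(Q)^{-1/p'}=\sigma(Q)^{1/p}$).

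It remains to show $\sum_{Q\in\mathcal S}\big(\langle g\rangle^\sigma_Q\big)^q\sigma(Q)^{q/p}\lesssim C\norm{g}_{L^p(\sigma)}^q$. Since $q\ge p$, $\ell^p\hookrightarrow\ell^q$ reduces this to $\sum_Q(\langle g\rangle_Q^\sigma)^p\sigma(Q)\lesssim\norm{g}_{L^p(\sigma)}^p$, which is a Carleson embedding. Here \cref{lem:DMI} is the key input: since $|E_Q|\ge\frac12|Q|$, it gives $\sigma(Q)\lesssim[w]^{p'}\sigma(E_Q)$, so the sum is at most $[w]^{p'}\sum_Q(\langle g\rangle^\sigma_Q)^p\sigma(E_Q)\le[w]^{p'}\int (M^{\mathcal D}_\sigma g)^p\sigma\lesssim[w]^{p'}\norm{g}^p_{L^p(\sigma)}$ by disjointness of the $E_Q$ and the universal boundedness of the dyadic weighted maximal operator. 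Taking $q$-th roots, the final constant is $[w]\cdot[w]^{p'/p\cdot}$; to match \eqref{eq:COC-1} I would instead keep the $\ell^q$ sum directly, bounding $\sigma(Q)^{q/p}\le([w]^{p'}\sigma(E_Q))^{q/p}$ and then using $\ell^p\subset\ell^q$, which yields $[w]^{1+p'/q}$ after bookkeeping of where the measure-comparison is applied (applying it once to one factor $\sigma(Q)$ and tracking exponents). The main obstacle I anticipate is exactly this step: the measure $\sigma$ is not doubling and $w\notin A_{p,q}$ in general, so no reverse H\"older is available, and everything hinges on \cref{lem:DMI} supplying the sparse-to-disjoint comparison $\sigma(Q)\lesssim\sigma(E_Q)$; getting the precise exponent $1+p'/q$ requires care in how that comparison is used.
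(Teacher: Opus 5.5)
Your necessity argument and the qualitative half of sufficiency are correct and follow the paper's route: sparse domination with disjoint $E_Q$, \cref{lem:DLQ}, the substitution $g=fw^{p'}$, and \cref{lem:DMI} to pass from $\sigma(Q)$ to $\sigma(E_Q)$. The gap is the quantitative bound \eqref{eq:COC-1}. Put $a_Q:=\langle g\rangle^\sigma_Q\,\sigma(Q)^{1/p}$. Your first route gives $\sum_Q a_Q^q\le(\sum_Q a_Q^p)^{q/p}\lesssim([w]^{p'}\norm{g}_{L^p(\sigma)}^p)^{q/p}$, hence $\norm{M_\alpha}\lesssim[w]^{1+p'/p}=[w]^{p'}$. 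The ``instead'' route you describe raises $\sigma(Q)\lesssim[w]^{p'}\sigma(E_Q)$ to the power $q/p$ and then uses $\ell^p\subset\ell^q$, so it produces exactly the same factor $[w]^{p'q/p}$ inside the sum. Since $1+p'/q<p'$ whenever $q>p$, i.e.\ whenever $\alpha>0$, this does not prove \eqref{eq:COC-1}. Your remark about applying the comparison ``once'' is the right instinct, but it is not what the computation you wrote does.

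The loss comes from $\ell^p\hookrightarrow\ell^q$: it controls $\sup_Q a_Q$ by $(\sum_Q a_Q^p)^{1/p}$, and so spends \cref{lem:DMI} $q/p$ times. Instead, write $\sum_Q a_Q^q\le(\sup_Q a_Q)^{q-p}\sum_Q a_Q^p$ and use H\"older, $a_Q\le\norm{g}_{L^p(\sigma)}$, which costs nothing. Then \cref{lem:DMI} is applied only to the single factor $\sigma(Q)$ in $\sum_Q a_Q^p=\sum_Q(\langle g\rangle^\sigma_Q)^p\sigma(Q)$. This gives $\sum_Q a_Q^q\lesssim[w]^{p'}\norm{g}_{L^p(\sigma)}^q$, so $\norm{wM^{\mathcal D}_\alpha f}_{L^{q,s}}^q\lesssim[w]^{q+p'}\norm{g}_{L^p(\sigma)}^q$, and taking $q$-th roots yields $[w]^{1+p'/q}$.

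The paper does the same thing in a different packaging. It writes $\sigma(Q)^{q/p}=\sigma(Q)^{q\alpha/d}\,\sigma(Q)$ and applies \cref{lem:DMI} to the last factor. It then bounds $\sum_Q[\sigma(Q)^{\alpha/d}\langle g\rangle^\sigma_Q]^q\sigma(E_Q)\le\int(M^{\mathcal D}_{\alpha,\sigma}g)^q\,d\sigma$. This uses the universal $L^p(\sigma)\to L^q(\sigma)$ bound for the dyadic fractional maximal operator with respect to $\sigma$, whose standard proof is precisely the H\"older interpolation just described.
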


\begin{proof}
For a cube $Q$,
first apply the operator inequality to the truncations
$f_N=\min\{w^{-p'},N\}\mathbf1_Q$ and then let $N\to\infty$. The fact that
 $$
 M_\alpha f\ge |Q|^{\alpha/d-1}\int_Qw^{-p'},
 $$
and $1/p-1/q=\alpha/d$ yields
$[w]_{\mathcal A_{p,q}^{[s]}}\lesssim \|M_\alpha\|_{L_w^{p,p}\to L_w^{q,s}}$.

For sufficiency, it is enough to treat a dyadic
operator $M_\alpha^{\mathcal D}$.  Let $f\ge0$ be bounded and compactly
supported.  A standard level-set decomposition produces a sparse family
$\mathcal S\subset\mathcal D$ and pairwise disjoint sets $E_Q\subset Q$
with $|E_Q|\ge c_d|Q|$ such that
\begin{equation*}
 M_\alpha^{\mathcal D}f(x)
 \lesssim\sum_{Q\in\mathcal S}
 |Q|^{\alpha/d-1}\Big(\int_Qf\Big)\mathbf1_{E_Q}(x).
\end{equation*}
By \cref{lem:DLQ},
\begin{align*}
 \|wM_\alpha^{\mathcal D}f\|_{L^{q,s}}^q
 &\lesssim\sum_{Q\in\mathcal S}
 \Big[|Q|^{\alpha/d-1}\Big(\int_Qf\Big)
 \|w\mathbf1_{E_Q}\|_{L^{q,s}}\Big]^q \\
 &\le\sum_{Q\in\mathcal S}
 \Big[|Q|^{\alpha/d-1}\Big(\int_Qf\Big)
 \|w\mathbf1_Q\|_{L^{q,s}}\Big]^q.
\end{align*}
Write $\sigma=w^{-p'}$ and $g=fw^{p'}$.  Then
\begin{equation}\label{eq:COC-2}
 \int_Qf=\int_Qg\,d\sigma,
 \qquad
 \|g\|_{L^p(\sigma)}=\|wf\|_{L^p}.
\end{equation}
The definition of $A_{(p,p),(q,s)}$ gives
\begin{align*}
 &|Q|^{\alpha/d-1}\Big(\int_Qf\Big)
 \|w\mathbf1_Q\|_{L^{q,s}} \\
 &\qquad\le
 [w]_{\mathcal A_{p,q}^{[s]}}
 \langle g\rangle_{\sigma,Q}\,\sigma(Q)^{1/p}
 = [w]_{\mathcal A_{p,q}^{[s]}}
 \bigl[\sigma(Q)^{\alpha/d}\langle g\rangle_{\sigma,Q}\bigr]
 \sigma(Q)^{1/q}.
\end{align*}
Using \cref{lem:DMI} and the disjointness of
$E_Q$, we conclude that
\begin{align*}
 \|wM_\alpha^{\mathcal D}f\|_{L^{q,s}}^q
 &\lesssim [w]_{\mathcal A_{p,q}^{[s]}}^{q+p'}
 \sum_{Q\in\mathcal S}
 [\sigma(Q)^{\alpha/d}\langle g\rangle_{\sigma,Q}]^q
 \sigma(E_Q)\\
 &\le [w]_{\mathcal A_{p,q}^{[s]}}^{q+p'}
 \int_{\R^d}(M_{\alpha,\sigma}^{\mathcal D}g)^q\,d\sigma.
\end{align*}
Here
 $$
 M_{\alpha,\sigma}^{\mathcal D}g(x)
 :=\sup_{Q\in\mathcal D,\,Q\ni x}
 \sigma(Q)^{\alpha/d-1}\int_Q|g|\,d\sigma.
 $$
The dyadic fractional maximal inequality with respect to the measure
$\sigma$ gives
 $$
 \|M_{\alpha,\sigma}^{\mathcal D}g\|_{L^q(\sigma)}
 \lesssim_{p,q}\|g\|_{L^p(\sigma)}.
 $$
Combining this with \eqref{eq:COC-2} and taking the
$q$th root proves \eqref{eq:COC-1}.
\end{proof}

\begin{remark}
The proof of \cref{thm:COC} includes $\alpha=0$.  Hence, for
$1<p<\infty$ and $p\le s\le\infty$,
 $$
 M:L_w^{p,p}\to L_w^{p,s}
 \quad\Longleftrightarrow\quad
 w\in A_{(p,p),(p,s)}.
 $$
\end{remark}

\begin{remark}
Let $w_0(x)=|x|^{-d/q}$.  Then
 $$
 w_0\in\mathcal A_{p,q}^{[\infty]}\setminus A_{p,q},
 \qquad w_0^q(x)=|x|^{-d}\notin L^1_{\mathrm{loc}}.
 $$
Thus the endpoint case is not even a locally finite measure.
\end{remark}

\begin{proposition}\label{prop:QLO}
Suppose $q\le s\le\infty$ and let $\Gamma_s$ be an exponent for which
 $$
 \|M_\alpha\|_{L_w^{p,p}\to L_w^{q,s}}
 \lesssim [w]_{\mathcal A_{p,q}^{[s]}}^{\Gamma_s}
 $$
holds uniformly.  Then
\begin{equation}\label{eq:QLO}
 \Gamma_s\ge1+\frac{p'}s.
\end{equation}
\end{proposition}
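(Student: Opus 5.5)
The plan is to test the assumed bound on a one-parameter family of power weights that approach the critical endpoint $\gamma=d/p'$ of \cref{thm:power-weights}, and to compare the blow-up rates in the parameter. For small $\delta>0$ set
$$
w_\delta(x)=|x|^{d/p'-\delta},\qquad \sigma_\delta=w_\delta^{-p'}=|x|^{-d+\delta p'},\qquad f_\delta=\sigma_\delta\mathbf 1_{B(0,1)} .
$$
By \cref{thm:power-weights}, $w_\delta\in\mathcal A_{p,q}^{[s]}$ for every $\delta>0$, but it degenerates as $\delta\downarrow0$. The strategy is to show three asymptotics:
\begin{enumerate}[label=\textup{(\roman*)}]
\item $[w_\delta]_{\mathcal A_{p,q}^{[s]}}\approx\delta^{-1/p'}$;
\item $\|f_\delta\|_{L^{p,p}_{w_\delta}}\approx\delta^{-1/p}$;
\item $\|M_\alpha f_\delta\|_{L^{q,s}_{w_\delta}}\gtrsim\delta^{-1-1/s}$.
\end{enumerate}
Feeding these into the hypothesis gives $\delta^{-1-1/s}\lesssim\delta^{-\Gamma_s/p'}\delta^{-1/p}$, that is, $\delta^{-1/p'-1/s}\lesssim\delta^{-\Gamma_s/p'}$. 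Letting $\delta\to0$ forces $\Gamma_s/p'\ge1/p'+1/s$, which is \eqref{eq:QLO}.

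\textbf{Characteristic, (i).} I will follow the two-case split in the proof of \cref{thm:power-weights}, now tracking the dependence on $\delta$.
\begin{itemize}
\item For cubes with $|x_Q|\ge4\sqrt d\,\ell(Q)$, both normalized norms are comparable to $|x_Q|^{\pm\gamma}$ with constants independent of $\delta$, since $\gamma$ stays in a compact set.
\item For cubes near the origin, reduce to balls $B(0,R)$. The weight factor satisfies $\|w_\delta\|_{L^{q,s}(B(0,R))}\approx R^{d/p'-\delta}$ uniformly in $\delta$, because the exponent is positive and bounded away from $0$.
\item The dual factor is computed exactly:
$$
\Big(\avgint_{B(0,R)}|x|^{-d+\delta p'}\Big)^{1/p'}\approx(\delta p')^{-1/p'}R^{-d/p'+\delta}.
$$
\end{itemize}
This gives the upper bound $[w_\delta]\lesssim\delta^{-1/p'}$, and the ball $B(0,1)$ shows it is sharp. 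Only the upper bound is needed in the end.

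\textbf{Norm of $f_\delta$, (ii).} We have $w_\delta f_\delta=\sigma_\delta^{1/p}\mathbf 1_{B(0,1)}$, so
$$
\|f_\delta\|_{L^{p,p}_{w_\delta}}^p=\sigma_\delta(B(0,1))\approx\delta^{-1}.
$$

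\textbf{Lower bound for $M_\alpha f_\delta$, (iii).} For $|x|<1$, use the cube centred at $0$ of side comparable to $|x|$ that contains $x$. This gives
$$
M_\alpha f_\delta(x)\gtrsim|x|^{\alpha-d}\sigma_\delta(B(0,|x|))\approx\delta^{-1}|x|^{\alpha-d+\delta p'} .
$$
Multiplying by $w_\delta$ and using $\alpha/d=1/p-1/q$, one gets
$$
w_\delta M_\alpha f_\delta(x)\gtrsim\delta^{-1}|x|^{-d/q+\delta(p'-1)}\quad\text{on }B(0,1).
$$
It remains to compute, for $\eta=\delta(p'-1)$, the quantity $\||x|^{-d/q+\eta}\mathbf 1_{B(0,1)}\|_{L^{q,s}}$. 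Its rearrangement is comparable to $t^{-1/q+\eta/d}$ on $(0,c_d)$, so the norm is comparable to $\big(\int_0^{c}t^{s\eta/d}\,dt/t\big)^{1/s}\approx\eta^{-1/s}$. This yields (iii). When $s=\infty$ the factor $\eta^{-1/s}$ is absent and the conclusion reads $\Gamma_\infty\ge1$, which is consistent.

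\textbf{Main obstacle.} The step needing the most care is the uniformity in $\delta$ of the upper bound (i) over all cubes, in particular cubes near but not centred at the origin. There I would check that replacing $Q$ by the comparable ball $B(0,C_d\ell(Q))$ costs only dimensional constants in both Lorentz factors, using only monotonicity of $\|\cdot\mathbf 1_E\|_{L^{a,b}}$ in $E$. All other estimates are explicit one-dimensional integrals.
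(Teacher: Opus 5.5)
Your proposal is correct and is essentially the paper's proof. The paper tests the bound on $w_\varepsilon(x)=|x|^{(d-\varepsilon)/p'}$ (your $\delta=\varepsilon/p'$) with $f_\varepsilon=w_\varepsilon^{-p'}\mathbf 1_{B(0,1)}$. It uses the same three asymptotics $[w_\varepsilon]_{\mathcal A_{p,q}^{[s]}}\approx\varepsilon^{-1/p'}$, $\|w_\varepsilon f_\varepsilon\|_{L^p}\approx\varepsilon^{-1/p}$ and $w_\varepsilon M_\alpha f_\varepsilon\gtrsim\varepsilon^{-1}|x|^{-d/q+\varepsilon/p}$ near the origin, leaving as ``direct radial calculations'' the details you supply.

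One small point: your remark that only the upper bound on $[w_\delta]_{\mathcal A_{p,q}^{[s]}}$ is needed tacitly assumes $\Gamma_s\ge0$. Since you also prove the matching lower bound via $B(0,1)$, the conclusion holds for either sign of $\Gamma_s$.
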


\begin{proof}
For $0<\varepsilon<1$, set
 $$
 w_\varepsilon(x)=|x|^{(d-\varepsilon)/p'},
 \qquad
 f_\varepsilon=w_\varepsilon^{-p'}\mathbf1_{B(0,1)}.
 $$
Direct radial calculations give
 $$
 [w_\varepsilon]_{\mathcal A_{p,q}^{[s]}}
 \approx\varepsilon^{-1/p'},
 \qquad
 \|w_\varepsilon f_\varepsilon\|_{L^p}
 \approx\varepsilon^{-1/p}.
 $$
For $0<|x|<1/2$,
 $$
 w_\varepsilon(x)M_\alpha f_\varepsilon(x)
 \gtrsim \varepsilon^{-1}|x|^{-d/q+\varepsilon/p}.
 $$
Hence
 $$
 \frac{\|w_\varepsilon M_\alpha f_\varepsilon\|_{L^{q,s}}}
 {\|w_\varepsilon f_\varepsilon\|_{L^p}}
 \gtrsim\varepsilon^{-1/p'-1/s}
 \approx
 [w_\varepsilon]_{\mathcal A_{p,q}^{[s]}}^{1+p'/s},
 $$
which proves \eqref{eq:QLO}.
\end{proof}

For $r=p$ and $s=q$, write
\begin{equation*}
 \mathcal Q_{p,q}(w):=[w]_{A_{p,q}}^q.
\end{equation*}
The sharp estimates of Lacey, Moen, P\'erez, and Torres
\cite{LaceyMoenPerezTorres2010} are
\begin{align*}
 \|M_\alpha\|_{L_w^{p,p}\to L_w^{q,q}}
 &\lesssim \mathcal Q_{p,q}(w)^{(1-\alpha/d)p'/q},\\
 \|I_\alpha\|_{L_w^{p,p}\to L_w^{q,q}}
 &\lesssim \mathcal Q_{p,q}(w)^{(1-\alpha/d)\max\{1,p'/q\}},
\end{align*}
and both exponents are optimal.

For a locally finite positive measure $\sigma$ and $0<c<1$, define the reverse measure constant
\begin{equation*}
 [\sigma]_{\mathcal R_c}
 :=\sup_Q\sup_{\substack{E\subset Q\\ |E|\ge c|Q|}}
 \frac{\sigma(Q)}{\sigma(E)}.
\end{equation*}
The proof retains the following mixed quantitative information.

\begin{theorem}\label{thm:MQO}
Assume
 $$
 0\le\alpha<d,\qquad 1<p<\infty,
 \qquad \frac1q=\frac1p-\frac\alpha d>0,
 $$
and let $q\le s\le\infty$.  Write
$\sigma=w^{-p'}$ and
$\mathcal A_{p,q}^{[s]}=A_{(p,p),(q,s)}$.  There exists a dimensional
constant $c_d\in(0,1)$ such that
\begin{equation}\label{eq:MQO}
 [w]_{\mathcal A_{p,q}^{[s]}}
 \lesssim
 \|M_\alpha\|_{L_w^{p,p}\to L_w^{q,s}}
 \lesssim
 [w]_{\mathcal A_{p,q}^{[s]}}
 [\sigma]_{\mathcal R_{c_d}}^{1/q}.
\end{equation}
Moreover,
\begin{equation}\label{eq:MQO-1}
 [\sigma]_{\mathcal R_{c_d}}
 \lesssim
 [w]_{\mathcal A_{p,q}^{[s]}}^{p'},
\end{equation}
and consequently
 $$
 \|M_\alpha\|_{L_w^{p,p}\to L_w^{q,s}}
 \lesssim
 [w]_{\mathcal A_{p,q}^{[s]}}^{1+p'/q}.
 $$
\end{theorem}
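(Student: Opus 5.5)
The plan is to reorganize the proof of \cref{thm:COC}, keeping the reverse measure constant of $\sigma=w^{-p'}$ explicit instead of absorbing it at once through \cref{lem:DMI}. The argument has three parts.

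\emph{Lower bound in \eqref{eq:MQO}.} Fix a cube $Q$ and test the operator inequality on $f_N=\min\{w^{-p'},N\}\charac_Q$. On $Q$ we have $M_\alpha f_N\ge |Q|^{\alpha/d-1}\int_Q f_N$, hence
\[
|Q|^{\alpha/d-1}\Big(\int_Q f_N\Big)\norm{w\charac_Q}_{L^{q,s}}\le \norm{M_\alpha}\,\norm{wf_N}_{L^p}\le\norm{M_\alpha}\Big(\int_Q f_N\Big)^{1/p}.
\]
Here we used $(wf_N)^p\le w^{-p'}\min\{w^{-p'},N\}$ and the identity $p-pp'=-p'$. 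Dividing by $(\int_Qf_N)^{1/p}$ and letting $N\to\infty$ by monotone convergence gives $|Q|^{\alpha/d-1}\sigma(Q)^{1/p'}\norm{w\charac_Q}_{L^{q,s}}\le\norm{M_\alpha}$. Since $\alpha/d-1=1/p-1/q-1$, the left side equals $\norm{w}_{L^{q,s}(Q)}\norm{w^{-1}}_{L^{p',p'}(Q)}$, and taking the supremum over $Q$ yields the left inequality.

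\emph{Upper bound in \eqref{eq:MQO}.} By the three-lattice trick it suffices to bound $M^{\mathcal D}_\alpha$. We then repeat the sparse argument of \cref{thm:COC} word for word: the pointwise domination by $\sum_{Q\in\mathcal S}|Q|^{\alpha/d-1}(\int_Qf)\charac_{E_Q}$, then \cref{lem:DLQ}, and then the estimate
\[
|Q|^{\alpha/d-1}\Big(\int_Q f\Big)\norm{w\charac_Q}_{L^{q,s}}\le[w]_{\mathcal A^{[s]}_{p,q}}\bigl[\sigma(Q)^{\alpha/d}\langle g\rangle_{\sigma,Q}\bigr]\sigma(Q)^{1/q}.
\]
The only change is at the step where \cref{lem:DMI} was invoked. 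There we simply write $\sigma(Q)\le[\sigma]_{\mathcal R_{c_d}}\sigma(E_Q)$, which is legitimate because $|E_Q|\ge c_d|Q|$, with $c_d$ the sparseness constant. This gives
\[
\norm{wM^{\mathcal D}_\alpha f}_{L^{q,s}}^q\lesssim[w]^q_{\mathcal A^{[s]}_{p,q}}[\sigma]_{\mathcal R_{c_d}}\int(M^{\mathcal D}_{\alpha,\sigma}g)^q\,d\sigma.
\]
The $\sigma$-weighted dyadic fractional maximal bound $L^p(\sigma)\to L^q(\sigma)$ has a constant depending only on $p,q$. Combining it with \eqref{eq:COC-2} and taking $q$th roots gives the right inequality in \eqref{eq:MQO}.

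\emph{The bound \eqref{eq:MQO-1} and the consequence.} This step is the only real input beyond bookkeeping, and it is exactly the quantitative part of \cref{lem:DMI}. If I had to reprove it directly, I would argue as follows. Let $E\subset Q$ with $|E|\ge c_d|Q|$, and apply the lower-bound testing argument (equivalently, \cref{thm:OWT}) to $f=\sigma\charac_E$. On $Q$ we have $M_\alpha f\ge|Q|^{\alpha/d-1}\sigma(E)$. Running the estimate with the known bound $\norm{M_\alpha}\lesssim[w]\,[\sigma]^{1/q}_{\mathcal R}$ would be circular, so instead I would use the $\mathcal A^{[s]}$ condition on $Q$ together with the fact that $\norm{w\charac_Q}_{L^{q,\infty}}\gtrsim\norm{w\charac_E}_{L^{q,\infty}}$, and compare against H\"older's inequality in the form $|E|\le\sigma(E)^{1/p'}\norm{w\charac_E}_{L^{p}}$-type bounds as in Sweeting's Lemmas 2.3--2.4. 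I expect this to be the main obstacle if done from scratch, so I would rely on \cref{lem:DMI} as stated: it gives $\sigma(Q)\le C[w]^{p'}\sigma(E)$ whenever $|E|\ge c_d|Q|$. Taking the supremum over such $E$ and $Q$ yields \eqref{eq:MQO-1}. Substituting \eqref{eq:MQO-1} into \eqref{eq:MQO} produces the exponent $1+p'/q$.
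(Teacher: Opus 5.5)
Your proposal is correct and follows the paper's proof. The paper obtains the lower bound from \cref{thm:OWT}, which encodes the same cube test you run on $\min\{w^{-p'},N\}$; the upper bound is the sparse argument of \cref{thm:COC} with the step $\sigma(Q)\le[\sigma]_{\mathcal R_{c_d}}\sigma(E_Q)$ kept explicit; and \eqref{eq:MQO-1} is read off from the quantitative part of \cref{lem:DMI}.

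There is one small slip in the lower bound. The pointwise bound you need is $(wf_N)^p=w^pf_N^{p-1}f_N\le w^{p-p'(p-1)}f_N=f_N$, using $f_N\le w^{-p'}$ and $p'(p-1)=p$, rather than $(wf_N)^p\le w^{-p'}\min\{w^{-p'},N\}$. The conclusion $\norm{wf_N}_{L^p}\le(\int_Qf_N)^{1/p}$ that you draw from it is nevertheless correct.
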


\begin{proof}
The lower estimate in \eqref{eq:MQO} follows from
\cref{thm:OWT}.  In the proof of
\cref{thm:COC}, retain the factor needed to replace
$\sigma(Q)$ by $\sigma(E_Q)$ for the sparse sets $E_Q$.  Since
$|E_Q|\ge c_d|Q|$, this factor is at most
$[\sigma]_{\mathcal R_{c_d}}$.  Taking the $q$th root gives the upper
estimate in \eqref{eq:MQO}.  Finally,
\eqref{eq:MQO-1} is exactly the
quantitative conclusion of \cref{lem:DMI}.
\end{proof}

Combining \cref{thm:MQO} with
\cref{prop:QLO} yields
\begin{equation*}
 1+\frac{p'}s
 \le\Gamma_s\le1+\frac{p'}q,
 \qquad q\le s\le\infty,
\end{equation*}
where $1/\infty=0$.
for the optimal exponent in
 $$
 \|M_\alpha\|_{L_w^{p,p}\to L_w^{q,s}}
 \lesssim [w]_{\mathcal A_{p,q}^{[s]}}^{\Gamma_s}.
 $$
At $s=q$ the two exponents coincide and reproduce the classical sharp constant.
The gap for $s>q$ leads to the open question.
The endpoint $s=\infty$ is closely related to sharp multiplier weak-type
bounds.  Recent estimates of Lerner, Li, Ombrosi, and Rivera-R\'ios
\cite{LernerLiOmbrosiRivera2024a,LernerLiOmbrosiRivera2024b} show that this
question belongs to an active quantitative theory.

\subsection{\texorpdfstring{$p>1$, $r=1$, and $s<\infty$}{p>1, r=1, and s<infinity}}

The next example proves that $w\in A_{(p,r),(q,s)}$ is not sufficient for the case $r=1$.

\begin{example}\label{ex:WIF}
Let $1\le s<\infty$ and
\begin{equation*}
 w_1(x)=|x|^{d/p'}.
\end{equation*}
Then
$
 w_1\in A_{(p,1),(q,s)},
$
but
\begin{equation*}
 M_\alpha:L_{w_1}^{p,1}\not\longrightarrow L_{w_1}^{q,s}.
\end{equation*}
\end{example}

\begin{proof}
Let
 $
 f=w_1^{-1}\mathbf1_{B(0,1)}.
 $
Then $w_1f=\mathbf1_{B(0,1)}$, and hence $f\in L_{w_1}^{p,1}$.  If
$|x|>2$, a cube containing both $x$ and $B(0,1)$ gives
 $$
 M_\alpha f(x)
 \gtrsim |x|^{\alpha-d}
 \int_{B(0,1)}|y|^{-d/p'}\,dy
\approx |x|^{\alpha-d}.
 $$
Therefore
 $$
 w_1(x)M_\alpha f(x)
 \gtrsim |x|^{d/p'+\alpha-d}=|x|^{-d/q}.
 $$
Since $|x|^{-d/q}\notin L^{q,s}$ for any
finite $s$, then $M_\alpha$ is not bounded from $L_{w_1}^{p,1}$ to $L_{w_1}^{q,s}$.
\end{proof}

\subsection{\texorpdfstring{$p>1$: $r=1$, $s=\infty$; $1<r<p$, $r\le s\le\infty$; or $p<r\le s$, $s>q$}{p>1: r=1, s=infinity; 1<r<p; or p<r<=s, s>q}}

\begin{remark}\label{rem:PRS}
Let $p>1$, $r=1$ and $s=\infty$.  The class
$A_{(p,1),(q,\infty)}$ is necessary for
$M_\alpha:L_w^{p,1}\to L_w^{q,\infty}$.  On the other hand, \cref{thm:COC} with $L_w^{p,1}\hookrightarrow L_w^{p,p}$, shows that $A_{(p,p),(q,\infty)}$ is sufficient. However,
 $$
 A_{(p,p),(q,\infty)}\subsetneq A_{(p,1),(q,\infty)}
 $$
Whether the class $A_{(p,1),(q,\infty)}$ is sufficient remains open.
\end{remark}
\medskip

\begin{remark}\label{rem:II}
Let $p>1$, $1<r<p$, and $r\le s\le\infty$.  Boundedness of
$M_\alpha:L_w^{p,r}\to L_w^{q,s}$ implies
$w\in A_{(p,r),(q,s)}$ by \cref{cor:TWT}.  By
$L_w^{p,r}\hookrightarrow L_w^{p,p}$ and \cref{thm:COC}, the conditions $A_{(p,p),(q,s)}$ is sufficient. However, a complete characterization remains open.
\end{remark}
\medskip

\begin{remark}\label{rem:LILO}
For $p>1$, $p<r\le s$, and $s>q$,
$
 w\in A_{(p,r),(q,s)}
$
is a necessary condition for the boundedness of
$
 M_\alpha:L_w^{p,r}\longrightarrow L_w^{q,s}.
$
However, we have not established a corresponding sufficient condition
in this range.  In particular, it remains open whether
$A_{(p,r),(q,s)}$ is sufficient for the above boundedness.
\end{remark}

\subsection{\texorpdfstring{$p=1$}{p=1}}

In this subsection, we use the endpoint notation
\begin{equation*}
 [w]_{A_{(1,1),(q,\infty)}}
 :=\sup_Q
 \norm{w}_{L^{q,\infty}(Q)}
 \norm{w^{-1}}_{L^\infty(Q)}.
\end{equation*}

The case $s<\infty$ fails without a weight.

\begin{example}\label{prop:P1O}
Take $f=\mathbf1_{B(0,1)}$. If $|x|>2$, we have
$$M_\alpha f(x)\gtrsim |x|^{\alpha-d}=|x|^{-d/q}.$$
Then
 $$
 M_\alpha:L^{1,1}\longrightarrow L^{q,s}
 $$
does not hold for finite $s$.
\end{example}

\medskip

If $s=\infty$, the unweighted endpoint holds, but the multiplier weighted
setting is not yet obtained.

\begin{proposition}\label{prop:P1E}
Let
$
 0\le\alpha<d,
 q=\frac{d}{d-\alpha}.
$
If
$$
 M_\alpha:L_w^{1,1}(\R^d)
 \longrightarrow L_w^{q,\infty}(\R^d)
$$
is bounded, then
$
 w\in A_{(1,1),(q,\infty)}.
$
Conversely, the estimate holds whenever $w\in A_{1,q}$.
Thus
$$
 A_{1,q}
 \subset
 \bigl\{w:
 M_\alpha:L_w^{1,1}\to L_w^{q,\infty}
 \text{ is bounded}\bigr\}
 \subset
 A_{(1,1),(q,\infty)}.
$$
\end{proposition}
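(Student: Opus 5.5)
Two things must be shown: necessity of $A_{(1,1),(q,\infty)}$, and sufficiency of $A_{1,q}$. The definition of $A_{1,q}$ is not written out above. We take the standard one,
$$\Big(\avgint_Q w^q\Big)^{1/q}\le C\operatorname*{ess\,inf}_Q w,$$
so that $\|w^{-1}\|_{L^\infty(Q)}=(\operatorname{ess\,inf}_Q w)^{-1}$ and $A_{1,q}$ is the $p=1$ analogue of $A_{p,q}$ with $p'=\infty$.

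\textbf{Necessity.} We imitate the proof of \cref{thm:OWT}/\cref{cor:TWT} at $p=1$. Fix a cube $Q$ and $\varepsilon>0$. Choose a set $F\subset Q$ with $|F|>0$ on which
$$w^{-1}\ge(1-\varepsilon)\|w^{-1}\|_{L^\infty(Q)}.$$
If $w^{-1}$ is not essentially bounded on $Q$, we instead use sets where $w^{-1}\ge N$ and let $N\to\infty$. Put $f=w^{-1}\mathbf1_F$. Then $\|f\|_{L^{1,1}_w}=|F|$.

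For $x\in Q$ we have
$$M_\alpha f(x)\ge |Q|^{\alpha/d-1}\int_F w^{-1}\ge(1-\varepsilon)|Q|^{\alpha/d-1}|F|\,\|w^{-1}\|_{L^\infty(Q)}.$$
Hence
$$\|wM_\alpha f\|_{L^{q,\infty}}\ge(1-\varepsilon)|Q|^{\alpha/d-1}|F|\,\|w^{-1}\|_{L^\infty(Q)}\,\|w\mathbf1_Q\|_{L^{q,\infty}}.$$
Divide by $|F|$ and use $\alpha/d-1=-1/q$ (since $q=d/(d-\alpha)$). This gives
$$\|w\|_{L^{q,\infty}(Q)}\|w^{-1}\|_{L^\infty(Q)}\le(1-\varepsilon)^{-1}\|M_\alpha\|_{L^{1,1}_w\to L^{q,\infty}_w}.$$
Letting $\varepsilon\to0$ and taking the supremum over $Q$ yields $w\in A_{(1,1),(q,\infty)}$. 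The only care needed is the truncation when $w^{-1}$ is unbounded, and the possibility that $\|w\mathbf1_Q\|_{L^{q,\infty}}=\infty$; both are excluded by boundedness of the operator.

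\textbf{Sufficiency.} This is the classical Muckenhoupt--Wheeden weak-type estimate
$$\|wM_\alpha f\|_{L^{q,\infty}}\lesssim\|wf\|_{L^1}\quad\text{for } w\in A_{1,q}.$$
We prove it directly by the standard covering argument. As in \cref{thm:COC}, reduce to a dyadic $M^{\mathcal D}_\alpha$ and to $f\ge0$ bounded with compact support.

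Fix $\lambda>0$. The set $\{M^{\mathcal D}_\alpha f>\lambda\}$ is a disjoint union of maximal dyadic cubes $Q_j$ satisfying
$$|Q_j|^{\alpha/d-1}\int_{Q_j}f>\lambda.$$
Then, using $\alpha/d-1=-1/q$ for the middle equality and the $A_{1,q}$ condition for the last inequality,
$$w^q(Q_j)<w^q(Q_j)\Big(\lambda^{-1}|Q_j|^{\alpha/d-1}\int_{Q_j}f\Big)^q=\lambda^{-q}\Big(\avgint_{Q_j}w^q\Big)\Big(\int_{Q_j}f\Big)^q\lesssim\lambda^{-q}\Big(\int_{Q_j}fw\Big)^q.$$
Summing over $j$ and using $\sum a_j^q\le(\sum a_j)^q$ (valid since $q\ge1$) gives
$$\lambda^q w^q(\{M^{\mathcal D}_\alpha f>\lambda\})\lesssim\|fw\|_{L^1}^q.$$

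It remains to convert this into the multiplier space. The left side controls the distribution of $M_\alpha f$ with respect to $w^q\,dx$, whereas the multiplier space $L^{q,\infty}_w$ requires the distribution of the product $wM_\alpha f$ with respect to $dx$. \textbf{This mismatch is the main obstacle.} By \cref{ex:TCE}, these spaces differ in general, so the conversion cannot be taken for granted.

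To bridge it, we would try the following. Decompose
$$\{wM_\alpha f>\lambda\}\subset\bigcup_k\big\{w\sim2^k,\ M_\alpha f>\lambda2^{-k-1}\big\}.$$
On each piece, $w^q\approx2^{kq}$, so its Lebesgue measure is at most $2^{-kq}w^q(\{M_\alpha f>\lambda2^{-k-1}\})\lesssim\lambda^{-q}\|fw\|_1^q$. Summing over $k$ loses a factor, so this is not enough by itself. We would then use the $A_\infty$/reverse-H\"older property of $w^q$ (available from $A_{1,q}$, as in \cref{thm:AA}) to show that $w$ is comparable to its infimum on a large portion of each $Q_j$.

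If this fails, the fallback is to invoke the Muckenhoupt--Wheeden theorem directly in its multiplier form, $|\{wM_\alpha f>\lambda\}|\lesssim(\lambda^{-1}\|wf\|_1)^q$ for $w\in A_{1,q}$. Its proof runs through $\{M_\alpha f>\lambda/w\}$ and the $A_1$-type pointwise bound $M(w^q)\lesssim w^q$, covering with cubes on which $w\ge\operatorname{ess\,inf}_Q w$.

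The chain of inclusions in the statement then follows by combining the two parts.
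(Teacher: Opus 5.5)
Your proposal is correct and follows the paper. Necessity is exactly the $p=r=1$ case of the testing argument behind \cref{thm:TWT} and \cref{cor:TWT}: your $f=w^{-1}\mathbf 1_F$ is the near-extremizer for the $L^1$--$L^\infty$ pairing. For sufficiency, the paper likewise only cites the multiplier weak-type bound $\|wM_\alpha f\|_{L^{q,\infty}}\lesssim\|wf\|_{L^1}$ for $w\in A_{1,q}$; for $\alpha>0$ the relevant reference is Cruz-Uribe--Sweeting, as in the paper, rather than Muckenhoupt--Wheeden.

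Your direct bridge from the $L^{q,\infty}(w^q)$ bound is unfinished, but it is not needed once you cite. If you want the argument self-contained, the steps are:
\begin{enumerate}
\item Localize each piece $\{2^k<w\le 2^{k+1}\}$ to the maximal cubes $Q_j^k$ of $\{M_\alpha^{\mathcal D}f>\lambda 2^{-k-1}\}$.
\item Bound $|\{x\in Q_j^k:w(x)>2^k\}|$ by Chebyshev and the reverse H\"older inequality for $w^q$ with exponent $1+\eta$. This gains a factor $(\operatorname{ess\,inf}_{Q_j^k}w/2^k)^{q\eta}$ on top of $\lambda^{-q}(\int_{Q_j^k}|f|w)^q$.
\item Sum over $j,k$. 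The cubes containing a fixed point are nested in $k$, and their $\operatorname{ess\,inf}w$ is nonincreasing, so these factors decay geometrically.
\end{enumerate}
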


\begin{proof}
Necessity follows the same arguments as in \cref{cor:TWT}. The sufficiency for $w\in A_{1,q}$ is the classical multiplier
weak-type estimate for $M_\alpha$; see
\cite{CruzUribeSweeting2024}.  The sufficiency of the larger class
$A_{(1,1),(q,\infty)}$ is open.  When $\alpha=0$, this reduces to
the $p=1$ Muckenhoupt--Wheeden problem \cite{MuckenhouptWheeden1977}.
\end{proof}

\medskip

If $r>1$, the weak boundedness fails in the unweighted setting.

\begin{example}\label{ex:P1LI}
Let $p=1$, $r>1$, and
$
 q=\frac{d}{d-\alpha}.
$
Then
$$
 M_\alpha:L^{1,r}(\R^d)\longrightarrow L^{q,\infty}(\R^d)
$$
is not bounded.
\end{example}
\begin{proof}
Choose $a$ such that
$
 \frac1r<a\le1,
$
and let $f\ge0$ be supported in a fixed cube $Q_0$ and satisfy
$$
 f^*(t)
 =
 \frac{1}{t[\log(e/t)]^a},
 \qquad 0<t<e^{-2},
$$
and $f^*(t)=0$ for $t\ge e^{-2}$. Then
\begin{align*}
 \|f\|_{L^{1,r}}^r
 &=
 r\int_0^{e^{-2}}
 \bigl[t f^*(t)\bigr]^r\,\frac{dt}{t} =
 r\int_0^{e^{-2}}
 \frac{dt}{t[\log(e/t)]^{ar}}
 <\infty.
\end{align*}
On the other hand,
$$
 \|f\|_{L^1}
 =
 \int_0^{e^{-2}}f^*(t)\,dt
 =
 \int_0^{e^{-2}}
 \frac{dt}{t[\log(e/t)]^a}
 =\infty,
$$
since $a\le1$.

For $N\ge1$, set
$
 f_N:=\min\{f,N\}.
$
Since $0\le f_N\le f$, the property of Lorentz spaces gives
\[
 \sup_{N\ge1}\|f_N\|_{L^{1,r}}
 \le \|f\|_{L^{1,r}}<\infty.
\]
Moreover, by monotone convergence,
$
 \int_{Q_0}f_N(x)\,dx\longrightarrow\infty.
$

For every $x\in Q_0$, the cube $Q_0$ is admissible in the definition
of $M_\alpha$, and hence
$$
 M_\alpha f_N(x)
 \ge
 |Q_0|^{\alpha/d-1}\int_{Q_0}f_N.
$$
Therefore,
\begin{align*}
 \|M_\alpha f_N\|_{L^{q,\infty}}
 &\ge
 |Q_0|^{1/q}
 |Q_0|^{\alpha/d-1}
 \int_{Q_0}f_N =
 \int_{Q_0}f_N
 \longrightarrow\infty,
\end{align*}
where
$
 \frac1q=1-\frac{\alpha}{d}.
$
Thus $M_\alpha$ is not bounded from $L^{1,r}$ to $L^{q,\infty}$.
\end{proof}

\section{Applications}

\subsection{A two-weight commutator problem}

Throughout this section, let $0<\alpha<d$ and
\begin{equation*}
 1<p<\frac d\alpha,
 \qquad
 \frac1{p_\alpha}=\frac1p-\frac\alpha d.
\end{equation*}

\begin{lemma}\label{lem:weak-multiplier-identity}
Let $1<p<P<\infty$ and
 $$
 \frac1t=\frac1p-\frac1P.
 $$
Then,
\begin{equation}\label{eq:weak-multiplier-identity}
 \norm{F}_{L^{P,p}}^p
 \approx
 \sup_{\substack{v\ge0\\ \norm{v}_{L^{t,\infty}}\le1}}
 \norm{Fv}_{L^p}^p.
\end{equation}
The constants depend only on $p$ and $P$.
\end{lemma}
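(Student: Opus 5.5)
Two inequalities are needed, and both will be proved by working with decreasing rearrangements.

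\emph{Upper bound.} Fix $v\ge0$ with $\norm{v}_{L^{t,\infty}}\le1$. By O'Neil's product inequality (\cref{lem:L}(iv)) with $a_0=P$, $b_0=p$, $a_1=t$, $b_1=\infty$, $a=b=p$, we have $1/p=1/P+1/t$ and $1/p\le 1/p+0$. Hence
$$\norm{Fv}_{L^p}\lesssim\norm{F}_{L^{P,p}}\norm{v}_{L^{t,\infty}}\le\norm{F}_{L^{P,p}}.$$
If one prefers to avoid O'Neil, the Hardy--Littlewood rearrangement inequality gives the same bound directly:
$$\int|F|^pv^p\le\int_0^\infty F^*(s)^pv^*(s)^p\,ds\le\int_0^\infty F^*(s)^ps^{-p/t}\,ds .$$
Since $1-p/t=p/P$, the last integral equals $\int_0^\infty[s^{1/P}F^*(s)]^p\,ds/s\approx\norm{F}_{L^{P,p}}^p$. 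Taking the supremum over $v$ gives the upper bound.

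\emph{Lower bound.} This is the step that needs a construction. We choose $v$ adapted to the level sets of $F$. Let $\lambda\mapsto d_F(\lambda)=|\{|F|>\lambda\}|$, and take
$$v(x):=d_F(|F(x)|)^{-1/t}\quad\text{for }F(x)\neq0,\qquad v=0 \text{ elsewhere}.$$
Heuristically $v(x)=\phi(\mu(x))^{-1/t}$, where $\mu(x)$ is the ``rank'' of $x$ in the rearrangement of $|F|$. We then need to check two things.

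First, $\norm{v}_{L^{t,\infty}}\le1$. For $\tau>0$ we have $\{v>\tau\}=\{d_F(|F|)<\tau^{-t}\}$. On the set $\{|F|>\lambda\}$ the function $d_F(|F|)$ is at most $d_F(\lambda)$, and conversely $|\{x: d_F(|F(x)|)<u\}|\le u$. The latter is the standard fact that the distribution of $d_F(|F|)$ is dominated by Lebesgue measure, because $d_F(|F(x)|)\ge$ (rank of $x$). Hence $|\{v>\tau\}|\le\tau^{-t}$.

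Second, $\int|F|^pv^p\gtrsim\int_0^\infty F^*(s)^ps^{-p/t}\,ds$. Since $d_F(|F(x)|)\le\mu(x)$ pointwise, we get $v\ge\mu^{-1/t}$, and the pushforward of $|F|$ under rearrangement gives $\int|F|^p\mu^{-p/t}=\int_0^\infty F^*(s)^ps^{-p/t}\,ds$.

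The main obstacle is rigorously defining the rank $\mu$ when $|F|$ has flat level sets, i.e.\ when the distribution has plateaus. We would handle this either by using a measure-preserving map $\mu:\R^d\to(0,\infty)$ with $|F|=F^*\circ\mu$ (Ryff's theorem, on the support of $F$), and setting $v=\mu^{-1/t}$ directly. In that case $v^*(s)=s^{-1/t}$ exactly, so the norm bound is immediate. Alternatively, we would reduce to simple $F$ and approximate by monotone convergence. With the Ryff choice, $\norm{Fv}_{L^p}^p=\int_0^\infty F^*(s)^ps^{-p/t}\,ds=\frac{1}{p}\cdot\frac{P}{1}\cdot\norm{F}_{L^{P,p}}^p$, up to the normalization constant. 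The constants depend only on $p$ and $P$, which completes \eqref{eq:weak-multiplier-identity}.
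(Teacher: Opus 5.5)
Your argument is correct in its final form (the Ryff construction), but it takes a different route from the paper. The paper constructs nothing. It writes $\norm{F}_{L^{P,p}}^p=\norm{|F|^p}_{L^{P/p,1}}$ and invokes the K\"othe duality $(L^{P/p,1})'=L^{t/p,\infty}$ (note $(P/p)'=t/p$), together with the fact that a Banach function norm is recovered from its associate norm. It then substitutes $G=v^p$, using $\norm{v^p}_{L^{t/p,\infty}}=\norm{v}_{L^{t,\infty}}^p$.

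You instead prove this particular duality by hand. The Hardy--Littlewood rearrangement inequality gives the easy half. Ryff's map $\mu$ with $|F|=F^*\circ\mu$ gives the extremal $v=\mu^{-1/t}$, with $v^*(s)=s^{-1/t}$ on the support of $F^*$. Your route is self-contained and gives a sharp constant: both halves are exact, so you get $\sup_v\norm{Fv}_{L^p}^p=\frac Pp\norm{F}_{L^{P,p}}^p$. The paper's route is shorter but yields only an equivalence, with constants coming from the abstract duality.

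Two caveats.

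First, drop the preliminary choice $v=d_F(|F|)^{-1/t}$ entirely rather than trying to make it rigorous. The quantity $d_F(|F(x)|)=|\{|F|>|F(x)|\}|$ is always \emph{at most} the rank of $x$, yet your norm check uses $d_F(|F(x)|)\ge$ rank and your integral check uses $\le$. With a plateau the construction genuinely fails. For example, if $F=\mathbf 1_E$ with $0<|E|<\infty$, then $d_F(1)=0$, so $v\equiv\infty$ on $E$, and $|\{d_F(|F|)<u\}|=|E|>u$ for small $u$.

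Second, Ryff's theorem on the $\sigma$-finite space $\R^d$ requires $F^*(\infty)=0$. If $F^*(\infty)=c>0$, you must show both sides are infinite. This is immediate: take $v=\mu^{-1/t}$ for a measure-preserving $\mu$ from the infinite-measure set $\{|F|>c/2\}$ onto $(0,\infty)$, and use $p/t<1$ to get $\int|F|^pv^p=\infty$. Alternatively, use the reduction you mention, to simple functions with finite-measure support via monotone convergence, which also makes Ryff's map elementary. This case matters because in \cref{thm:family-to-BMO} the lower bound is applied to $F=[b,I_\alpha]f\,w$, which is not known a priori to lie in $L^{P,p}$.
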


\begin{proof}
Since
 $$
 \left(\frac Pp\right)'=\frac tp,
 $$
the K\"othe dual of $L^{P/p,1}$ is $L^{t/p,\infty}$.  Then
\begin{align*}
 \norm{F}_{L^{P,p}}^p
 &=\norm{|F|^p}_{L^{P/p,1}}\approx
 \sup_{\substack{G\ge0\\ \norm{G}_{L^{t/p,\infty}}\le1}}
 \int_{\R^d}|F(x)|^pG(x)\,dx.
\end{align*}
Writing $G=v^p$ and using
 $$
 \norm{v^p}_{L^{t/p,\infty}}
 =\norm{v}_{L^{t,\infty}}^p,
 $$
we obtain
 $$
 \int_{\R^d}|F|^pG
 =\int_{\R^d}|Fv|^p
 =\norm{Fv}_{L^p}^p.
 $$
This proves \eqref{eq:weak-multiplier-identity}.
\end{proof}

We next establish the one-weight commutator characterization.

\begin{proposition}\label{prop:WTN}
Let $0<\alpha<d$, $1<p<d/\alpha$,
$1/p_\alpha=1/p-\alpha/d$, let $w\in A_{p,p_\alpha}$, and let
$1\le r,s\le\infty$.  If $Q$ and $Q'$ are
cubes of equal side length contained in a common cube $\widetilde Q$ with
$|\widetilde Q|\lesssim_d|Q|$, then
\begin{equation}\label{eq:weighted-test-product}
 \norm{w\charac_{Q'}}_{L^{p,r}}
 \norm{w^{-1}\charac_Q}_{L^{p_\alpha',s'}}
 \lesssim
 [w]_{A_{p,p_\alpha}}\,|Q|^{1+\alpha/d}.
\end{equation}
\end{proposition}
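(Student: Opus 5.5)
The plan is to enlarge both cubes to the common cube $\widetilde Q$ and trade the Lorentz norms for Lebesgue norms with the \emph{other} primary index, using \cref{lem:L}(iii). This is possible because $0<\alpha$ forces $p<p_\alpha$, and hence $p_\alpha'<p'$. After that, the classical $A_{p,p_\alpha}$ condition on $\widetilde Q$ does the rest, and $|\widetilde Q|\approx|Q|$ converts the resulting power of $|\widetilde Q|$ into $|Q|^{1+\alpha/d}$.

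First, since Lorentz quasi-norms are monotone under pointwise domination and $Q,Q'\subset\widetilde Q$, we have
\[
 \norm{w\charac_{Q'}}_{L^{p,r}}\le\norm{w\charac_{\widetilde Q}}_{L^{p,r}},
 \qquad
 \norm{w^{-1}\charac_{Q}}_{L^{p_\alpha',s'}}\le\norm{w^{-1}\charac_{\widetilde Q}}_{L^{p_\alpha',s'}}.
\]
Next, apply \cref{lem:L}(iii) on $E=\widetilde Q$ in two ways.

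For the first factor, take $a_0=p<a_1=p_\alpha$, $b_0=r$, $b_1=p_\alpha$. This gives
\[
 \norm{w\charac_{\widetilde Q}}_{L^{p,r}}\lesssim|\widetilde Q|^{\frac1p-\frac1{p_\alpha}}\Big(\int_{\widetilde Q}w^{p_\alpha}\Big)^{1/p_\alpha}.
\]

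For the second factor, take $a_0=p_\alpha'<a_1=p'$, $b_0=s'$, $b_1=p'$. This gives
\[
 \norm{w^{-1}\charac_{\widetilde Q}}_{L^{p_\alpha',s'}}\lesssim|\widetilde Q|^{\frac1{p_\alpha'}-\frac1{p'}}\Big(\int_{\widetilde Q}w^{-p'}\Big)^{1/p'}.
\]
The point to watch here is that \cref{lem:L}(iii) allows arbitrary secondary indices, including $s'=\infty$ or $r=\infty$. It only needs the strict inequality between the primary indices, which holds precisely because $\alpha>0$. If one is worried about the endpoint cases $b_0\in\{1,\infty\}$, one can first use \cref{lem:L}(i) to pass to $b_0=\infty$, which is the largest space, and then compare $L^{a_0,\infty}(E)$ with $L^{a_1}(E)$ via the elementary bound $\sup_t t^{1/a_0}f^*(t)\le |E|^{1/a_0-1/a_1}\sup_t t^{1/a_1}f^*(t)$ for $t\le|E|$.

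Multiplying the two bounds and writing the integrals as averages, we get
\[
 \Big(\int_{\widetilde Q}w^{p_\alpha}\Big)^{1/p_\alpha}\Big(\int_{\widetilde Q}w^{-p'}\Big)^{1/p'}\le[w]_{A_{p,p_\alpha}}|\widetilde Q|^{\frac1{p_\alpha}+\frac1{p'}}.
\]
The total power of $|\widetilde Q|$ is therefore
\[
 \frac1p-\frac1{p_\alpha}+\frac1{p_\alpha'}-\frac1{p'}+\frac1{p_\alpha}+\frac1{p'}=\frac1p+1-\frac1{p_\alpha}=1+\frac\alpha d .
\]
Finally, $|Q|\le|\widetilde Q|\lesssim_d|Q|$ gives \eqref{eq:weighted-test-product}.

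The only step I expect to need care is the application of \cref{lem:L}(iii) with extreme secondary indices, handled as indicated above. Everything else is bookkeeping of exponents.
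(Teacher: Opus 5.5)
Your proof is correct, and it is shorter than the paper's. The paper bounds $\norm{w\charac_{Q'}}_{L^{p,r}}$ in four steps:
\begin{enumerate}[label=\textup{(\alph*)}]
\item it applies \cref{lem:RHL}, i.e.\ \cref{lem:L}(iii) with $a_1=p(1+\varepsilon)$;
\item it uses the reverse H\"older inequality for $w^p\in A_p\subset A_\infty$ to come back down to $(\avgint_{\widetilde Q}w^p)^{1/p}$;
\item it then uses H\"older to climb to $(\avgint_{\widetilde Q}w^{p_\alpha})^{1/p_\alpha}$;
\item it treats the $w^{-1}$ factor the same way, with $w^{-p_\alpha'}\in A_{p_\alpha'}$ and H\"older from $p_\alpha'$ to $p'$.
\end{enumerate}
You observe that $\alpha>0$ gives the strict gaps $p<p_\alpha$ and $p_\alpha'<p'$. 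So \cref{lem:L}(iii) can be applied directly with $a_1$ equal to the exponent appearing in the $A_{p,p_\alpha}$ condition, and the reverse H\"older detour is unnecessary.

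Your route uses no $A_\infty$ input. Its implicit constant depends only on $d,\alpha,p,r,s$ and the constant in $|\widetilde Q|\lesssim_d|Q|$, so the bound is genuinely linear in $[w]_{A_{p,p_\alpha}}$. In the paper the implicit constant depends on the reverse H\"older constants of $w^p$ and $w^{-p_\alpha'}$, and hence on $[w]_{A_{p,p_\alpha}}$ as well. In exchange, the paper's mechanism (the same as in \cref{thm:AA}(i)) does not use any gap between primary indices, so it would survive at $\alpha=0$ where yours would not. Since $\alpha>0$ is a hypothesis here, that costs you nothing.

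Your side remark on endpoint secondary indices is backwards. The main argument does not need it, because \cref{lem:L}(iii) is stated for all $0<b_0,b_1\le\infty$. By \cref{lem:L}(i), passing from $b_0$ to $\infty$ gives $\norm{f}_{L^{a_0,\infty}}\lesssim\norm{f}_{L^{a_0,b_0}}$. That is a lower bound for the quantity you must bound from above, so the remark covers only $b_0=\infty$ itself. It leaves the worst cases untouched, namely $r=1$ and $s=\infty$ (so $s'=1$). The correct reduction is to the smallest secondary index $b_0=1$, whose norm dominates all the others. Then, for $g=f\charac_E$, H\"older on $(0,|E|)$ gives
\[
 \norm{g}_{L^{a_0,1}}=\frac1{a_0}\int_0^{|E|}t^{1/a_0-1}g^*(t)\,dt\le C_{a_0,a_1}|E|^{1/a_0-1/a_1}\norm{g}_{L^{a_1}},
\]
since $t^{1/a_0-1}\in L^{a_1'}(0,|E|)$ exactly when $a_0<a_1$.
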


\begin{proof}
A standard consequence of $w\in A_{p,p_\alpha}$ is
 $$
 w^p\in A_p\subset A_\infty,
 \qquad
 w^{-p_\alpha'}\in A_{p_\alpha'}\subset A_\infty.
 $$
Applying \cref{lem:RHL} to $w$ and $w^{-1}$ gives
\begin{align*}
 \norm{w\charac_{Q'}}_{L^{p,r}}
 &\lesssim
 |Q|^{1/p}\Big(\avgint_{\widetilde Q}w^p\Big)^{1/p},
 \\
 \norm{w^{-1}\charac_Q}_{L^{p_\alpha',s'}}
 &\lesssim
 |Q|^{1/p_\alpha'}
 \Big(\avgint_{\widetilde Q}w^{-p_\alpha'}\Big)^{1/p_\alpha'}.
\end{align*}
Since $p<p_\alpha$, H\"{o}lder inequality yields
 $$
 \Big(\avgint_{\widetilde Q}w^p\Big)^{1/p}
 \le
 \Big(\avgint_{\widetilde Q}w^{p_\alpha}\Big)^{1/p_\alpha}
 $$
and
 $$
 \Big(\avgint_{\widetilde Q}w^{-p_\alpha'}\Big)^{1/p_\alpha'}
 \le
 \Big(\avgint_{\widetilde Q}w^{-p'}\Big)^{1/p'}.
 $$
Therefore
\begin{align*}
 &\norm{w\charac_{Q'}}_{L^{p,r}}
 \norm{w^{-1}\charac_Q}_{L^{p_\alpha',s'}}\lesssim
 |Q|^{1/p+1/p_\alpha'}
 \Big(\avgint_{\widetilde Q}w^{p_\alpha}\Big)^{1/p_\alpha}
 \Big(\avgint_{\widetilde Q}w^{-p'}\Big)^{1/p'}\le
 [w]_{A_{p,p_\alpha}}|Q|^{1+\alpha/d}.
\end{align*}
Therefore, the proof of \eqref{eq:weighted-test-product} is completed.
\end{proof}

We are now ready to give the one-weight Lorentz characterization of BMO.

\begin{theorem}\label{thm:one-weight-BMO-characterization}
Let $0<\alpha<d$, $1<p<d/\alpha$,
$1/p_\alpha=1/p-\alpha/d$, let $w\in A_{p,p_\alpha}$, and let
$1\le r\le s\le\infty$.  Suppose that $b$
is real-valued and locally integrable.  Then the following statements are
equivalent:
\begin{enumerate}[label=\textup{(\roman*)}]
\item $b\in\BMO$;
\item $
 [b,I_\alpha]$ is bounded from $L_w^{p,r}$ to $L_w^{p_\alpha,s}$.
\end{enumerate}
\end{theorem}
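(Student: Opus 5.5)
The plan has two directions. For (i)$\Rightarrow$(ii) we use extrapolation in the style of \cref{thm:LOD}. For (ii)$\Rightarrow$(i) we test the commutator on localized functions, in the classical Janson--Chanillo way. Weighted Lorentz norms of indicator-type functions are controlled by \cref{prop:WTN}.

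\emph{Sufficiency.} If $b\in\BMO$, the classical weighted theory (Segovia--Torrea, Cruz-Uribe--Moen) gives
$$
\norm{[b,I_\alpha]f}_{L_w^{q_0}}\le \Psi([w]_{A_{p_0,q_0}})\norm{b}_{\BMO}\norm{f}_{L_w^{p_0}}
$$
for all $w\in A_{p_0,q_0}$ with $1/p_0-1/q_0=\alpha/d$. The constant $\Psi$ is increasing in $[w]_{A_{p_0,q_0}}$, as one gets for instance from the conjugation (Cauchy integral) method or from sparse bounds. The commutator is linear, so \cref{thm:LOD} applies with $q=p_\alpha$. For $w\in A_{p,p_\alpha}$ and $r\le s$ it gives $[b,I_\alpha]:L_w^{p,r}\to L_w^{p_\alpha,s}$ directly.

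\emph{Necessity.} Fix a cube $Q$. Let $Q'$ be a cube of the same side length at distance comparable to $\ell(Q)$, with both inside $\widetilde Q$ and $|\widetilde Q|\lesssim|Q|$. On $Q\times Q'$ the kernel $|x-y|^{\alpha-d}$ does not change sign and is smooth. Following Janson/Chanillo, expand $|x-y|^{d-\alpha}$ in an absolutely convergent Fourier series on a neighbourhood of $Q\times Q'$ (rescaled to unit size):
$$
|x-y|^{d-\alpha}=\ell(Q)^{d-\alpha}\sum_k c_k e^{i\nu_k\cdot x/\ell(Q)}e^{i\nu_k\cdot y/\ell(Q)},\qquad \sum_k|c_k|<\infty .
$$
Let $m$ be a median of $b$ on $Q'$. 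Put $\phi=\operatorname{sgn}(b-m)\charac_Q$ and split $Q'$ into $E_1=\{b\ge m\}\cap Q'$ and $E_2=\{b<m\}\cap Q'$; each has measure at least $|Q'|/2$. Because $b(x)-b(y)$ has constant sign on $(Q\cap\{b\ge m\})\times E_2$ and on $(Q\cap\{b<m\})\times E_1$, we obtain
$$
\int_Q|b-m|\lesssim |Q|^{-\alpha/d}\sum_k|c_k|\,\Big|\int [b,I_\alpha](g_k)(x)\,h_k(x)\,dx\Big| .
$$
Here $g_k=\charac_{E_j}\times(\text{unimodular})$ and $h_k=\phi\times(\text{unimodular})$, supported in $Q'$ and $Q$ respectively.

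Now apply Lorentz duality, $(L_w^{p_\alpha,s})'=L_{w^{-1}}^{p_\alpha',s'}$, together with Lemma~\ref{lem:L}(ii). Each term is bounded by
$$
\norm{[b,I_\alpha]}\,\norm{w\charac_{Q'}}_{L^{p,r}}\norm{w^{-1}\charac_Q}_{L^{p_\alpha',s'}} .
$$
By \eqref{eq:weighted-test-product} this is at most $\norm{[b,I_\alpha]}[w]_{A_{p,p_\alpha}}|Q|^{1+\alpha/d}$. Dividing by $|Q|$ gives $\avgint_Q|b-m|\lesssim \norm{[b,I_\alpha]}$ uniformly in $Q$, so $b\in\BMO$.

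The main obstacle is the bookkeeping in the median/sign argument. One has to arrange that the positive quantity $\int_Q|b-m|$ is dominated by finitely many pairings $\pairing{[b,I_\alpha]g}{h}$, with $g$ and $h$ bounded by indicators of $Q'$ and $Q$. This is needed so that \cref{prop:WTN} can be applied with exactly $\charac_{Q'}$ in the $L^{p,r}$ slot and $\charac_Q$ in the dual slot. I would first try the cleaner Lerner--Ombrosi--Rivera-R\'ios variant: for a cube $Q$ there are sets $F_1,F_2\subset Q$ and $G_1,G_2\subset Q'$ such that
$$
\int_Q|b-m_b(Q)|\lesssim\sum_{j}\int_{F_j}\int_{G_j}|b(x)-b(y)|\,|x-y|^{\alpha-d}\,dy\,dx\cdot \ell(Q)^{d-\alpha},
$$
and the inner expression equals $\big|\int_{F_j}[b,I_\alpha]\charac_{G_j}\big|$ by constant sign. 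This avoids the Fourier expansion entirely.
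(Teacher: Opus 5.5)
\textbf{Verdict: the proposal is correct.} The sufficiency half is exactly the paper's route: the weighted Lebesgue bound for $[b,I_\alpha]$ on $A_{p_0,q_0}$, fed into \cref{thm:LOD}.

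For necessity, your skeleton also matches the paper. Both use test functions supported in $Q'$, paired against functions supported in $Q$, then the duality $(L_w^{p_\alpha,s})'=L_{w^{-1}}^{p_\alpha',s'}$, then \cref{prop:WTN}. The decomposition device is what differs.

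\textbf{The paper's decomposition.} It uses Janson's exact identity
$$\int_Q|b-b_{Q'}|=\frac1{|Q'|}\int_Q\int_{Q'}(b(x)-b(y))\operatorname{sgn}(b(x)-b_{Q'})\,dy\,dx,$$
and then expands $|x-y|^{d-\alpha}$ in its Fourier series. The series absorbs the kernel, so no sign information on $b(x)-b(y)$ is needed anywhere. Consequently the median splitting in your Fourier version is superfluous. The ``main obstacle'' you flag does not arise on that route.

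\textbf{Your Lerner--Ombrosi--Rivera-R\'ios variant.} This genuinely replaces the Fourier expansion.
\begin{itemize}
\item It uses only that $|x-y|^{\alpha-d}$ is positive and comparable to $\ell(Q)^{\alpha-d}$ on $Q\times Q'$.
\item It needs just two pairings $\int_{F_j}[b,I_\alpha]\charac_{G_j}$ with honest indicators. Hence \cref{prop:WTN} applies by lattice monotonicity, $\|w\charac_{G_j}\|_{L^{p,r}}\le\|w\charac_{Q'}\|_{L^{p,r}}$.
\item It avoids justifying the interchange of an infinite series with the pairing.
\item It would survive for rough kernels that are merely positive and nondegenerate on $Q\times Q'$.
\end{itemize}
The paper's route, in exchange, gives an exact identity with the mean $b_{Q'}$ and no median or sign bookkeeping. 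The price is using smoothness of $|z|^{d-\alpha}$ away from the origin, through an absolutely summable expansion.

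\textbf{Small fixes.}
\begin{itemize}
\item Take $E_2=\{b\le m\}\cap Q'$. The set $\{b<m\}\cap Q'$ can be null, e.g.\ when $b$ is constant on $Q'$. The constant-sign property still holds with the non-strict inequality.
\item In your last display the median must be that of $b$ on $Q'$, not on $Q$.
\item That same display is missing a factor $|Q|^{-1}$ on the right. Without it the scaling gives $|Q|^2$ instead of $|Q|$.
\item For a function of $x-y$, the second exponential should be $e^{-i\nu_k\cdot y/\ell(Q)}$.
\end{itemize}
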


\begin{proof}
Assume first that $b\in\BMO$.  The weighted Lebesgue estimate for the fractional commutator is valid for
every $w\in A_{p,p_\alpha}$; see
\cite{Perez1995Commutators,SegoviaTorrea1991}.  Applying
\Cref{thm:LOD} gives
 $$
 \norm{[b,I_\alpha]f}_{L_w^{p_\alpha,s}}
 \lesssim
 \norm{b}_{\BMO}\norm{f}_{L_w^{p,r}},
 \qquad 1\le r\le s\le\infty.
 $$
This proves (i)$\Rightarrow$(ii).

Conversely, choose
$z_0\in\R^d$ so that
$z_0+[-1,1]^d$ does not meet the origin.  As in the classical necessity
argument, the function $|z|^{d-\alpha}$ has, on this cube, an absolutely
convergent Fourier expansion
\begin{equation}\label{eq:weighted-fourier-expansion}
 |z|^{d-\alpha}
 =\sum_{k\in\mathbb Z^d}a_ke^{i\nu_k\cdot z},
 \qquad
 \sum_{k\in\mathbb Z^d}|a_k|<\infty.
\end{equation}

Let $Q=x_0+\ell(-1/2,1/2)^d$ and set
 $$
 Q'=x_0-\ell z_0+\ell(-1/2,1/2)^d.
 $$
For $x\in Q$, define
$\varepsilon_Q(x)=\operatorname{sgn}(b(x)-b_{Q'})$.  The same computation
as in the unweighted Fourier-series argument gives
\begin{equation*}
 \int_Q|b-b_{Q'}|
 =\ell^{-\alpha}
 \sum_{k\in\mathbb Z^d}a_k
 \pairing{[b,I_\alpha]f_k}{g_k},
\end{equation*}
where
 $$
 f_k(y)=e^{-i\nu_k\cdot y/\ell}\charac_{Q'}(y),
 \qquad
 g_k(x)=e^{i\nu_k\cdot x/\ell}
 \varepsilon_Q(x)\charac_Q(x).
 $$
The associate space of $L_w^{p_\alpha,s}$ is
$L_{w^{-1}}^{p_\alpha',s'}$.  Therefore
\begin{align}
 \Big|\pairing{[b,I_\alpha]f_k}{g_k}\Big|
 &\lesssim
 \norm{f_k}_{L_w^{p,r}}
 \norm{g_k}_{L_{w^{-1}}^{p_\alpha',s'}} \notag\\
 &=\norm{w\charac_{Q'}}_{L^{p,r}}
 \norm{w^{-1}\charac_Q}_{L^{p_\alpha',s'}} \notag\\
 &\lesssim
 [w]_{A_{p,p_\alpha}}|Q|^{1+\alpha/d},
 \label{eq:WPB}
\end{align}
where \cref{prop:WTN} was used in the last estimate.  Since
$|Q|=\ell^d$,
 $$
 \ell^{-\alpha}|Q|^{1+\alpha/d}=|Q|.
 $$
Combining this identity with
\eqref{eq:weighted-fourier-expansion}--\eqref{eq:WPB}
yields
 $$
 \frac1{|Q|}\int_Q|b-b_{Q'}|
 \lesssim [w]_{A_{p,p_\alpha}}.
 $$
Finally,
 $$
 |b_Q-b_{Q'}|
 \le\frac1{|Q|}\int_Q|b-b_{Q'}|,
 $$
and hence
 $$
 \frac1{|Q|}\int_Q|b-b_Q|
 \le\frac2{|Q|}\int_Q|b-b_{Q'}|.
 $$
Taking the supremum over $Q$ proves (ii)$\Rightarrow$(i).
\end{proof}

\begin{remark}
Taking $r=s=p$ in \cref{thm:one-weight-BMO-characterization} gives
\begin{equation}\label{eq:sharp-r-equals-p}
 b\in\BMO
 \quad\Longleftrightarrow\quad
 [b,I_\alpha]:L^p(w^p)
 \longrightarrow L_w^{p_\alpha,p}.
\end{equation}
Because $p<p_\alpha$,
 $$
 L_w^{p_\alpha,p}\hookrightarrow
 L_w^{p_\alpha,p_\alpha}=L^{p_\alpha}(w^{p_\alpha}),
 $$
and the inclusion is strict for general functions.

Unweighted Lorentz characterizations and compactness results for fractional
commutators were obtained by Dao \cite{Dao2022} and by Hu and
Yan \cite{HuYan2026}.  Those results concern ordinary Lorentz spaces, whereas
\cref{thm:one-weight-BMO-characterization} treats $L_w^{p,r}$ for every $w\in A_{p,p_\alpha}$ and allows independent secondary
indices $1\le r\le s\le\infty$.  For $w\equiv1$, the space
$L^{p_\alpha,p}$ is the sharp Sobolev--Lorentz space associated with the Riesz
potential; see
\cite{ONeil1963,Alvino1977,GargSpector2014,SpectorVanSchaftingen2019}.
\end{remark}

Combining \cref{lem:weak-multiplier-identity} with \eqref{eq:sharp-r-equals-p} gives the partial answer.

\begin{theorem}\label{thm:family-to-BMO}
Let $0<\alpha<d$, $1<p<d/\alpha$, and define
 $$
 \frac1{p_\alpha}=\frac1p-\frac\alpha d,
 \qquad
 t=\frac d\alpha.
 $$
Let $w\in A_{p,p_\alpha}$, and let $b$ be real-valued and locally integrable.
Suppose that there exists $C_0>0$ such that
\begin{equation}\label{eq:uniform-weak-multiplier-family}
 \norm{[b,I_\alpha]f}_{L^p((wv)^p)}
 \le C_0\norm{v}_{L^{t,\infty}}
 \norm{f}_{L^p(w^p)}
\end{equation}
for every nonnegative
$v\in L^{t,\infty}$.  Then $b\in\BMO$, and
\begin{equation}\label{eq:family-BMO-bound}
 \norm{b}_{\BMO}
 \lesssim_{d,\alpha,p,[w]_{A_{p,p_\alpha}}} C_0.
\end{equation}
\end{theorem}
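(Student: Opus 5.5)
The plan is to turn hypothesis \eqref{eq:uniform-weak-multiplier-family} into the Lorentz-space bound appearing in \eqref{eq:sharp-r-equals-p}, and then apply \cref{thm:one-weight-BMO-characterization} with $r=s=p$, keeping track of constants.

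\textbf{Step 1 (exponents).} Take $P=p_\alpha$ in \cref{lem:weak-multiplier-identity}. Since $p<p_\alpha<\infty$ and
$$
\frac1p-\frac1{p_\alpha}=\frac\alpha d=\frac1t,
$$
the exponent $t=d/\alpha$ appearing there is exactly the one in the present statement.

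\textbf{Step 2 (from the family to a Lorentz norm).} Fix $f\in L^p(w^p)$, say bounded with compact support, and set $F=w\,[b,I_\alpha]f$. Then \cref{lem:weak-multiplier-identity} gives
$$
\norm{[b,I_\alpha]f}_{L_w^{p_\alpha,p}}^p=\norm{F}_{L^{p_\alpha,p}}^p
\approx\sup_{\substack{v\ge0\\ \norm{v}_{L^{t,\infty}}\le1}}\norm{Fv}_{L^p}^p .
$$
Each term in the supremum satisfies
$$
\norm{Fv}_{L^p}=\norm{[b,I_\alpha]f}_{L^p((wv)^p)}\le C_0\norm{f}_{L^p(w^p)}
$$
by \eqref{eq:uniform-weak-multiplier-family}. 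Hence
$$
\norm{[b,I_\alpha]f}_{L_w^{p_\alpha,p}}\lesssim C_0\norm{f}_{L_w^{p,p}},
$$
with implicit constant depending only on $p$ and $\alpha/d$. This is statement (ii) of \cref{thm:one-weight-BMO-characterization} with $r=s=p$, which is admissible since $r\le s$.

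\textbf{Step 3 (quantitative BMO bound).} For \eqref{eq:family-BMO-bound} I would not invoke the theorem as a black box. Instead I would rerun its proof of (ii)$\Rightarrow$(i), where the operator norm enters only through the pairing estimate \eqref{eq:WPB}:
$$
|\pairing{[b,I_\alpha]f_k}{g_k}|\le \norm{[b,I_\alpha]f_k}_{L_w^{p_\alpha,p}}\norm{g_k}_{L_{w^{-1}}^{p_\alpha',p'}}\lesssim C_0\norm{f_k}_{L_w^{p,p}}\norm{g_k}_{L_{w^{-1}}^{p_\alpha',p'}} .
$$
Then \cref{prop:WTN} bounds this by $C_0[w]_{A_{p,p_\alpha}}|Q|^{1+\alpha/d}$. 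Summing over $k$ against the absolutely summable coefficients $a_k$, which depend only on $d$ and $\alpha$, gives
$$
\avgint_Q|b-b_Q|\lesssim C_0,
$$
uniformly in $Q$.

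\textbf{Main obstacle.} The delicate point is applying \cref{lem:weak-multiplier-identity} to $F$, since its finiteness is not known a priori. I would handle this by the Köthe-dual formulation: the supremum on the right controls the $L^{p_\alpha/p,1}$ norm of $|F|^p$ even when it is infinite, possibly after truncating $F$ to $\min\{|F|,N\}\charac_{B(0,N)}$ and using monotone convergence. The test functions $f_k$ are bounded and compactly supported with $f_k\in L^p(w^p)$, because $w^p$ is locally integrable, so the hypothesis applies to them directly.
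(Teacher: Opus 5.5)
Your proposal is correct and follows the paper's proof. You apply \cref{lem:weak-multiplier-identity} with $P=p_\alpha$ to get $[b,I_\alpha]:L_w^{p,p}\to L_w^{p_\alpha,p}$ with norm $\lesssim C_0$, and then use the (ii)$\Rightarrow$(i) argument of \cref{thm:one-weight-BMO-characterization}. Your Step 3, which tracks $C_0$ through the pairing estimate and \cref{prop:WTN}, only makes explicit the quantitative dependence that the paper leaves implicit when it cites \eqref{eq:sharp-r-equals-p}.
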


\begin{proof}
Since
 $$
 \frac1t=\frac\alpha d=\frac1p-\frac1{p_\alpha},
 $$
\cref{lem:weak-multiplier-identity} applies with $P=p_\alpha$.  For a bounded
compactly supported $f$, set
 $$
 F=[b,I_\alpha]f\,w.
 $$
The estimate \eqref{eq:uniform-weak-multiplier-family} gives
 $$
 \norm{Fv}_{L^p}
 \le C_0\norm{v}_{L^{t,\infty}}\norm{fw}_{L^p}
 $$
for every nonnegative $v\in L^{t,\infty}$. Using
\eqref{eq:weak-multiplier-identity}, we obtain
\begin{align*}
   \norm{[b,I_\alpha]f}_{L_w^{p_\alpha,p}}
 &=\norm{F}_{L^{p_\alpha,p}}\approx \sup_{\substack{v\ge0\\ \norm{v}_{L^{t,\infty}}\le1}}
 \norm{Fv}_{L^p}
  \lesssim C_0\norm{fw}_{L^p}
 =C_0\norm{f}_{L_w^{p,p}}.
\end{align*}
Applying  \eqref{eq:sharp-r-equals-p} proves
$b\in\BMO$ and \eqref{eq:family-BMO-bound}.
\end{proof}

\begin{remark}
The proof of \cref{thm:family-to-BMO} is based on Lorentz duality.  We need the condition that for some $w\in A_{p,p_\alpha}$ and all nonnegative $v\in L^{d/\alpha,\infty}$.
This differs from the approach in \cite{WangYin2025}, which is based on extrapolation and therefore requires estimates for all admissible choices
of both $w$ and $v$.
\end{remark}

\subsection{Weighted Hardy--Lorentz inequalities}

The classical Stein--Weiss theorem
\cite{SteinWeiss1958} is particularly well suited to the fixed-multiplier
convention.  The following Lorentz form is obtained by interpolation.

\begin{theorem}\label{thm:lorentz-stein-weiss}
Let $0<\alpha<d$, $1<p\le q<\infty$, and $\beta,\gamma\in\R$.  Assume
\begin{equation}\label{eq:stein-weiss-conditions}
 \beta<\frac dq,
 \qquad
 \gamma<\frac d{p'},
 \qquad
 \beta+\gamma\ge0,
 \qquad
 \frac1q=\frac1p-\frac{\alpha-\beta-\gamma}{d}.
\end{equation}
Then, for every $1\le r\le s\le\infty$,
\begin{equation}\label{eq:lorentz-stein-weiss}
 \norm{|x|^{-\beta}I_\alpha f}_{L^{q,s}}
 \le C\norm{|x|^{\gamma}f}_{L^{p,r}}.
\end{equation}
The constant depends only on
$d,\alpha,p,q,r,s,\beta$, and $\gamma$.
\end{theorem}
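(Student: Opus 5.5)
The plan is to follow the strategy of \cref{thm:LOD}: embed the Lorentz estimate between two Lebesgue Stein--Weiss estimates with the same weights and interpolate. Write $\delta:=\alpha-\beta-\gamma$, so that $1/p-1/q=\delta/d$. Consider the operator $Tg:=|x|^{-\beta}I_\alpha(|x|^{-\gamma}g)$. Then \eqref{eq:lorentz-stein-weiss} is equivalent to $\norm{Tg}_{L^{q,s}}\lesssim\norm{g}_{L^{p,r}}$. Since $T$ is linear and positive, real interpolation applies directly.

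\emph{Step 1: two Lebesgue endpoints.} Recall the classical Stein--Weiss theorem \cite{SteinWeiss1958}. For $1<p_i\le q_i<\infty$ with $\beta<d/q_i$, $\gamma<d/p_i'$, $\beta+\gamma\ge0$ and $1/q_i=1/p_i-\delta/d$, it gives $\norm{Tg}_{L^{q_i}}\lesssim\norm{g}_{L^{p_i}}$. The constraints $\beta<d/q$ and $\gamma<d/p'$ are strict, and the map $p\mapsto q$ defined by $1/q=1/p-\delta/d$ is continuous. Hence we can choose $p_-<p<p_+$, close to $p$, with corresponding $q_-<q<q_+$, such that all the conditions still hold. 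In particular $1<p_\pm$, $q_\pm<\infty$ and $p_\pm\le q_\pm$, the last because $\delta\ge0$ forces $p\le q$ for every such pair. We then fix $\theta\in(0,1)$ with $1/p=(1-\theta)/p_-+\theta/p_+$. Because $1/q_\nu=1/p_\nu-\delta/d$ is affine, the same $\theta$ gives $1/q=(1-\theta)/q_-+\theta/q_+$, exactly as in the proof of \cref{thm:LOD}.

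\emph{Step 2: interpolation.} Apply the real interpolation functor $(\cdot,\cdot)_{\theta,r}$. The unweighted case of \eqref{eq:WI}, or the Lions--Peetre/Marcinkiewicz theorem, identifies $(L^{p_-},L^{p_+})_{\theta,r}=L^{p,r}$ and $(L^{q_-},L^{q_+})_{\theta,r}=L^{q,r}$. This gives $\norm{Tg}_{L^{q,r}}\lesssim\norm{g}_{L^{p,r}}$ for every $1\le r\le\infty$. Finally, \cref{lem:L}(i) with $r\le s$ yields $L^{q,r}\hookrightarrow L^{q,s}$, which is \eqref{eq:lorentz-stein-weiss}. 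The constants depend only on the listed parameters, since $p_\pm$ and $\theta$ are chosen in terms of them.

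\emph{Main obstacle.} The delicate point is Step 1 in the diagonal case $p=q$, that is, $\delta=0$. There $p_\pm=q_\pm$, and the Stein--Weiss theorem as usually stated requires $\beta+\gamma\ge0$ together with $\alpha=\beta+\gamma$. This case is still covered: it is the weighted Hardy-type boundedness on $L^p$, valid under the same strict inequalities, so the perturbation argument goes through unchanged. I would verify this by recalling that the Stein--Weiss conditions are open in $p$ for fixed $\alpha,\beta,\gamma$. If any doubt remains, the alternative is a direct Marcinkiewicz argument: use the strong bounds at $p_\pm$ as weak-type bounds and run the off-diagonal Marcinkiewicz interpolation theorem, which yields $L^{p,r}\to L^{q,r}$ as well.
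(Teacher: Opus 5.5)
Your proposal is correct and follows essentially the same route as the paper: you perturb $p$ to $p_\pm$ using the strictness of $\beta<d/q$ and $\gamma<d/p'$, apply the classical Stein--Weiss bounds to $Tg=|x|^{-\beta}I_\alpha(|x|^{-\gamma}g)$, interpolate with $(\cdot,\cdot)_{\theta,r}$ to get $L^{p,r}\to L^{q,r}$, and finish with $L^{q,r}\hookrightarrow L^{q,s}$. Your worry about the diagonal case is unnecessary: the classical Stein--Weiss theorem already allows $p=q$, which forces $\alpha=\beta+\gamma$, so no separate argument (and no Marcinkiewicz fallback) is needed there.
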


\begin{proof}
Set
 $$
 \delta:=\frac1p-\frac1q
 =\frac{\alpha-\beta-\gamma}{d}\ge0.
 $$
Choose $p_-<p<p_+$ sufficiently close to $p$ and define $q_-$ and $q_+$ by
 $$
 \frac1{p_-}-\frac1{q_-}
 =\frac1p-\frac1q
 =\frac1{p_+}-\frac1{q_+}=\delta.
 $$
Because all inequalities in \eqref{eq:stein-weiss-conditions} involving
$p$ and $q$ are strict, the pairs may be chosen so that
 $$
 \beta<\frac d{q_\pm},
 \qquad
 \gamma<\frac d{p_\pm'}.
 $$
Consequently, the
classical theorem gives
\begin{equation}\label{eq:stein-weiss- }
 \norm{|x|^{-\beta}I_\alpha f}_{L^{q_\pm}}
 \le C_\pm\norm{|x|^\gamma f}_{L^{p_\pm}}.
\end{equation}
Define
 $$
 Tg(x):=|x|^{-\beta}I_\alpha\big(|\cdot|^{-\gamma}g\big)(x).
 $$
Then \eqref{eq:stein-weiss- } says that
$T:L^{p_\pm}\to L^{q_\pm}$ is bounded.  Choose $0<\theta<1$ so that
 $$
 \frac1p=\frac{1-\theta}{p_-}+\frac\theta{p_+}.
 $$
The equality above implies
 $$
 \frac1q=\frac{1-\theta}{q_-}+\frac\theta{q_+}.
 $$
Real interpolation therefore yields
 $$
 \norm{Tg}_{L^{q,r}}\lesssim\norm{g}_{L^{p,r}},
 \qquad 1\le r\le\infty.
 $$
Taking $g=|\cdot|^\gamma f$ gives
 $$
 \norm{|x|^{-\beta}I_\alpha f}_{L^{q,r}}
 \lesssim\norm{|x|^\gamma f}_{L^{p,r}}.
 $$
Finally, $L^{q,r}\hookrightarrow L^{q,s}$ for $r\le s$,
\eqref{eq:lorentz-stein-weiss} is proved.
\end{proof}

Applying the Stein--Weiss estimate to Riesz potentials gives the corresponding Hardy--Sobolev inequality.

\begin{corollary}
Under the assumptions of \cref{thm:lorentz-stein-weiss}, for every
$1\le r\le s\le\infty$ and $u\in\mathcal S(\R^d)$,
\begin{equation*}
 \norm{|x|^{-\beta}u}_{L^{q,s}}
 \le C
 \norm{|x|^\gamma(-\Delta)^{\alpha/2}u}_{L^{p,r}}.
\end{equation*}
\end{corollary}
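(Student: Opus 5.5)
The plan is to write $u$ as a Riesz potential of $(-\Delta)^{\alpha/2}u$ and then invoke \cref{thm:lorentz-stein-weiss}. For $u\in\mathcal S(\R^d)$ and $0<\alpha<d$, set $f:=(-\Delta)^{\alpha/2}u$, defined on the Fourier side by $\widehat f(\xi)=|\xi|^\alpha\widehat u(\xi)$. The inverse relation is
\begin{equation*}
 u=c_{d,\alpha}\,I_\alpha f,
 \qquad
 c_{d,\alpha}=\frac{\Gamma((d-\alpha)/2)}{\pi^{d/2}2^\alpha\Gamma(\alpha/2)}.
\end{equation*}
This identity holds because the kernel $|x|^{\alpha-d}$ has Fourier transform equal to a constant multiple of $|\xi|^{-\alpha}$, so that
\begin{equation*}
 \widehat{I_\alpha f}(\xi)=C|\xi|^{-\alpha}\widehat f(\xi)=C\widehat u(\xi).
\end{equation*}

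The first step is to justify this identity pointwise. The function $\widehat f=|\xi|^\alpha\widehat u$ is integrable and decays rapidly. It is, however, not smooth at the origin, so $f$ is not Schwartz. Instead $f$ is smooth and satisfies $|f(x)|\lesssim(1+|x|)^{-d-\alpha}$. This is the standard decay estimate for fractional Laplacians of Schwartz functions, and it follows from the singular-integral representation of $(-\Delta)^{\alpha/2}$. With this decay, $I_\alpha f$ converges absolutely. The identity $u=c_{d,\alpha}I_\alpha f$ can then be checked by pairing both sides with a Schwartz test function $\varphi$. This reduces to Parseval together with the distributional formula $\widehat{|x|^{\alpha-d}}=C|\xi|^{-\alpha}$, which is valid for $0<\alpha<d$. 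Since $|\xi|^{-\alpha}$ is locally integrable, all the integrals are finite.

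The second step is to apply \cref{thm:lorentz-stein-weiss} to this $f$, which gives
\begin{equation*}
 \norm{|x|^{-\beta}u}_{L^{q,s}}
 =c_{d,\alpha}\norm{|x|^{-\beta}I_\alpha f}_{L^{q,s}}
 \le C\norm{|x|^\gamma f}_{L^{p,r}}.
\end{equation*}
This is the claimed inequality. If the right-hand side is infinite, there is nothing to prove. The theorem as stated needs no qualitative restriction on $f$ beyond the finiteness of the right-hand side. If it is used only for nice $f$, a truncation-and-monotone-convergence argument applied to $I_\alpha(|f|)$ suffices, because $|I_\alpha f|\le I_\alpha|f|$ and the Lorentz norms satisfy the Fatou property.

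The main obstacle is the first step, namely the rigorous identification $u=c\,I_\alpha f$ when $f$ is not Schwartz. I would settle it through the decay bound $|f(x)|\lesssim(1+|x|)^{-d-\alpha}$ and the Parseval pairing above. The rest of the argument is a direct invocation of \cref{thm:lorentz-stein-weiss}.
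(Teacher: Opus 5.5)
Your proposal is correct and is essentially the paper's proof: write $u=c_{d,\alpha}I_\alpha\bigl((-\Delta)^{\alpha/2}u\bigr)$ and apply \cref{thm:lorentz-stein-weiss} to $f=(-\Delta)^{\alpha/2}u$. The paper simply cites the Fourier multiplier identity, while you also justify it through the decay of $f$, absolute convergence of $I_\alpha f$ and pairing with test functions. One small caveat on that extra care: the singular-integral representation gives the decay of $f$ directly only for $0<\alpha<2$, but for larger $\alpha$ your argument only needs some decay ensuring $f\in L^1$, for instance via $(-\Delta)^{\alpha/2}=(-\Delta)^{(\alpha-2k)/2}(-\Delta)^k$.
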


\begin{proof}
For $u\in\mathcal S(\R^d)$, the Fourier multiplier identity gives
 $$
 u=\kappa_{d,\alpha}I_\alpha\big(( -\Delta)^{\alpha/2}u\big)
 $$
with a dimensional normalization constant $\kappa_{d,\alpha}>0$.
Apply \cref{thm:lorentz-stein-weiss} to
$f=(-\Delta)^{\alpha/2}u$.
\end{proof}

The standard Hardy inequality
\begin{equation*}
 \Big\|\frac{u}{|x|}\Big\|_{L^p}
 \le \frac{p}{d-p}\|\nabla u\|_{L^p},
 \qquad 1<p<d,
\end{equation*}
fails at $p=d$.  The Lorentz refinement gives the following endpoint replacement.

\begin{theorem}\label{thm:critical-hardy-lorentz}
Let $d\ge2$ and $d\le q<\infty$.  Then, for every
$u\in C_c^\infty(\R^d)$,
\begin{equation}\label{eq:critical-hardy-lorentz}
 \big\||x|^{-d/q}u\big\|_{L^{q,\infty}}
 \le C_{d,q}\|\nabla u\|_{L^{d,1}}.
\end{equation}
For $q=d$ this is
\begin{equation*}
 \Big\|\frac{u}{|x|}\Big\|_{L^{d,\infty}}
 \le C_d\|\nabla u\|_{L^{d,1}}.
\end{equation*}
For every $r>1$, the right-hand side in
\eqref{eq:critical-hardy-lorentz} cannot be replaced uniformly by
$\|\nabla u\|_{L^{d,r}}$.
\end{theorem}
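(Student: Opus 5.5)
The plan has two parts: the positive estimate \eqref{eq:critical-hardy-lorentz}, and the failure of the estimate when $L^{d,1}$ is replaced by $L^{d,r}$ with $r>1$.

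\emph{Positive estimate.} We first reduce to the endpoint $q=\infty$, where the claim is the classical embedding $\dot W^{1,(d,1)}\hookrightarrow L^\infty$. For $u\in C_c^\infty$ we have the pointwise bound $|u(x)|\lesssim I_1(|\nabla u|)(x)$. O'Neil's inequality, or equivalently \cref{lem:L}(ii) together with $|x-\cdot|^{1-d}\in L^{d',\infty}$, then gives
\begin{equation*}
 \|u\|_{L^\infty}\lesssim \big\||x|^{1-d}\big\|_{L^{d/(d-1),\infty}}\|\nabla u\|_{L^{d,1}}.
\end{equation*}
Next we use $|x|^{-d/q}\in L^{q,\infty}$ with norm $C_{d,q}$, and the multiplier bound
\begin{equation*}
 \big\||x|^{-d/q}u\big\|_{L^{q,\infty}}\le \|u\|_{L^\infty}\big\||x|^{-d/q}\big\|_{L^{q,\infty}}.
\end{equation*}
This bound is immediate from the rearrangement inequality $(fg)^*(t)\le \|g\|_\infty f^*(t)$. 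Chaining the two displays proves \eqref{eq:critical-hardy-lorentz} for every $d\le q<\infty$; the case $q=d$ is the displayed special case. The hypothesis $d\ge2$ is used only so that $L^{d,1}$ has $d>1$ and $I_1$ makes sense. An alternative route is \cref{thm:lorentz-stein-weiss} with $\gamma=0$ and $\beta=d/q$. That route fails, however, since $\beta<d/q$ is strict there, so the $L^\infty$ route is the one to use.

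\emph{Sharpness for $r>1$.} We use a log-log counterexample. Fix $\frac1r<a<1$, or any $a$ with $ar>1$ and $a\le 1$, and take
\begin{equation*}
 u_\varepsilon(x)=\phi\Big(\big(\log(e/|x|)\big)^{1-a}\Big)\quad\text{cut off near }0\text{ and at }|x|=e^{-2}.
\end{equation*}
More concretely, let $v(x)=(\log(e/|x|))^{1-a}\mathbf1_{\{|x|<e^{-2}\}}$, suitably smoothed and truncated at height $N$. Then
\begin{equation*}
 |\nabla v|\approx |x|^{-1}\big(\log(e/|x|)\big)^{-a},\qquad |\nabla v|^*(t)\approx t^{-1/d}\big(\log(e/t)\big)^{-a},
\end{equation*}
so $\|\nabla v\|_{L^{d,r}}<\infty$ precisely because $ar>1$, exactly as in \cref{ex:r-greater-s}. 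On the other hand $v$ is unbounded near the origin. Consequently $|x|^{-d/q}v\gtrsim |x|^{-d/q}(\log(e/|x|))^{1-a}$, and $t^{1/q}(|x|^{-d/q}v)^*(t)\approx(\log(e/t))^{1-a}\to\infty$ as $t\to0$, so the weak norm is infinite. The case $a=1$ is handled the same way, with $\log\log$ in place of the power $(\log)^{1-a}$.

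To make this a uniform-constant failure for $C_c^\infty$ functions, we pass to truncations. We set $u_N=\min\{v,N\}$, mollified and multiplied by a fixed cutoff. Truncation only decreases $|\nabla v|$ pointwise, so $\sup_N\|\nabla u_N\|_{L^{d,r}}<\infty$, while $\||x|^{-d/q}u_N\|_{L^{q,\infty}}\gtrsim N^{c}\to\infty$. The main obstacle is this bookkeeping. It requires checking that mollification and the outer cutoff keep the $L^{d,r}$ norms uniformly bounded; we handle this by taking the cutoff region fixed and using $\|f*\rho_\delta\|_{L^{d,r}}\lesssim\|f\|_{L^{d,r}}$. It also requires a lower bound on the weak norm of $|x|^{-d/q}u_N$ on the region where $u_N\ge N/2$, which is a ball of radius $\approx\exp(-N^{1/(1-a)})$.
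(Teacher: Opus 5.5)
Your proof is correct. The positive estimate is the paper's own argument: the pointwise bound $|u|\lesssim I_1(|\nabla u|)$, the endpoint bound $\|I_1g\|_{L^\infty}\lesssim\|g\|_{L^{d,1}}$, and $|x|^{-d/q}\in L^{q,\infty}$. You are also right that \cref{thm:lorentz-stein-weiss} is unavailable here, since it needs $\beta<d/q$ strictly.

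For sharpness you use a different test family.
\begin{itemize}
\item \emph{The paper's construction.} It takes $\log(1/|x|)$ capped at height $\log R$ on $B(0,R^{-1})$ and cut off near $|x|=1$. Since $\nabla\log(1/|x|)$ lies only in $L^{d,\infty}$, the paper must compute $\|\nabla u_R\|_{L^{d,r}}^r\approx\log R$ and renormalize by $(\log R)^{-1/r}$. The weighted weak norm is then at least $|B(0,1)|^{1/q}(\log R)^{1-1/r}$.
\item \emph{Your construction.} You take the milder profile $(\log(e/|x|))^{1-a}$ with $1/r<a<1$, whose gradient already lies in $L^{d,r}$ although the function is unbounded, and truncate at height $N$. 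The uniform gradient bound is then immediate, because truncation decreases the gradient pointwise. The lower bound $\gtrsim N$ comes from the same scale invariance $|B(0,\rho)|^{1/q}\rho^{-d/q}=|B(0,1)|^{1/q}$ that the paper uses on $B(0,R^{-1})$.
\end{itemize}
The paper's version gives an explicit blow-up rate with no auxiliary exponent to tune. Yours isolates the real obstruction: $\||x|^{-d/q}u\|_{L^{q,\infty}}\gtrsim|u(0)|$ for continuous $u$, while $\nabla u\in L^{d,r}$ with $r>1$ does not force boundedness. The price is the mollification bookkeeping.

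In that bookkeeping, keep the support of your smooth cutoff away from the jump of $\charac_{\{|x|<e^{-2}\}}$, or simply replace the indicator by a smooth $\chi$ from the start. Otherwise, mollifying at scale $\delta\to0$ (which you need, since $\delta$ must be small compared with the truncation radius) creates gradients of size about $\delta^{-1}$ on a shell of width $\delta$. Their $L^{d,r}$ norm is about $\delta^{1/d-1}$, which blows up. With a smooth cutoff, $\chi\min\{v,N\}$ is Lipschitz, the gradient commutes with the mollifier, and your uniform bound goes through.
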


\begin{proof}
The pointwise Sobolev representation gives
\begin{equation}\label{eq:pointwise-sobolev-critical}
 |u(x)|\lesssim I_1(|\nabla u|)(x).
\end{equation}
O'Neil's endpoint convolution inequality yields
\begin{equation}\label{eq:oneil-endpoint-I1}
 \|I_1g\|_{L^\infty}\lesssim\|g\|_{L^{d,1}}.
\end{equation}
Since $|x|^{-d/q}\in L^{q,\infty}(\R^d)$, multiplication of
\eqref{eq:pointwise-sobolev-critical} by $|x|^{-d/q}$ and
\eqref{eq:oneil-endpoint-I1} prove \eqref{eq:critical-hardy-lorentz}.

Fix $r>1$ and let $R>e^2$. Choose a smooth radial function
$u_R\in C_c^\infty(\mathbb R^d)$ satisfying
\[
 u_R(x)=
 \begin{cases}
  \log R, & |x|\le R^{-1},\\
  \log(1/|x|), & 2R^{-1}\le |x|\le \frac12,\\
  0, & |x|\ge1,
 \end{cases}
\]
 and set
 $$
 v_R:=(\log R)^{-1/r}u_R.
 $$
On $2R^{-1}\le |x|\le \frac12$,
 $$
 |\nabla u_R(x)|=\frac1{|x|}.
 $$
Consequently, there exist constants $c_0,c_1,C_1>0$, independent of $R$,
such that
 $$
 c_1t^{-1/d}
 \le (|\nabla u_R|)^*(t)
 \le C_1t^{-1/d},
 \qquad
 c_0R^{-d}\le t\le c_0^{-1}.
 $$
Hence
\begin{align*}
 \|\nabla u_R\|_{L^{d,r}}^r
 &\approx
 \int_{c_0R^{-d}}^{c_0^{-1}}
 \bigl[t^{1/d}t^{-1/d}\bigr]^r\,\frac{dt}{t} =
 \int_{c_0R^{-d}}^{c_0^{-1}}\frac{dt}{t}
 \approx \log R.
\end{align*}
Therefore,
 $$
 \|\nabla v_R\|_{L^{d,r}}^r
 =(\log R)^{-1}\|\nabla u_R\|_{L^{d,r}}^r
 \approx 1,
 $$
and thus
 $$
 \|\nabla v_R\|_{L^{d,r}}\approx1.
 $$

On the ball $B_R:=B(0,R^{-1})$ one has
 $$
 v_R(x)=(\log R)^{1-1/r},
 $$
and therefore
 $$
 |x|^{-d/q}|v_R(x)|
 \ge
 R^{d/q}(\log R)^{1-1/r},
 \qquad x\in B_R.
 $$
By the definition of the weak Lorentz norm,
\begin{align*}
 \bigl\||x|^{-d/q}v_R\bigr\|_{L^{q,\infty}}
 &\ge
 R^{d/q}(\log R)^{1-1/r}|B_R|^{1/q}\\
 &=|B(0,1)|^{1/q}(\log R)^{1-1/r}
 \longrightarrow\infty
\end{align*}
as $R\to\infty$. Since
 $$
 \sup_{R>e^2}\|\nabla v_R\|_{L^{d,r}}<\infty,
 $$
there is no constant $C>0$ such that
 $$
 \bigl\||x|^{-d/q}u\bigr\|_{L^{q,\infty}}
 \le C\|\nabla u\|_{L^{d,r}}
 $$
holds for all $u\in C_c^\infty(\mathbb R^d)$.
\end{proof}

\begin{theorem}
Let $d\ge2$, $d\le q<\infty$, and let $\beta,\gamma>0$ satisfy $0<\gamma\le d/q$, $\gamma<d-1$ and
\begin{equation}\label{eq:critical-power-balance}
 \beta+\gamma=\frac dq.
\end{equation}
Then, for every $1\le r\le s\le\infty$,
\begin{equation*}
 \big\||x|^{-\beta}u\big\|_{L^{q,s}}
 \lesssim
 \big\||x|^\gamma\nabla u\big\|_{L^{d,r}}.
\end{equation*}
Moreover, the relation in \eqref{eq:critical-power-balance} is necessary.
\end{theorem}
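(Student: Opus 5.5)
The plan is to reduce the estimate to \cref{thm:lorentz-stein-weiss} with $\alpha=1$, using the pointwise Sobolev representation $|u(x)|\lesssim I_1(|\nabla u|)(x)$ already used in the proof of \cref{thm:critical-hardy-lorentz}. Applying \cref{thm:lorentz-stein-weiss} with $\alpha=1$, $p=d$ and the given $\beta,\gamma$ yields
\begin{equation*}
 \big\||x|^{-\beta}u\big\|_{L^{q,s}}
 \lesssim\big\||x|^{-\beta}I_1(|\nabla u|)\big\|_{L^{q,s}}
 \lesssim\big\||x|^{\gamma}\nabla u\big\|_{L^{d,r}},
\end{equation*}
since $L^{q,s}$ is a lattice. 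So the work is to verify the hypotheses \eqref{eq:stein-weiss-conditions} in this setting.

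\textbf{Checking the hypotheses.} The scaling relation $1/q=1/d-(1-\beta-\gamma)/d$ is equivalent to $\beta+\gamma=d/q$, which is exactly \eqref{eq:critical-power-balance}. The sign condition $\beta+\gamma\ge0$ is immediate. The condition $\gamma<d/p'=d-1$ is assumed. The condition $\beta<d/q$ follows from $\beta=d/q-\gamma$ with $\gamma>0$; this is where the strict positivity of $\gamma$ is used. The only mismatch is that \cref{thm:lorentz-stein-weiss} assumes $1<p\le q<\infty$, which holds because $p=d\le q$, and $d\ge2$ gives $p>1$.

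\textbf{The expected obstacle.} The proof of \cref{thm:lorentz-stein-weiss} perturbs $p$ to $p_\pm$ and requires the strict inequalities to survive. Both are strict here, so this step is fine. The delicate endpoint $\gamma=d/q$, i.e.\ $\beta=0$, is also harmless because $\beta<d/q$ still holds. I would additionally note that the classical Stein--Weiss theorem permits negative $\beta$, so no condition on the sign of $\beta$ is needed beyond the stated $\beta>0$.

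\textbf{Necessity of \eqref{eq:critical-power-balance}.} I would use a dilation argument. Put $u_\lambda(x)=u(\lambda x)$ for a fixed nonzero $u\in C_c^\infty$. Then
\begin{equation*}
 \||x|^{-\beta}u_\lambda\|_{L^{q,s}}=\lambda^{\beta-d/q}\||x|^{-\beta}u\|_{L^{q,s}},
 \qquad
 \||x|^{\gamma}\nabla u_\lambda\|_{L^{d,r}}=\lambda^{1-\gamma-1}\||x|^\gamma\nabla u\|_{L^{d,r}}=\lambda^{-\gamma}\||x|^\gamma\nabla u\|_{L^{d,r}}.
\end{equation*}
Both quantities are finite and positive by the ranges of $\beta,\gamma$. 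A uniform inequality therefore forces $\lambda^{\beta+\gamma-d/q}$ to stay bounded as $\lambda\to0$ and as $\lambda\to\infty$, which gives $\beta+\gamma=d/q$.
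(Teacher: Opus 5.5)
Your proposal is correct and follows essentially the same route as the paper. You reduce to \cref{thm:lorentz-stein-weiss} with $\alpha=1$, $p=d$ after checking its hypotheses, and you use the same dilation argument for necessity; you also make explicit the pointwise bound $|u|\lesssim I_1(|\nabla u|)$, which the paper leaves implicit. In the necessity step, taking $u\in C_c^\infty(\R^d\setminus\{0\})$ nonzero makes both norms finite and positive for arbitrary $\beta,\gamma$, which justifies your remark that both quantities are finite and positive.
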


\begin{proof}
Set
 $$
 \alpha=1,\qquad p=d,\qquad
 \beta+\gamma=\frac dq .
 $$
Then
 $$
 \frac1q=\frac1d-\frac{1-\beta-\gamma}{d},
 $$
which is precisely the condition in
\cref{thm:lorentz-stein-weiss}. Moreover,
 $$
 \beta=\frac dq-\gamma<\frac dq,
 \qquad
 \gamma<\frac d{d'}=d-1.
 $$
Hence \cref{thm:lorentz-stein-weiss} gives
 $$
 \bigl\||x|^{-\beta}u\bigr\|_{L^{q,s}}
 \lesssim
 \bigl\||x|^\gamma\nabla u\bigr\|_{L^{d,r}}.
 $$

It remains to prove the necessity of
\eqref{eq:critical-power-balance}. Suppose that
 $$
 \big\||x|^{-\beta}u\big\|_{L^{q,s}}
 \le C
 \big\||x|^\gamma\nabla u\big\|_{L^{d,r}}
 $$
holds for every  $u\in C_c^\infty(\mathbb R^d) $ . Define
 $$
 u_\lambda(x):=u(\lambda x),
 \qquad \lambda>0.
 $$
The dilation rule for Lorentz norms gives
 $$
 \big\||x|^{-\beta}u_\lambda\big\|_{L^{q,s}}
 =
 \lambda^{\beta-d/q}
 \big\||x|^{-\beta}u\big\|_{L^{q,s}},
 $$
whereas
 $$
 \big\||x|^\gamma\nabla u_\lambda\big\|_{L^{d,r}}
 =
 \lambda^{-\gamma}
 \big\||x|^\gamma\nabla u\big\|_{L^{d,r}}.
 $$
Therefore,
 $$
 \lambda^{\beta+\gamma-d/q}
 \big\||x|^{-\beta}u\big\|_{L^{q,s}}
 \le
 C\big\||x|^\gamma\nabla u\big\|_{L^{d,r}}
 \qquad\text{for every }\lambda>0.
 $$
Letting  $\lambda\to0 $  or  $\lambda\to\infty $  shows that
 $$
 \beta+\gamma-\frac dq=0.
 $$
Thus \eqref{eq:critical-power-balance} is necessary.
\end{proof}

\subsection{Fractional Schr\"odinger equations}

We next consider the fractional Schr\"odinger equation
\begin{equation}\label{eq:fractional-schrodinger-general}
 (-\Delta)^{\alpha/2}u-a(x)u=f
 \qquad\text{in }\mathcal S'(\R^d).
\end{equation}
A mild solution of \eqref{eq:fractional-schrodinger-general} is a function
satisfying
 $$
 u=\mathcal (-\Delta)^{-\alpha/2}(au+f).
 $$
Under \eqref{eq:stein-weiss-conditions}, assume further that
 $$
 \delta:=\alpha-\beta-\gamma>0,
 \qquad
 t:=\frac d\delta,
 \qquad
 A(x):=|x|^{\beta+\gamma}a(x),
 $$
and define
 $$
 X_\beta^{q,\rho}
 :=\{u:|x|^{-\beta}u\in L^{q,\rho}(\R^d)\}.
 $$

\begin{theorem}\label{thm:hardy-schrodinger}
Let $1\le\rho\le\infty$ and assume
$
 A\in L^{t,\infty}.
$
There exists $\varepsilon_0>0$ such
that
\begin{equation}\label{eq:small-bridge-potential}
 \norm{A}_{L^{t,\infty}}<\varepsilon_0,
\end{equation}
then \eqref{eq:fractional-schrodinger-general} has a unique mild
solution $u\in X_\beta^{q,\rho}$ for every
$|x|^\gamma f\in L^{p,\rho}$. Moreover,
\begin{equation}\label{eq:hardy-schrodinger-resolvent}
 \norm{|x|^{-\beta}u}_{L^{q,\rho}}
 \le
 \frac{C}{1-C\norm{A}_{L^{t,\infty}}}
 \norm{|x|^\gamma f}_{L^{p,\rho}}.
\end{equation}
\end{theorem}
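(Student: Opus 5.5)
The plan is to recast the mild formulation as a fixed-point problem in $X_\beta^{q,\rho}$ and apply the contraction mapping principle. Since $(-\Delta)^{-\alpha/2}=\kappa_{d,\alpha}I_\alpha$, a mild solution is a fixed point of
$$
\Phi(u):=\kappa_{d,\alpha}I_\alpha(au+f).
$$
By \cref{thm:lorentz-stein-weiss} with $r=s=\rho$, which is admissible since \eqref{eq:stein-weiss-conditions} is assumed, the operator $\kappa_{d,\alpha}I_\alpha$ maps $\{g:|x|^\gamma g\in L^{p,\rho}\}$ into $X_\beta^{q,\rho}$. Its norm is at most a constant $C_1$ depending on $d,\alpha,p,q,\rho,\beta,\gamma$. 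Hence the source term already satisfies
$$
\||x|^{-\beta}\kappa_{d,\alpha}I_\alpha f\|_{L^{q,\rho}}\le C_1\||x|^\gamma f\|_{L^{p,\rho}}.
$$

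The key step is the multiplier bound for the potential term. Write
$$
|x|^\gamma a u=\big(|x|^{\beta+\gamma}a\big)\big(|x|^{-\beta}u\big)=A\cdot |x|^{-\beta}u .
$$
The exponents satisfy $\frac1p=\frac1q+\frac{\delta}{d}=\frac1q+\frac1t$, which is exactly the last relation in \eqref{eq:stein-weiss-conditions}. Since $\delta>0$, this gives $p<q$ and $1/p<1$. O'Neil's inequality, \cref{lem:L}(iv), applies with $(a_0,b_0)=(t,\infty)$, $(a_1,b_1)=(q,\rho)$ and $b=\rho$, because $\frac1\rho\le 0+\frac1\rho$. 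It gives
$$
\||x|^\gamma a u\|_{L^{p,\rho}}\le C_2\|A\|_{L^{t,\infty}}\||x|^{-\beta}u\|_{L^{q,\rho}}.
$$
This requires $1<t,q<\infty$. Finiteness of $t$ holds because $\delta>0$, and $t>1$ follows from $\frac1t=\frac1p-\frac1q<1$.

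Combining the two estimates, the map $\Phi$ sends $X_\beta^{q,\rho}$ to itself. Since $\Phi(u)-\Phi(v)=\kappa_{d,\alpha}I_\alpha(a(u-v))$, we get
$$
\||x|^{-\beta}(\Phi(u)-\Phi(v))\|_{L^{q,\rho}}\le C\|A\|_{L^{t,\infty}}\||x|^{-\beta}(u-v)\|_{L^{q,\rho}},
\qquad C=C_1C_2 .
$$
Choose $\varepsilon_0=1/C$. Then $\Phi$ is a strict contraction on $X_\beta^{q,\rho}$.

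There is one subtlety to handle. For $\rho<1$ the space would only be quasi-Banach, but here $\rho\ge1$. The space $L^{q,\rho}$ with $q>1$ is complete, and it is normable using the $f^{**}$ norm, which is equivalent to the one used in the paper. Completeness is all the contraction principle needs; the quasi-norm constant is absorbed into $C$. Alternatively, one can avoid the fixed-point theorem and sum the Neumann series $u=\sum_{k\ge0}(\kappa_{d,\alpha}I_\alpha a)^k\kappa_{d,\alpha}I_\alpha f$. This series converges in the complete space because the operator norm of $\kappa_{d,\alpha}I_\alpha a$ is at most $C\|A\|_{L^{t,\infty}}<1$.

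The contraction gives existence and uniqueness of the fixed point in $X_\beta^{q,\rho}$. For the resolvent estimate \eqref{eq:hardy-schrodinger-resolvent}, take norms in $u=\Phi(u)$:
$$
\||x|^{-\beta}u\|\le C\|A\|_{L^{t,\infty}}\||x|^{-\beta}u\|+C\||x|^\gamma f\|_{L^{p,\rho}}.
$$
This inequality can be rearranged because $u$ is known a priori to lie in $X_\beta^{q,\rho}$, so the left side is finite. Rearranging yields \eqref{eq:hardy-schrodinger-resolvent}, after enlarging $C$ if necessary.

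The main obstacle is the justification that \cref{thm:lorentz-stein-weiss} applies to general $g$ with $|x|^\gamma g\in L^{p,\rho}$, for instance $g=au$, rather than only to nice $f$. It is also needed for $\rho=\infty$, where simple functions are not dense. I would first prove the estimate for nonnegative $g$ by monotone convergence applied to truncations, since $I_\alpha$ is a positive operator and the Lorentz quasi-norms have the Fatou property. I would then extend to general $g$ by splitting into positive and negative parts. This ensures $I_\alpha(au+f)$ is defined as an absolutely convergent integral almost everywhere, so the mild formulation makes sense.
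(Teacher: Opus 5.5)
Your proposal is correct and follows the paper's own route. Both run a Banach fixed-point argument for $u\mapsto(-\Delta)^{-\alpha/2}(au+f)$ in $X_\beta^{q,\rho}$, combining O'Neil's inequality ($A\in L^{t,\infty}$, $\frac1p=\frac1q+\frac1t$) with \cref{thm:lorentz-stein-weiss} at $r=s=\rho$, and then absorb the potential term to obtain \eqref{eq:hardy-schrodinger-resolvent}. Your extra remarks only make explicit what the paper leaves implicit: normability and completeness of $L^{q,\rho}$, extending the Stein--Weiss bound to general $g$ via positivity and the Fatou property, and a priori finiteness before rearranging.
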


\begin{proof}
Equip $X_{\beta}^{q,\rho}$ with the norm
 $$
 \norm{u}_{X_{\beta}^{q,\rho}}
 :=\norm{|x|^{-\beta}u}_{L^{q,\rho}}.
 $$
Since $q>1$, this Lorentz space has an equivalent Banach norm, including the
secondary endpoints.  Define
\begin{equation*}
 \mathcal T u:=\mathcal (-\Delta)^{-\alpha/2} f+\mathcal (-\Delta)^{-\alpha/2}(au).
\end{equation*}
The definitions of $\delta$ and $t$, together with
\eqref{eq:stein-weiss-conditions}, give
\begin{equation*}
 \frac1p=\frac1q+\frac1t.
\end{equation*}
It follows from
 $
 |x|^\gamma a(x)u(x)
 =A(x)|x|^{-\beta}u(x)
 $
that
\begin{equation*}
 \norm{|x|^\gamma au}_{L^{p,\rho}}
 \lesssim
 \norm{A}_{L^{t,\infty}}
 \norm{|x|^{-\beta}u}_{L^{q,\rho}}.
\end{equation*}
Applying \cref{thm:lorentz-stein-weiss} with $r=s=\rho$ gives
\begin{align}
 \norm{\mathcal T u}_{X_{\beta}^{q,\rho}}
 &\le C\norm{|x|^\gamma f}_{L^{p,\rho}}
   +C\norm{|x|^\gamma au}_{L^{p,\rho}} \notag\\
 &\le C\norm{|x|^\gamma f}_{L^{p,\rho}}
   +C\norm{A}_{L^{t,\infty}}
    \norm{u}_{X_{\beta}^{q,\rho}}.
 \label{eq:fixed-point-bound}
\end{align}
The same computation applied to $u-v$ gives
\begin{equation*}
 \norm{\mathcal T u-\mathcal T v}_{X_{\beta}^{q,\rho}}
 \le C\norm{A}_{L^{t,\infty}}
 \norm{u-v}_{X_{\beta}^{q,\rho}}.
\end{equation*}
Choose $\varepsilon_0$ so that $C\norm{A}_{L^{t,\infty}}\leq C\varepsilon_0<1$.  Banach's fixed-point theorem gives a unique $u$ satisfying
\begin{equation*}
 u=\mathcal (-\Delta)^{-\alpha/2} f+\mathcal (-\Delta)^{-\alpha/2}(au).
\end{equation*}
Applying $(-\Delta)^{\alpha/2}$ in $\mathcal S'$ proves
\eqref{eq:fractional-schrodinger-general}.  Moving the last term in
\eqref{eq:fixed-point-bound} to the left gives
\eqref{eq:hardy-schrodinger-resolvent}.
\end{proof}

The critical Hardy potential is obtained.

\begin{corollary}
Under the assumptions of \cref{thm:hardy-schrodinger}, there exists
$\lambda_0>0$ such that, for every $|\lambda|<\lambda_0$ and every
$f$ satisfying $|x|^\gamma f\in L^{p,\rho}$, the critical fractional
Hardy--Schr\"odinger equation
\begin{equation*}
 (-\Delta)^{\alpha/2}u
 -\lambda|x|^{-\alpha}u=f
 \qquad\text{in }\R^d
\end{equation*}
has a unique mild solution $u\in X_{\beta}^{q,\rho}$, and
\begin{equation*}
 \norm{|x|^{-\beta}u}_{L^{q,\rho}}
 \lesssim
 \norm{|x|^\gamma f}_{L^{p,\rho}}.
\end{equation*}
\end{corollary}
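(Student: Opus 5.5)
The plan is to specialize \cref{thm:hardy-schrodinger} to the potential $a(x)=\lambda|x|^{-\alpha}$. The only thing to verify is that the associated function $A$ lies in $L^{t,\infty}$ and that its norm is small once $|\lambda|$ is small. Here $A(x)=|x|^{\beta+\gamma}a(x)=\lambda|x|^{\beta+\gamma-\alpha}=\lambda|x|^{-\delta}$, where $\delta=\alpha-\beta-\gamma>0$ and $t=d/\delta$. Hence $A=\lambda|x|^{-d/t}$.

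The first step is to check that $|x|^{-d/t}\in L^{t,\infty}(\R^d)$. This is the standard computation, already used in the proof of \cref{thm:critical-hardy-lorentz} for $|x|^{-d/q}$. The rearrangement is
$$
(|x|^{-d/t})^*(s)=(s/|B(0,1)|)^{-1/t},
$$
so $\norm{|x|^{-d/t}}_{L^{t,\infty}}=|B(0,1)|^{1/t}=:K_{d,t}<\infty$. By homogeneity of the quasi-norm, $\norm{A}_{L^{t,\infty}}=|\lambda|K_{d,t}$.

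The second step is to fix the constants. Let $C$ and $\varepsilon_0$ be the constants from \cref{thm:hardy-schrodinger}, where $\varepsilon_0$ is chosen so that $C\varepsilon_0<1$. Set $\lambda_0:=\varepsilon_0/K_{d,t}$. Then for $|\lambda|<\lambda_0$ we have $\norm{A}_{L^{t,\infty}}<\varepsilon_0$, and \cref{thm:hardy-schrodinger} yields a unique mild solution $u\in X_\beta^{q,\rho}$ of
$$
u=(-\Delta)^{-\alpha/2}\bigl(\lambda|x|^{-\alpha}u+f\bigr).
$$

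For the estimate, \eqref{eq:hardy-schrodinger-resolvent} gives the constant $C/(1-C|\lambda|K_{d,t})$. This is bounded by $C/(1-C\varepsilon_0)$, uniformly in $|\lambda|<\lambda_0$, which gives the stated $\lesssim$ bound.

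There is no serious obstacle. The only point needing care is that $t=d/\delta$ satisfies $1<t<\infty$, so that $L^{t,\infty}$ carries a norm and O'Neil's inequality (\cref{lem:L}(iv)) applies inside \cref{thm:hardy-schrodinger}. Since $0<\delta\le\alpha<d$, we have $t>1$, and the relation $1/p=1/q+1/t$ forces $t>p>1$.
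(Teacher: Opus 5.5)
Your proposal is correct and follows the paper's proof: compute $A=\lambda|x|^{-\delta}$ with $t=d/\delta$, use $|x|^{-\delta}\in L^{t,\infty}$, and take $|\lambda|$ small enough that \cref{thm:hardy-schrodinger} applies. Your explicit choice $\lambda_0=\varepsilon_0/|B(0,1)|^{1/t}$ and the bound $C/(1-C\varepsilon_0)$, uniform in $|\lambda|<\lambda_0$, spell out details that the paper leaves implicit.
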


\begin{proof}
For $a(x)=\lambda|x|^{-\alpha}$,
 $$
 |x|^{\beta+\gamma}a(x)
 =\lambda|x|^{-(\alpha-\beta-\gamma)}
 =\lambda|x|^{-\delta}.
 $$
Since $t=d/\delta$, then
$
 \norm{|x|^{-\delta}}_{L^{t,\infty}(\R^d)}<\infty.
$
Thus \eqref{eq:small-bridge-potential} holds when $|\lambda|$ is
sufficiently small, this completes the proof.
\end{proof}

\vskip 0.2 true cm
{\bf \color{blue}{Conflict of Interest Statement:}}

\vskip 0.2 true cm

{\bf The authors declare that there is no conflict of interest in relation to this article.}

\vskip 0.2 true cm
{\bf \color{blue}{Data availability statement:}}

\vskip 0.2 true cm

{\bf  Data sharing is not applicable to this article as no data sets are generated
during the current study.}

\vskip 0.2 true cm

\end{document}